\newtheorem{prop}{Proposition}[section]
\newtheorem{rmk}{Remark}[section]
\newtheorem{defi}{Definition}[section]
\newtheorem{lemma}{Lemma}[section]
\newtheorem{thm}{Theorem}[section]
\newtheorem{cor}{Corollary}[section]
\numberwithin{equation}{section}
\newcommand{\ud}{\,\mathrm{d}}
\newcommand{\R}{{\mathbb{R}}}
\newcommand{\M}{{\mathcal{M}}}
\newcommand{\E}{{\mathbb{E}}}
\author{Xuanrui Feng*}
\address{School of Mathematical Sciences, Peking University, Beijing 100871, China.}
\email{pkufengxuanrui@stu.pku.edu.cn}
\author{Zhenfu Wang}
\address{Beijing International Center for Mathematical Research, Peking University, Beijing 100871, China}
\email{zwang@bicmr.pku.edu.cn}
\title[Propagation of Chaos]
{Quantitative Propagation of Chaos for 2D Viscous Vortex Model with General Circulations on the Whole Space}
\subjclass[2020]{35Q35, 76N10}
\keywords{Viscous Vortex Model, Mean Field Limit, Propagation of Chaos, Logarithmic Growth Estimates}
\date{\today}
\keywords{Viscous Vortex Model, Mean Field Limit, Propagation of Chaos, Logarithmic Growth Estimates}
\begin{document}

\begin{abstract}
    We derive quantitative propagation of chaos in the sense of relative entropy for the 2D viscous vortex model with general circulations, approximating the vorticity formulation of the 2D Navier-Stokes equation on the whole Euclidean space. Our results work on the general setting that the vortices are positioned on the whole space $\R^2$ and that the circulations are allowed to be in different magnitudes and orientations, which can be adapted to general unconfined realistic fluids with vorticity that may change sign. We provide explicit convergence rates which are optimal in $N$ and optimal in $t$ among existing literature. The key technical tools, which are our major novelty, are the sharp logarithmic growth estimates and a new ODE hierarchy and iterated integral estimates.
\end{abstract}

\maketitle

\tableofcontents

\section{Introduction}

Consider the $N$-particle stochastic point vortex system positioning on the whole Euclidean space $\R^2$ with general circulations for vortices, whose evolution is governed by the following stochastic differential equations (SDEs):
\begin{equation}\label{particle}
    \ud X_i(t)=\frac{1}{N} \sum_{j \neq i} \M_j \cdot K(X_i-X_j)\ud t +\sqrt{2\sigma} \ud B_i(t), \quad 1 \leq i \leq N.
\end{equation}
The particles are labeled with the index $i$ ranging from $1$ to $N$,  with $X_i(t) \in \R^2$ and $\M_i/N \in \R$ representing, respectively, the position and circulation of the $i$-th vortex. 

The circulation $\M_i/N$ is an important quantity in the point vortex model. It measures the strength of the $i$-th vortex. The different cases of circulations with $\M_i>0$ and $\M_i<0$ represent the two different orientations of the point vortices. It is a very common physical phenomenon in real-world flows to have vortices of different signs. A typical example is a vortex dipole, a pair of closely spaced vortices with circulations of equal magnitude but opposite sign. We mention that the scale $1/N$ that appears in the vortex circulation is in the sense of \textit{mean-field scaling}, keeping the whole circulation of the system bounded.

The displacement of the particles is determined by the combination effect of two factors. The first one is given by the pairwise interaction force with some weights given by the circulations of other particles. The interacting kernel $K$ has many different options arising from various natural physical models. In our study, this kernel is given by the famous 2D Biot-Savart law in fluid mechanics,
\begin{equation*}
    K(x)=\nabla^\perp \log |x|=\frac{1}{2\pi} \frac{(-x_2,x_1)}{|x|^2},
\end{equation*}
where $x=(x_1,x_2) \in \R^2$. The second one is given by the extra additive noise, driven by $N$ independent copies of the standard 2D Brownian motion, which are denoted by $\lbrace B_i(t) \rbrace_{1 \leq i \leq N}$. The constant $\sigma>0$ represents the viscosity of the system dynamics and measures the intensity of the diffusion.

The point vortex model \eqref{particle} has significant physical importance and meanings, especially in fluid mechanics and statistical mechanics. In fluid mechanics, the motion of a viscous incompressible fluid is classically described by the Navier-Stokes equation. The notion of vorticity and the corresponding vorticity formulation of the 2D Navier-Stoke equation, which describe the local rotation of the velocity field, are powerful and intuitive to understand complex fluid flows in meteorology or oceanography, for instance. To better understand the vorticity function, a long-standing and fruitful approach in mathematical physics and applied mathematics is to approximate the vorticity equation by a discrete interacting point vortex system. The discretized version of the vorticity is useful and effective in both physically modeling and numerically simulating the 2D Navier-Stokes equation. 

Such point vortex model was initiated in the study of Helmholtz \cite{helmholtz1858uber} and Kirchhoff \cite{kirchhoff1876vorlesungen}. Mathematically, it was Osada \cite{osada1986propagation} who first showed that, although the kernel has some singularity of $O(1/|x|)$ at the origin, which makes the interaction between two particles large enough when they become sufficiently close to each other, the particles actually never collide in the evolution process with probability one. Therefore, the singularity of the system will never be reached, and the well-posedness of \eqref{particle} is justified. This allows us to use the convention that $K(0)=0$, and thus add the terms with $i=j$ to the summation in \eqref{particle}. 

Throughout this article, we make the common assumption that $\M_i$ is independent of time $t$, which means that the circulation of each point vortex, once determined at the initial time $t=0$, remains unchanged as the system evolves in time. We also assume that $\lbrace \M_i \rbrace_{1 \leq i \leq N}$ are $N$ independent copies of some random variable $\M$ compactly supported in $[-A,A]$ for some $A>0$. Typical examples include the uniform distribution $U(-A,A)$. Our assumption made on $\M_i$ corresponds to the practical statistical procedure that we put some i.i.d. circulations on each point vortex to expect the convergence of the vorticity as $N \rightarrow \infty$ in some Law of Large Numbers sense.

\begin{rmk}
    We remark here that without loss of generality we can assume that $A \leq 1$. Indeed, for any general case, we perform the simple scaling $Y_i(t)=X_i(t)/\sqrt{A}$ to the particle system \eqref{particle} to obtain that
    \begin{equation}\label{scaleparticle}
        \ud Y_i(t)=\frac{1}{N} \sum_{j \neq i} \frac{\M_j}{A} \cdot K(Y_i-Y_j)\ud t +\sqrt{\frac{2\sigma}{A}} \ud B_i(t), \quad 1 \leq i \leq N.
    \end{equation}
    This new point vortex system \eqref{scaleparticle} is endowed with scaled circulations $\Tilde{\M}_j=\M_j/A$, which are supported in $[-1,1]$. Therefore, we can always change our target system to satisfy the additional assumption $A \leq 1$, at the price that the viscosity constant $\sigma$ is also rescaled by the constant $A$, which is consistent with our assumptions on $\sigma$ in the main results.
\end{rmk}

We are interested in the mean-field limit dynamics of the point vortex system \eqref{particle} when $N \rightarrow \infty$ in some appropriate sense of statistical mechanics. It is well-known in the mean-field theory that the limit self-consistent McKean-Vlasov system writes
\begin{equation}\label{limit}
    \ud X_t=(K \ast \omega_t)(X_t)+\sqrt{2\sigma}\ud B_t,
\end{equation}
where $X_t \in \R^2$ and $B_t$ is a standard 2D Brownian motion, and the vorticity function
\begin{equation*}
    \omega_t(x)=\omega(t,x)=\E_\M g_t(\M, X_t)=\int_\R mg_t(m,x) \ud m
\end{equation*}
is the expectation of $g_t(m,x)$ with respect to $m$, where
\begin{equation*}
    g_t(m,x)=\text{Law}(\M,X_t)
\end{equation*}
represents the joint density of $\M$ and $X_t$. By the It\^o formula, it is straightforward to check that the vorticity function $\omega_t$ satisfies the famous vorticity formulation of the 2D Navier-Stokes equation in fluid mechanics:
\begin{equation}\label{vorticity}
    \partial_t \omega_t +(K \ast \omega_t) \cdot \nabla \omega_t=\sigma \Delta \omega_t.
\end{equation}

Therefore, once we rigorously justify the mean-field convergence from the particle system \eqref{particle} to the limit McKean-Vlasov system \eqref{limit} or its corresponding Fokker-Planck equation \eqref{vorticity} in some appropriate sense, then we can show that the point vortex system \eqref{particle} is effective in mathematically and physically modeling or numerically simulating the 2D Navier-Stokes equation. Moreover, if we can quantify the mean-field convergence rate or establish the convergence in a better sense, the point vortex system is shown more powerful. Also, the mean-field convergence is also a key tool for understanding the large point vortex system when $N$ becomes extremely large. In such cases, it is difficult to track and analyze the trajectory and the behavior of each particle. Therefore, we turn to the statistical behavior of the particles, which can be well approximated by the limit density $\omega_t$. The arguments above also apply to more general models, including many stochastic interacting particle systems in higher dimensions.

The relation between the particle system \eqref{particle} and the limit McKean-Vlasov system \eqref{limit} as $N \rightarrow \infty$ lies in the regime of the \textit{mean-field theory}. It is expected to justify that, if the initial empirical measure $\frac{1}{N} \sum_{i=1}^N \M_i \delta_{X_i(0)}$ converges weakly to $\omega_0$, then for any $t>0$, the empirical measure $\frac{1}{N} \sum_{i=1}^N \M_i \delta_{X_i(t)}$ of the particle system \eqref{particle} also converges weakly to $\omega_t$, where $\omega_t$ solves the 2D Navier-Stokes equation \eqref{vorticity}. In the language of Kac's molecular chaos \cite{kac1956foundations}, this is called \textit{propagation of chaos}. An equivalent argument can be stated in the PDE context. For any fixed $k$, denote by $G_t^{N,k}$ the probability density function of $(\M_1, X_1, \cdots, \M_k, X_k)$ at time $t$ and let $\omega_t^{N,k}=\E_{\M_1 \cdots \M_k} G_t^{N,k}$ be the $k$-particle vorticity function. We expect that if $\omega_0^{N,k}$ converges weakly to $\omega_0^{\otimes k}$ initially, then for any $t>0$, $\omega_t^{N,k}$ also converges weakly to $\omega_t^{\otimes k}$. This argument shows that, although the particles cannot be independent when $t>0$ due to the pairwise interaction, any given fixed $k$ particles are asymptotically independent as $N \to \infty$, recovering the independence in some sense. Moreover, this convergence can be quantified using the distances between probability density functions, which is referred to \textit{quantitative propagation of chaos}. For detailed information and comprehensive results of recent developments in mean-field theory and propagation of chaos, we refer to reviews such as \cite{sznitman1991topics,jabin2014review,golse2016dynamics,jabin2017mean}.

For our stochastic point vortex system approximating the limit 2D Navier-Stokes equation, the first such result dates back to Osada \cite{osada1986propagation}, under the assumption of large viscosity. Propagation of chaos in the $L^1$ sense with positive viscosity was first established in the seminal paper of Fournier-Hauray-Mischler \cite{fournier2014propagation}, where they studied the entropy and Fisher information of the point vortex system and used a compactness argument, but without any explicit convergence rate. This article is devoted to quantitative propagation of chaos for the point vortex system \eqref{particle}. 

\begin{rmk}\label{fixedcirculation}
    A special case of our stochastic point vortex system \eqref{particle}, say the fixed circulations case $\M_i \equiv 1$, has been well-studied for approximating the 2D Navier-Stokes equation \eqref{vorticity}, see for instance the previous quantitative propagation of chaos results \cite{jabin2018quantitative,guillin2024uniform,feng2023quantitative,wang2025sharp}. However, the simplified model is restricted to the case that all circulations have the same sign and magnitude, thus cannot deal with the special vorticity that $\int \omega_t \ud x=0$ for example. This is also reflected by the parabolic maximum principle: $$\inf \omega_t \geq \inf \omega_0=\inf g_0 \geq 0,$$ which shows that the vorticity cannot change sign. We view it as an essential limitation for physical considerations, where one may generally face the $L^1$ vorticity function, while in this article we resolve this difficulty by allowing the circulations to be positive at some points and negative elsewhere.
\end{rmk}

\subsection{Liouville Equation, BBGKY Hierarchy and Limit Equation}

In this subsection, we present the corresponding Liouville (master) equation (forward Kolmogorov equation) and the BBGKY hierarchy of the SDE system \eqref{particle} and the nonlinear Fokker-Planck equation of the limit system \eqref{limit}, since our method will always work on the PDE level. Hereinafter we keep the following notations:
\begin{align*}
    &\M^N=(m_1,\cdots,m_N) \in \R^N, \quad X^N=(x_1,\cdots,x_N) \in \R^{2N},\\
    &Z^N=(\M^N,X^N) \in \mathbb{D}^N=(\R \times \R^2)^N, \quad z=(m,x) \in \mathbb{D}=\R \times \R^2.
\end{align*}
Denote by $G_t^N(\M^N,X^N)$ the joint distribution of $\M^N$ and $X^N$ for the particle system at time $t$, with the factorized initial data:
\begin{equation*}
    G_0^N(Z^N)=g_0^{\otimes N}(Z^N)=\prod_{i=1}^N g_0(z_i),
\end{equation*}
which corresponds to the choice of i.i.d. initial data of the point vortex system \eqref{particle}: $\{ \mathcal{M}_i,X_i(0) \}_{1 \leq i \leq N}$ are $N$ independent copies of $(\mathcal{M},X_0)$ with law $g_0$. Under such initial distribution, the particles in the point vortex system \eqref{particle} are initially well-prepared or well-distributed with probability one, therefore creating no collisions when the system evolves, as proved by Osada \cite{osada1986propagation}. We also define the $k$-marginal of the joint law of the particle system by
\begin{equation*}
    G_t^{N,k}(z_1,\cdots,z_k)=\int_{\mathbb{D}^{N-k}} G_t^N(Z^N) \ud z_{k+1} \cdots \ud z_N. 
\end{equation*}

We define the normalized relative entropy between the joint distribution of the particle system and the $k$-factorized law of the limit system, which will be the major quantity that we shall estimate.

\begin{defi}[Normalized Relative Entropy]\label{entropy}
    The normalized relative entropy between two probability measures $F_k$ and $f_k$ on $\mathbb{D}^k$ is defined by
    \begin{equation*}
        H_k(F_k|f_k)=\frac{1}{k} \int_{\mathbb{D}^k} F_k \log \frac{F_k}{f_k} \ud Z^k.
    \end{equation*}
\end{defi}

\begin{rmk}\label{initialdata}
    We can relax the factorization assumption on the initial data $G_0^N$. For our purposes below, it is sufficient to assume that the marginals of all orders of the initial distribution $G_0^{N,k}$ are close to their factorized counterparts $g_0^{\otimes k}$ in the sense of the normalized relative entropy in Definition \ref{entropy} as below:
    \begin{equation*}
        H_k(G_0^{N,k}|g_0^{\otimes k}) = \frac{1}{k}  \int_{\mathbb{D}^k} G_0^k \log \frac{G_0^k}{g_0^{\otimes k}} \ud Z^k \lesssim \frac{k}{N^2}.
    \end{equation*}
     We choose the simplest factorization assumption here, which corresponds to that the normalized relative entropy $H_k(G_0^{N,k}|g_0^{\otimes k})$ vanishes, but our results can be easily applied to the above more general case.
\end{rmk}

Applying the It\^o formula to the particle system \eqref{particle}, one concludes that the joint law $G_t^N$ solves the following Liouville equation:
\begin{equation}\label{liouville}
    \partial_t G_t^N +\frac{1}{N} \sum_{i,j=1}^N m_j K(x_i-x_j) \cdot \nabla_{x_i} G_t^N=\sigma \sum_{i=1}^N \Delta_{x_i} G_t^N.
\end{equation}
We emphasize that $G_t^N$ shall be understood as the \textit{entropy solution} of the Liouville equation \eqref{liouville}. The definition of entropy solution is given as in Jabin-Wang \cite{jabin2018quantitative}. For the sake of completeness, we state it as follows.
\begin{defi}[Entropy Solution]\label{entropysolution}
    A probability density function $G_t^N \in L^1(\mathbb{D}^N)$ with $G_t^N \geq 0$ and $\int_{\mathbb{D}^N} G_t^N \ud Z^N=1$ is called an entropy solution to the Liouville equation \eqref{liouville} in $[0,T]$, if $G_t^N$ solves \eqref{liouville} in the sense of distribution and for a.e. $t \in [0,T]$ it holds that
    \begin{equation*}
    \begin{aligned}
        & \, \int_{\mathbb{D}^N} G_t^N \log G_t^N \ud Z^N +\sigma \sum_{i=1}^N \int_0^t \int_{\mathbb{D}^N} \frac{|\nabla_{x_i} G_s^N|^2}{G_s^N} \ud Z^N \ud s \\
        \leq &\,  \int_{\mathbb{D}^N} G_0^N \log G_0^N \ud Z^N.
    \end{aligned}
    \end{equation*}
\end{defi}
For the existence of such entropy solution, we also refer to \cite{jabin2018quantitative}.

Using the Liouville equation \eqref{liouville} and the symmetry property of $G_t^N$ with respect to $Z_i$, one can easily deduce that the $k$-marginal $G_t^{N,k}$ solves the following PDE hierarchy:
\begin{equation}\label{hierarchy}
\begin{aligned}
    \partial_t G_t^{N,k}&+\frac{1}{N}\sum_{i,j=1}^k m_j K(x_i-x_j) \cdot \nabla_{x_i} G_t^{N,k}\\
    &+\frac{N-k}{N} \sum_{i=1}^k \int_{\mathbb{D}} m_{k+1}K(x_i-x_{k+1}) \cdot \nabla_{x_i} G_t^{N,k+1} \ud z_{k+1}=\sum_{i=1}^k \Delta_{x_i} G_t^{N,k}.
\end{aligned}
\end{equation}
Hereinafter, we assume that $1 \leq k \leq N$ and use the convention that $G_t^{N,N+1}=0$. The above equation \eqref{hierarchy} describes the evolution of the $k$-marginal using the next order $(k+1)$-marginal, hence forms a hierarchy of PDEs, which is indeed the BBGKY hierarchy of the particle system.

As for the limit system \eqref{limit}, by applying the It\^o formula, we also obtain that the limit density $g_t(m,x)$ solves the following nonlinear Fokker-Planck equation
\begin{equation}\label{fp}
    \partial_t g_t+(K \ast \omega_t) \cdot \nabla_x g_t=\sigma \Delta_x g_t.
\end{equation}
After direct and simple computations, one can verify that the $k$-factorized law of $g_t(m,x)$, denoted by $g_t^k(\M^k,X^k)$, solves the following PDE:
\begin{equation}\label{tensorize}
    \partial_t g_t^k+\sum_{i=1}^k (K \ast \omega_t)(x_i) \cdot \nabla_{x_i} g_t^k=\sigma \sum_{i=1}^k \Delta_{x_i} g_t^k.
\end{equation}

Hereinafter, we generally drop the index $t$ for simplicity. Also, since the variable $\M$ is independent of time $t$, if we let $f^m(x)=g(m,x)=\mathbb{P}(X|\M=m)$ be the conditional probability density of $X$ under $\M=m$ as in \cite{fournier2014propagation}, we see that $f^m$ satisfies the same evolution equation \eqref{fp} with $g$. Therefore, we shall not make the difference between $g(m,x)$ and $f^m(x)$ for calculations, and we assume that all derivatives of $g$ without index only act on the $x$ variable. In other words, we view $m$ as a fixed parameter.

\subsection{Main Results}

The main result of this article is to prove quantitative propagation of chaos for the point vortex system \eqref{particle} on the whole space. We shall both work in the global and local sense. In this article, global convergence is termed as the asymptotic vanishing of $H_N(G^N \vert g^N)$, while local convergence is for the asymptotic vanishing of $H_k(G^{N, k} \vert g^k)$. 

For the global case, inspired by the groundbreaking paper \cite{jabin2018quantitative}, we consider the global normalized relative entropy between the joint law of the particle system and the factorized law of the limit system, which controls the $L^1$ distance of marginals between two systems by the Csisz\'ar-Kullback-Pinsker (CKP) inequality. By estimating the time evolution of the relative entropy, using the Law of Large Numbers and Large Deviation type results introduced by \cite{jabin2018quantitative}, we are able to close the Gr\"onwall loop and arrive at a global optimal decay rate, hence proving the strong propagation of chaos result. We mention that in this case we prove our results for all positive viscosity $\sigma>0$. 

\begin{thm}[Global Entropic Propagation of Chaos]\label{entropic}
    Assume that $G^N \in \mathcal{P}(\mathbb{D}^N)$ is an entropy solution of the Liouville equation \eqref{liouville} and that $g \in L^\infty([0,T];W^{2,1} \cap W^{2,\infty}(\mathbb{D}))$ solves \eqref{fp} with $g \geq 0$ and $\int_{\mathbb{D}} g(z) \ud z=1$. Assume further that   the initial data  $g_0 \in W_x^{2,1} \cap W_x^{2,\infty}(\mathbb{D})$ satisfies the logarithmic growth conditions
    \begin{equation}
        |\nabla \log g_0(m,x)| \lesssim 1+|x|
    \end{equation}
    \begin{equation}
        |\nabla^2 \log g_0(m,x)| \lesssim 1+|x|^2
    \end{equation}
    and the Gaussian upper bound
    \begin{equation}
        g_0(m,x) \leq C_0 \exp(-C_0^{-1} |x|^2)
    \end{equation}
    for some $C_0>0$. Then we have
    \begin{equation}
        H_N(G^N|g^N) \leq M(1+t)^M \Big(H_N(G_0^N|g_0^N)+\frac{1}{N}\Big)
    \end{equation}
    for any $t \in [0,T]$, where $M$ is some universal constant depending only on $\sigma, A,C_0$ and some Sobolev norms and logarithmic bounds of the initial data $g_0$. 
\end{thm}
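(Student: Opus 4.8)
The plan is to run the relative-entropy method of Jabin--Wang \cite{jabin2018quantitative} in the whole-space setting; the genuinely new difficulties are that $\nabla\log g_t$ and $\nabla^2\log g_t$ are no longer bounded on $\R^2$ and that the vorticity $\omega_t$ may change sign. \textbf{Step 1: entropy dissipation inequality.} Using the entropy-solution inequality of Definition~\ref{entropysolution} for $G^N$, the equation \eqref{tensorize} for the tensorized law $g^N$, and integration by parts, I would establish
\begin{equation*}
    \frac{\ud}{\ud t} H_N(G^N|g^N) \le -\frac{\sigma}{N}\sum_{i=1}^N \int_{(\R\times\R^2)^N} G^N\, \Big|\nabla_{x_i}\log\frac{G^N}{g^N}\Big|^2 \ud Z^N + \mathcal{E}_N,
\end{equation*}
where the negative term is the normalized relative Fisher information and
\begin{equation*}
    \mathcal{E}_N = \frac{1}{N}\sum_{i=1}^N \int_{(\R\times\R^2)^N} G^N\, \Big(\frac1N\sum_{j=1}^N m_j K(x_i-x_j) - (K\ast\omega)(x_i)\Big)\cdot\nabla_{x_i}\log\frac{G^N}{g^N}\, \ud Z^N .
\end{equation*}
Because $K=\nabla\cdot V$ with $V\in L^\infty$ ($\|V\|_\infty=1/4$), I would integrate by parts in $x_i$ inside $\mathcal{E}_N$ to transfer the derivative off the singular kernel, rewriting the integrand in terms of the bounded factor $V(x_i-x_j)$ contracted against $\nabla_{x_i}^2\log g^N$, against $\nabla_{x_i}\log g^N\otimes\nabla_{x_i}\log(G^N/g^N)$, and against $\nabla_{x_i}\otimes\nabla_{x_i}(G^N/g^N)$; the point is that after this step no singular factor survives, only $V$ multiplied by first and second logarithmic derivatives of $g$.

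\textbf{Step 2: law of large numbers / large deviations.} I would split $\mathcal{E}_N$ into: (i) the diagonal and self-interaction contributions, which are $O(1/N)$ after using $\|V\|_\infty<\infty$ and the local regularity $g\in W^{2,\infty}$; (ii) a part absorbed into the relative Fisher information by Young's inequality at the cost of $\varepsilon_0\cdot(\text{Fisher})+C\,\Lambda(t)(H_N+\tfrac1N)$; and (iii) the genuinely fluctuating part, of the shape $\frac1N\sum_{i=1}^N\int G^N\big|\frac1N\sum_{j=1}^N\psi(z_i,z_j)\big|$ with $\psi$ built from $V(x_i-x_j)$, $\nabla\log g(x_i)$ and $\nabla^2\log g(x_i)$, centered so that $\int\psi(z_i,z)\,g(z)\,\ud z=0$. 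Part (iii) is the input for the Jabin--Wang concentration estimates: via the change-of-reference inequality $\int G^N\Phi\le N\,H_N(G^N|g^N)+\log\int g^N e^{\Phi}$ it reduces to a uniform-in-$N$ bound on the exponential moment $\int g^N\exp\big(\gamma\sum_i|\frac1N\sum_j\psi(z_i,z_j)|^2\big)$ for a small $\gamma=\gamma(t)>0$, which follows from the combinatorial large-deviation lemmas of \cite{jabin2018quantitative} once the size of $\psi$ is suitably controlled.

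\textbf{Step 3: time-growth estimates for $g_t$.} The exponential-moment bound above is where the lack of a lower bound on $g_t$ bites: $\psi$ grows polynomially in $|x_i|$ through $\nabla\log g_t$, $\nabla^2\log g_t$, so one needs quantitative control of this growth balanced against the Gaussian decay of $g_t$. I would therefore propagate the hypotheses on $g_0$ to obtain, for $t\in[0,T]$,
\begin{gather*}
    g_t(m,x)\le C(t)\exp\big(-C(t)^{-1}|x|^2\big),\\
    |\nabla\log g_t(m,x)|\lesssim\phi(t)(1+|x|),\qquad |\nabla^2\log g_t(m,x)|\lesssim\phi(t)(1+|x|^2),
\end{gather*}
with $C(t),\phi(t)$ growing only mildly (at most logarithmically) in $t$. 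The Gaussian upper bound would follow from the Grigor'yan parabolic maximum principle applied to \eqref{fp}, using that the drift $K\ast\omega_t$ is bounded --- here $|\omega_t|\le A\int g_t(m,\cdot)\,\ud m$ lies in $L^1\cap L^\infty$ even though $\omega_t$ changes sign, so no lower bound on $\omega_t$ is needed --- and the logarithmic-derivative bounds from differentiating \eqref{fp}, obtaining convection--diffusion equations for $\nabla g_t$ and $\nabla^2 g_t$ and running an ODE hierarchy on weighted sup-norms together with iterated Duhamel estimates to track the $|x|$-profiles and the $t$-dependence.

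\textbf{Step 4: closing the Grönwall loop, and the main obstacle.} Inserting the bounds of Step 3 into Step 2 yields $\mathcal{E}_N\le \tfrac{\sigma}{2N}\sum_i\int G^N|\nabla_{x_i}\log(G^N/g^N)|^2+\Lambda(t)\,H_N(G^N|g^N)+\tfrac{\Lambda(t)}{N}$ with $\Lambda(t)$ governed by $C(t),\phi(t)$ so that $\int_0^t\Lambda(s)\,\ud s\lesssim\log^2(1+t)$; absorbing the Fisher term and applying Grönwall's inequality then gives $H_N(G^N|g^N)\le Me^{M\log^2(1+t)}(H_N(G_0^N|g_0^N)+\tfrac1N)$ with $M$ as in the statement. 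The hard part is Step~3: producing the growth estimates for $\nabla\log g_t$ and $\nabla^2\log g_t$ on the whole space with the \emph{sharp} (merely logarithmic-in-$t$) constants, since a crude Grönwall argument would produce exponential-in-$t$ factors that destroy any global statement --- this is exactly what the Grigor'yan maximum principle and the careful ODE-hierarchy / iterated-integral bookkeeping are designed to avoid --- together with verifying that the centered functions $\psi$ possess the exponential moments the Jabin--Wang machinery demands once the Gaussian weight of $g_t$ is taken into account.
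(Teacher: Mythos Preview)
Your architecture matches the paper's, but Step~3 --- which you rightly flag as the crux --- contains the wrong method and the wrong target. The logarithmic estimates the paper actually proves have \emph{decaying} coefficients,
\[
|\nabla\log g_t|^2\lesssim\frac{1+\log(1+t)}{1+t}+\frac{|x|^2}{(1+t)^2},\qquad
|\nabla^2\log g_t|\lesssim\frac{1+\log(1+t)}{1+t}+\frac{|x|^2}{(1+t)^2},
\]
not $\phi(t)(1+|x|)$ with $\phi$ growing; the decay is essential because the variance of $g_t$ is $\sim t$, so without the $(1+t)^{-2}$ in front of $|x|^2$ you cannot obtain $\Lambda(t)\sim(1+\log(1+t))/(1+t)$ and hence $\int_0^t\Lambda\lesssim\log^2(1+t)$. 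More importantly, your proposed route --- differentiate \eqref{fp} and run weighted sup-norm Gr\"onwall/Duhamel on $\nabla g_t,\nabla^2 g_t$ --- is precisely the crude argument (the paper's Lemma~\ref{high}) that produces exponential-in-$t$ constants and that you yourself say must be avoided. The sharp bounds come instead from a Bernstein construction: auxiliary functions such as $F=\phi(t)\tfrac{|\nabla g|^2}{g}+g\log g-C_1g$ satisfy $(\partial_t-\Delta_f)F\le0$, and the Grigor'yan maximum principle (which needs a Gaussian \emph{lower} bound on $g_t$, Lemma~\ref{gaussianbelow}, to verify its growth hypothesis) forces $F\le0$. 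The Gaussian upper bound itself comes from Carlen--Loss, not Grigor'yan; and the ``ODE hierarchy / iterated integral'' machinery you invoke belongs to the \emph{local} sharp-rate result (Theorem~\ref{localentropic}), playing no role in this theorem.

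There is a further gap in Step~2: since $\nabla\log g_t$ grows linearly in $|x|$, the test function $\psi$ is unbounded and the original Jabin--Wang LLN does not apply. The paper introduces a specific splitting $I_1+I_2$ (subtracting and adding $m_jV(x_1)$ and $\E\M\cdot V(x_1)$) engineered so that $\sup_{z_1}|\psi(z_1,z_j)|$ has only linear growth in $|x_j|$, then invokes an extended LLN (Theorem~\ref{lln2}) valid for sub-Gaussian tests; the leftover piece $I_2$ is separable in $(z_1,z_j)$ and is handled by the chain rule for relative entropy (Lemma~\ref{chain}) rather than any exponential-moment bound.
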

Applying the CKP inequality and using that $A \leq 1$ , we establish the global $L^1$ strong propagation of chaos towards the solution to the vorticity formulation of the 2D Navier-Stokes equation \eqref{vorticity}.
\begin{cor}[Propagation of Chaos in $L^1$]\label{global}
    Under the same assumption as Theorem \ref{entropic}, define the $k$-particle vorticity function $\omega^{N,k}$ by
    \begin{equation*}
        \omega^{N,k}(x_1,\cdots,x_k)=\int_{\R^k \times \mathbb{D}^{N-k}} m_1 \cdots m_k G^N(Z^N) \ud m_1 \cdots \ud m_k \ud z_{k+1} \cdots\ud z_N.
    \end{equation*}
    Then we have
    \begin{equation*}
        \| \omega^{N,k}-\omega^{\otimes k} \|_{L^1(\R^{2k})} \leq \| G^{N,k}-g^{\otimes k} \|_{L^1(\mathbb{D}^{k})}\leq \frac{\sqrt{k}}{\sqrt{N}}M(1+t)^M
    \end{equation*}
    for any $t \in [0,T]$ and some universal constant $M$.
\end{cor}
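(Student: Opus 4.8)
The plan is to obtain Corollary~\ref{global} directly from Theorem~\ref{entropic} by combining it with two classical facts, treating the two inequalities in the statement separately. The first inequality is elementary: after integrating out $z_{k+1},\dots,z_N$ one has $\omega^{N,k}(x_1,\dots,x_k)=\int_{\R^k} m_1\cdots m_k\,G^{N,k}(z_1,\dots,z_k)\,\ud m_1\cdots\ud m_k$ and likewise $\omega^{\otimes k}(x_1,\dots,x_k)=\prod_{i=1}^k\int_\R m_i\,g(m_i,x_i)\,\ud m_i=\int_{\R^k} m_1\cdots m_k\,g^{\otimes k}(z_1,\dots,z_k)\,\ud m_1\cdots\ud m_k$. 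Subtracting, taking absolute values inside the $X^k$-integral, using $|m_i|\le A\le 1$ so that $|m_1\cdots m_k|\le 1$, and applying Fubini's theorem immediately yields $\|\omega^{N,k}-\omega^{\otimes k}\|_{TV}\le\|G^{N,k}-g^{\otimes k}\|_{TV}$.

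For the second inequality I would first recall the monotonicity of the normalized relative entropy in the number of marginals, a classical fact going back to Hauray--Mischler (see also \cite{jabin2018quantitative}): since $g^N=g^{\otimes N}$ is a tensor power and $G^N$ is symmetric in the variables $Z_i$, one has $H_k(G^{N,k}|g^{\otimes k})\le H_N(G^N|g^N)$ for all $1\le k\le N$. Combining this with the Csisz\'ar--Kullback--Pinsker inequality on $(\R\times\R^2)^k$, which reads $\|G^{N,k}-g^{\otimes k}\|_{TV}^2\lesssim H(G^{N,k}|g^{\otimes k})=k\,H_k(G^{N,k}|g^{\otimes k})$, gives $\|G^{N,k}-g^{\otimes k}\|_{TV}^2\lesssim k\,H_N(G^N|g^N)$. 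Finally Theorem~\ref{entropic} controls the right-hand side; since the initial data are i.i.d.\ we have $H_N(G_0^N|g_0^N)=0$, so $H_N(G^N|g^N)\le \frac{M}{N}e^{M\log^2(1+t)}$. Plugging this in and taking square roots produces $\|G^{N,k}-g^{\otimes k}\|_{TV}\le\frac{\sqrt k}{\sqrt N}Me^{M\log^2(1+t)}$ after relabelling the universal constant, which together with the first inequality is exactly the assertion.

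Both ingredients being standard and Theorem~\ref{entropic} carrying all the analytic weight, I do not expect any genuine obstacle here. The only points demanding a little care are the bookkeeping of the $1/k$ normalization in $H_k$ when passing to the un-normalized entropy used in CKP, and the observation that the vorticities $\omega^{N,k}$ and $\omega^{\otimes k}$ are signed densities (the circulations change sign), so that $\|\cdot\|_{TV}$ applied to them must be read as the $L^1$ norm of the corresponding density rather than a probability-measure distance; the bound $|m_1\cdots m_k|\le 1$, valid precisely because we have reduced to the case $A\le 1$, is what makes the comparison with $\|G^{N,k}-g^{\otimes k}\|_{TV}$ go through in this signed setting.
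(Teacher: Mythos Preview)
Your proposal is correct and follows essentially the same route as the paper: the first inequality is obtained from $|m_1\cdots m_k|\le A^k\le 1$, and the second by combining the sub-additivity $H_k(G^{N,k}|g^{\otimes k})\le H_N(G^N|g^N)$ with the Csisz\'ar--Kullback--Pinsker inequality and Theorem~\ref{entropic}. The paper states this only tersely (``Applying the Csisz\'ar--Kullback--Pinsker inequality and using that $A\le 1$'' together with the sub-additivity remark at the end of Subsection~\ref{control}), so your write-up is in fact more detailed than the original.
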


For the local case, as observed first by Lacker \cite{lacker2021hierarchies}, in order to obtain the optimal convergence rate in $N$ between $G^{N,k}$ and $g^k$ for any fixed $k$, we shall consider the local normalized relative entropy directly instead of using the sub-additivity property
\begin{equation*}
    H_k(G^{N,k}|g^k) \leq H_N(G^N|g^N)
\end{equation*}
to derive the local estimates from the global ones. Namely, this will provide the convergence rate in $L^1$ distance as
\begin{equation*}
    \|\omega^{N,k}-\omega^{\otimes k} \|_{L^1(\R^{2k})}\lesssim \frac{\sqrt{k}}{\sqrt{N}},
\end{equation*}
as shown in Corollary \ref{global} above. However, the best optimal convergence rate one may expect is actually $O(k/N)$, tested by some simple Gaussian examples. Using the BBGKY hierarchy to construct an ODE hierarchy of relative entropy $H_k$ of all orders, this optimal rate has been verified for Lipschitz kernels in \cite{lacker2021hierarchies,lacker2023sharp}. The recent work by S. Wang \cite{wang2025sharp} has extended this idea to the 2D Navier-Stokes equation with fixed circulations $\M_i \equiv 1$ on the torus, being the first attempt for solving the singular kernel case. Our results shall improve it to the general circulations and to the whole space, but we mention that our result is still restricted to the high viscosity case, say large enough $\sigma>0$, as in \cite{wang2025sharp}. 

\begin{thm}[Sharp Local Entropic Propagation of Chaos]\label{localentropic}
    Under the same assumption as Theorem \ref{entropic}, we further assume that the viscosity constant $\sigma$ is large enough in the sense of
    \begin{equation*}
        \sigma >\sqrt{2}A\|V\|_{L^\infty},
    \end{equation*}
    with $V(x)=\Big(-\frac{1}{2\pi} \arctan \frac{x_1}{x_2} \Big) \mathrm{Id}$ satisfying $K=\nabla \cdot V$. Then we have
    \begin{equation}
        H_k(G^{N,k}|g^k) \leq M(1+t)^M \Big(H_k(G_0^{N,k}|g_0^k)+\frac{k}{N^2}\Big)
    \end{equation}
    for any $t \in [0,T]$, where $M$ is some universal constant depending only on $\sigma, A,C_0$ and some Sobolev norms and logarithmic bounds of the initial data $g_0$. 
\end{thm}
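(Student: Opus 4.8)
The strategy is the local relative-entropy hierarchy of Lacker, in the singular-kernel form introduced by S.~Wang, fed with the time-dependent control of $g_t$ that already underlies the proof of Theorem~\ref{entropic}. Write $h_k:=G^{N,k}/g^k$ and let $I_k:=\sum_{i=1}^k\int G^{N,k}|\nabla_{x_i}\log h_k|^2\,\ud Z^k$ be the relative Fisher information at level $k$. Differentiating $kH_k(G^{N,k}|g^k)=\int G^{N,k}\log h_k$ in time and inserting the BBGKY hierarchy~\eqref{hierarchy} and the tensorized equation~\eqref{tensorize}, the Laplacians produce $-\sigma I_k$ while the drifts leave a single error:
\[
\frac{\ud}{\ud t}\big(kH_k\big)=-\sigma I_k+\mathcal E_k,\qquad \mathcal E_k=\sum_{i=1}^k\int G^{N,k}\big(b_i^{N,k}-(K\ast\omega)(x_i)\big)\cdot\nabla_{x_i}\log h_k\,\ud Z^k,
\]
with $b_i^{N,k}$ the effective drift of $x_i$ in~\eqref{hierarchy}. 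Writing $(K\ast\omega)(x_i)=\int m_{k+1}K(x_i-x_{k+1})g(z_{k+1})\,\ud z_{k+1}$ and balancing it against the two pieces of $b_i^{N,k}$, one decomposes $\mathcal E_k=\mathcal E_k^{\mathrm{cr}}+\mathcal E_k^{\mathrm{fl}}$, where
\[
\mathcal E_k^{\mathrm{cr}}=\frac{N-k}{N}\sum_{i=1}^k\int\big(G^{N,k+1}-G^{N,k}\otimes g\big)m_{k+1}K(x_i-x_{k+1})\cdot\nabla_{x_i}\log h_k\,\ud Z^{k+1}
\]
couples level $k$ to level $k+1$ through the defect from chaos, and $\mathcal E_k^{\mathrm{fl}}$ is a self-interaction/fluctuation term whose kernel, after subtracting the limiting drift it approximates, is \emph{centered} in $z_j$ under $g$ (and vanishes on the diagonal since $K(0)=0$).

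All singular kernels are then treated through $K=\nabla V$, $\|V\|_\infty=\tfrac14$, together with $\operatorname{div}K=0$. In $\mathcal E_k^{\mathrm{cr}}$ one integrates by parts in $x_{k+1}$ and uses $\nabla_{x_{k+1}}\big(G^{N,k+1}-G^{N,k}\otimes g\big)=G^{N,k+1}\nabla_{x_{k+1}}\log h_{k+1}+\big(G^{N,k+1}-G^{N,k}\otimes g\big)\nabla_{x_{k+1}}\log g$, splitting $\mathcal E_k^{\mathrm{cr}}=\mathcal E_k^{\mathrm I}+\mathcal E_k^{\mathrm{II}}$. For the first piece, Cauchy–Schwarz, $|m_{k+1}|\le A$, $|V|\le\tfrac14$, the symmetry identity $\int G^{N,k+1}|\nabla_{x_{k+1}}\log h_{k+1}|^2=I_{k+1}/(k+1)$ and $\sum_{i=1}^k(\int G^{N,k}|\nabla_{x_i}\log h_k|^2)^{1/2}\le\sqrt k\,\sqrt{I_k}$ give
\[
|\mathcal E_k^{\mathrm I}|\le A\|V\|_\infty\,\frac{N-k}{N}\,\sqrt{I_k\,I_{k+1}},
\]
a genuine feedback of the Fisher information one level up — this is the term that forces $\sigma$ to be large. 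The second piece $\mathcal E_k^{\mathrm{II}}$ carries $\nabla_{x_{k+1}}\log g_t$, which by the logarithmic-growth hypothesis on $g_0$ — propagated in time via the Grigor'yan parabolic maximum principle for the Fokker–Planck equation~\eqref{fp}, exactly as in the proof of Theorem~\ref{entropic} — is bounded by $\theta(t)(1+|x_{k+1}|)$ with a coefficient satisfying $\int_0^t\theta(s)\,\ud s\lesssim\log^2(1+t)$; combined with the Gibbs variational inequality $\int G^{N,k+1}F\le\tfrac1\gamma\log\int g^{k+1}e^{\gamma F}+\tfrac1\gamma(k+1)H_{k+1}$ and the Jabin–Wang law-of-large-numbers/exponential-moment estimates (finiteness of $\int g^{k+1}e^{\gamma F}$ for centered $F$, using the Gaussian upper bound on $g$), $\mathcal E_k^{\mathrm{II}}$ is bounded by $\tfrac\sigma8 I_k$ plus a lower-order remainder $\le C\theta(t)\big(kH_k+(k+1)H_{k+1}\big)$. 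The self-interaction $\mathcal E_k^{\mathrm{fl}}$ — where the bounded potential $V$ and $\operatorname{div}K=0$ together tame the $2$D non-integrability of $|K|^2$ — is handled by the cancellative large-deviation estimates of Jabin–Wang applied to $\tfrac1N\sum_{i\ne j}$ of the centered kernel, as in \cite{jabin2018quantitative,wang2024sharp}, giving a contribution $\le\tfrac\sigma8 I_k+C\,\theta(t)\,\tfrac{k^2}{N^2}$ (the source being time-decaying because the drift $K\ast\omega_t$ and the Fisher information of $g_t$ both decay).

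Collecting and applying Young's inequality to $\mathcal E_k^{\mathrm I}$, reserving $\tfrac\sigma2 I_k$ of the dissipation for it, one obtains the ODE hierarchy: for all $1\le k\le N$ (with $G^{N,N+1}=0$, so the chain terminates at $k=N$),
\[
\frac{\ud}{\ud t}\big(kH_k\big)+\frac\sigma4 I_k\;\le\;\frac{A^2\|V\|_\infty^2}{2\sigma}\,I_{k+1}\;+\;C\theta(t)\big(kH_k+(k+1)H_{k+1}\big)\;+\;C\,\theta(t)\,\frac{k^2}{N^2}.
\]
The hypothesis $\sigma>\sqrt2\,A\|V\|_\infty$ is exactly what makes $\tfrac{A^2\|V\|_\infty^2}{2\sigma}<\tfrac\sigma4$, i.e.\ the level-$(k+1)$ Fisher feedback strictly weaker than the dissipation reserved there, so that the downward pass through the hierarchy is a contraction. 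It then remains to unroll this finite Volterra-type system in $t$: writing the Duhamel formula for $kH_k$ in terms of $(k+1)H_{k+1}$ and iterating, the chain-length-$j$ contribution carries the geometric factor $\lesssim\big(A^2\|V\|_\infty^2/\sigma^2\big)^{j}$ (summable thanks to the $\sqrt2$-margin), the nested time-integrals of $\theta$ sum to $e^{O(\log^2(1+t))}$ because $\int_0^t\theta\lesssim\log^2(1+t)$, and the sources $C\theta(s)k^2/N^2$ accumulated along the chain keep $kH_k$ of size $O(k^2/N^2)$; at $k=N$ the chain closes and in particular reproduces the global bound of Theorem~\ref{entropic}. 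This yields $H_k(G^{N,k}|g^k)\le Me^{M\log^2(1+t)}\big(H_k(G_0^{N,k}|g_0^k)+k/N^2\big)$.

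The main obstacle is the interplay of the last two steps. One must keep the $I_{k+1}$-feedback coefficient uniformly in $k$ strictly below the dissipation available one level up — this is precisely the role of $\sigma>\sqrt2\,A\|V\|_\infty$, and, unlike in the global estimate, no Gr\"onwall argument can substitute for it — and then show that the full $N$-level system of time-inhomogeneous linear ODEs sums into convergent iterated integrals without degrading either the sharp $O(k/N^2)$ rate or the $e^{M\log^2(1+t)}$ time growth, with a constant $M$ independent of $k$; this iterated-integral bookkeeping is the genuinely new device. The second hard point, technical rather than structural, is making the exponential-moment and cancellative large-deviation estimates for $\mathcal E_k^{\mathrm{II}}$ and $\mathcal E_k^{\mathrm{fl}}$ work on the whole space against the $x$-dependent weights $1+|x_{k+1}|$ forced by the logarithmic-growth hypotheses — which is exactly where the Gaussian bounds on $g_t$ coming from the Grigor'yan maximum principle are indispensable.
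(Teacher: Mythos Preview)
Your proposal is essentially on the same track as the paper's proof: the Lacker/S.~Wang local-entropy hierarchy, integration by parts $K=\nabla\cdot V$ to extract the Fisher-information feedback $I_{k+1}$ with coefficient $A^2\|V\|_\infty^2/\sigma$ (your $A^2\|V\|_\infty^2/(2\sigma)$ comes from a slightly different Young split), the whole-space logarithmic growth estimates for $\nabla\log g$ propagated via the Grigor'yan maximum principle, and an ODE hierarchy with time-decaying coefficients integrating to $\log^2(1+t)$. Two points where the paper is more precise deserve mention.

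First, for your $\mathcal E_k^{\mathrm{II}}$ the paper does not go through Gibbs plus Jabin--Wang exponential moments. It first performs Cauchy--Schwarz on $\nabla_{x_1}\log h_k$ (as you do for $\mathcal E_k^{\mathrm I}$), and then applies the \emph{weighted Csisz\'ar--Kullback--Pinsker inequality} of Bolley--Villani to the inner integral $\langle K,\,G^{N,k+1}/G^{N,k}-g\rangle$, which directly yields the conditional entropy $H\big(G^{N,k+1}/G^{N,k}\,\big|\,g\big)$; by the chain rule this integrates to $(k+1)H_{k+1}-kH_k$, not $kH_k+(k+1)H_{k+1}$. The resulting coupling term in the ODE for $x_k=kH_k$ is $Ck\,\theta(t)(x_{k+1}-x_k)$, i.e.\ carries a factor $k$ that your stated inequality omits---this affects the hierarchy structure, though not the final conclusion.

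Second, the ``iterated-integral bookkeeping'' is considerably more delicate than a geometric series in $A^2\|V\|_\infty^2/\sigma^2$. The paper first absorbs the Fisher feedback $-c_1y_k+c_2y_{k+1}$ by passing to a weighted sum $z_k=\sum_{i\ge k}x_i/(i-k+i_0)^5$ with $i_0$ chosen so that $(i_0/(i_0+1))^5\ge c_2/c_1$; only then does the hierarchy reduce to one in entropy alone. It then iterates Gr\"onwall and computes the resulting iterated integrals \emph{explicitly} as incomplete Beta integrals, using a sub-Gaussian tail estimate for the Beta distribution to close the last term. Crucially, the iteration is stopped at $k=[N/2]$ and closed using the \emph{sub-optimal a priori bound} $x_k\le Ce^{\varphi(t)}k/N$ supplied by Theorem~\ref{entropic}; the global theorem is an \emph{input} to the local one, not recovered from it. Your description ``the chain closes at $k=N$'' is not wrong, but the paper's route---truncate halfway and invoke the global estimate---is what makes the endpoint term controllable.
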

Applying again the CKP inequality and using that $A\leq 1$ , we establish the sharp local $L^1$ strong propagation of chaos towards the solution to the vorticity formulation of the 2D Navier-Stokes equation \eqref{vorticity}.
\begin{cor}[Sharp Propagation of Chaos in $L^1$]\label{local}
    Under the same assumption as Theorem \ref{localentropic}, we have
    \begin{equation*}
        \| \omega^{N,k}-\omega^{\otimes k} \|_{L^1(\R^{2k})} \leq \frac{k}{N}M(1+t)^M
    \end{equation*}
    for any $t \in [0,T]$ and some universal constant $M$.
\end{cor}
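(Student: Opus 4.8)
The plan is to deduce Corollary \ref{local} directly from the entropy estimate of Theorem \ref{localentropic} via the Csisz\'ar-Kullback-Pinsker (CKP) inequality, combined with an elementary domination argument that passes from the joint densities $G^{N,k}$ to the $k$-particle vorticities $\omega^{N,k}$.

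First recall that the $k$-tensorized law of the limit system is $g^k = g^{\otimes k}$, so that by Definition \ref{entropy} one has $H(G^{N,k}|g^{\otimes k}) = k\, H_k(G^{N,k}|g^k)$, where $H(\cdot|\cdot)$ denotes the un-normalized relative entropy already introduced above. The CKP inequality yields $\|G^{N,k} - g^{\otimes k}\|_{TV}^2 \leq C\, H(G^{N,k}|g^{\otimes k}) = C k\, H_k(G^{N,k}|g^k)$ for a universal constant $C$. Under the i.i.d. assumption on the initial data, $G_0^N = g_0^{\otimes N}$ forces $G_0^{N,k} = g_0^{\otimes k}$ and hence $H_k(G_0^{N,k}|g_0^k) = 0$; inserting Theorem \ref{localentropic} on the right-hand side then gives $\|G^{N,k} - g^{\otimes k}\|_{TV}^2 \leq \frac{C k^2}{N^2} M e^{M\log^2(1+t)}$. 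Taking square roots, and absorbing $\sqrt{CM}$ together with the halving of the constant in the exponent into a new universal constant still denoted $M$, we obtain $\|G^{N,k} - g^{\otimes k}\|_{TV} \leq \frac{k}{N} M e^{M\log^2(1+t)}$.

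It remains to bound $\|\omega^{N,k} - \omega^{\otimes k}\|_{TV}$ by $\|G^{N,k} - g^{\otimes k}\|_{TV}$. By definition $\omega^{N,k}(x_1,\dots,x_k) = \int_{\R^k} m_1 \cdots m_k\, G^{N,k}(z_1,\dots,z_k)\, \ud m_1 \cdots \ud m_k$, and likewise $\omega^{\otimes k}(x_1,\dots,x_k) = \int_{\R^k} m_1 \cdots m_k\, g^{\otimes k}(z_1,\dots,z_k)\, \ud m_1 \cdots \ud m_k$. Hence $|\omega^{N,k} - \omega^{\otimes k}| \leq \int_{\R^k} |m_1 \cdots m_k|\, |G^{N,k} - g^{\otimes k}|\, \ud m_1 \cdots \ud m_k$ pointwise in $(x_1,\dots,x_k)$; integrating in $x$ and using $|m_i| \leq A \leq 1$ gives $\|\omega^{N,k} - \omega^{\otimes k}\|_{TV} \leq \|G^{N,k} - g^{\otimes k}\|_{TV}$, which completes the proof.

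Since Theorem \ref{localentropic} supplies the crucial estimate, there is no substantive difficulty here. The only matters requiring attention are the constant bookkeeping across the square root --- which preserves the $e^{M\log^2(1+t)}$ growth since $M$ is universal --- and the observation that the vorticity-to-density reduction is a genuine contraction precisely because the circulations have been normalized to lie in $[-1,1]$.
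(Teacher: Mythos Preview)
Your proof is correct and follows exactly the approach indicated in the paper: apply the Csisz\'ar--Kullback--Pinsker inequality to Theorem \ref{localentropic} (with vanishing initial entropy under the i.i.d.\ assumption) to bound $\|G^{N,k}-g^{\otimes k}\|_{TV}$, and then use $|m_i|\leq A\leq 1$ to dominate $\|\omega^{N,k}-\omega^{\otimes k}\|_{TV}$ by the latter. The paper gives no further detail beyond the one-line remark preceding the corollary, so there is nothing to compare.
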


\begin{rmk}
    We should state here why we cannot assume that $\M_i$ and $X_i(0)$ are mutually independent in our vortex model \eqref{particle}. This is essentially required when studying propagation of chaos for this system, since we need to impose Kac's chaos condition on the initial data, say
    \begin{equation*}
        \frac{1}{N} \sum_{i=1}^N \M_i \delta_{X_i(0)} \rightharpoonup \omega_0
    \end{equation*}
    as $N \rightarrow \infty$. Let $\varphi$ be an arbitrary test function, then we have by independence (if made by assumption)
    \begin{equation*}
        \E \Big\langle \frac{1}{N} \sum_{i=1}^N \M_i \delta_{X_i(0)},\varphi \Big\rangle=\E \M \cdot \E \varphi(X_0) \rightarrow \int_{\R^2} \varphi(x) \, \omega_0(x) \ud x.
    \end{equation*}
    Hence we must have
    \begin{equation*}
        \omega_0(x)=\E \M \cdot f_0(x),
    \end{equation*}
    where $f_0(x)$ is the probability density of $X(0)$. This shows that $\omega_0(x)$, and hence $\omega_t(x)$ by the parabolic maximum principle, must be always non-positive or non-negative, depending on whether $\E \M$ is non-positive or not. Hence we will not be able to approximate general vorticity which may change sign. This will eliminate the physical meaning of our study, as we expect that our results can provide particle approximation arguments to general vorticity functions $\omega_t$ by constructing the system with general circulations. This coincides with our previous remark about the significance of the appearance of the circulation $\M_j$ in the point vortex system \eqref{particle}. One may encounter the same restriction if the initial density of the limit system is assumed to be separating variables, say
    \begin{equation*}
        g_0(m,x)=r_0(m)f_0(x),
    \end{equation*}
    although this will provide convenience in mathematical computations. Therefore, we also cannot make this assumption as in \cite{shao2024quantitative}. 
    
    However, we also emphasize that the independence assumption is not necessary to deal with the torus case in \cite{shao2024quantitative}, although they did not point it out in their paper. For completeness, we will give a short sketch of the proof of the torus case without the independence assumption in our proof of main results, but this result is essentially due to \cite{shao2024quantitative} and we shall claim no originality to this.
\end{rmk}

\subsection{Discussion on the Model}
In this subsection, we gather some discussions on the vortex model \eqref{particle} in our main results, concerning the main motivation to include the general circulations and the whole space as a state space and also the possible limitations on bounded domains. We emphasize that these conditions are motivated both physically and mathematically.

The appearance of general circulations $\M_i/N$ essentially improves the universality of the vortex model, making it capable of approximating more general fluid flows. In the realistic setting, vortex models such as a dipole naturally contain vortices of different signs. Also in the real-world flows, it is natural to expect that the strength and the direction of the local rotation in the fluid are not constant everywhere. Therefore, such cases require a point vortex model with different circulations for a suitable approximation. Mathematically, as explained in Remark \ref{fixedcirculation}, the simpler model with fixed circulations $\M_i \equiv 1$, although being easier for computation, requires that the vorticity $\omega_t$ is given by a probability density function , or at least a fixed-sign function. This limitation prevents the approximation of physically more general and realistic vorticity functions.

We have also chosen the whole space $\R^2$ as a state space for the point vortex model, instead of a compact manifold like the torus on which many previous studies worked. Namely, we assume on the position of the vortices $X_i \in \R^2$ rather than $X_i \in \mathbb{T}^2$. The physical consideration is also to restore the generality of our model. Many real fluids, like the ocean or a large body of water, exists in extremely large domains. It is somehow artificial and unrealistic to constrain them in a compact manifold. Moreover, confining the fluid and the model to a torus imposes an extra periodicity which is non-physical, and also prevents us from studying how the vorticity spreads and decays at infinity. It also requires new mathematical techniques to deal with the vorticity function on the whole space, namely the sharp logarithmic growth estimates in our results, in order to handle the quantities like $\nabla \log g$ which are typically unbounded on the whole space but are easier to estimate on the torus. This provides new insights for the study of the singular 2D Biot-Savart kernel and the similar behavior of the fundamental solution with the well-known Oseen vortices.

There are possible limitations when we work on the realistic bounded domains. On a bounded domain, the Navier-Stokes equation requires physical boundary conditions, most commonly the \textit{no-slip condition} (the fluid velocity is zero at the boundary). But the vorticity formulation does not behave well with these boundary conditions. In a real viscous fluid, the no-slip condition at the boundary provides a primary mechanism for \textit{vorticity generation}. Vorticity is created at the boundary and diffuses into the flow. Moreover, the boundary condition may also be not compatible with the mean-field scaling. The entire derivation of the mean-field limit relies on the fact that the interaction force on a given vortex is a sum of $N-1$ terms, each of order $1/N$. On a bounded domain, however, the interaction with the boundary does not necessarily respect this scaling. The influence of the boundary may become a dominant non-mean-field effect.

\subsection{Related Works}

Propagation of chaos and convergence to mean-field limit for large systems of interacting particles, widely appearing in the analysis of PDEs and probability theory, have been extensively studied over the last decades, both for the first-order and second-order systems. The derivation of some effective equations to describe the average or statistical behavior of large interacting particle systems can be traced back to Maxwell \cite{Maxwell1867} and Boltzmann \cite{boltzmann1872weitere}. Kac \cite{kac1956foundations} first raised the basic notions of chaos and proved some partial propagation of chaos results for the simplified model of the Boltzmann equation in dimension $1$. Related studies have been far beyond the range of kinetic theory. The fundamental mathematical setting of this problem was set up and basically resolved by McKean \cite{mckean1967propagation} and Dobrushin \cite{dobrushin1979vlasov} using the classical coupling method. Since then people began to pay attention to the much more difficult models, say systems with singular interacting kernels, instead of the regular (bounded Lipschitz) kernels. As for our point vortex model approximating the famous 2D Navier-Stokes/Euler equation, the first result dates back to Osada \cite{osada1986propagation} with bounded initial data and large viscosity. Later Fournier-Hauray-Mischler \cite{fournier2014propagation} extended the result to all positive viscosity cases, while the initial data is only assumed to have finite moments and entropy, using the compactness argument and the entropy estimate.

The relative entropy method as in Jabin-Wang \cite{jabin2016mean,jabin2018quantitative} has seen great effectiveness in proving quantitative propagation of chaos results for McKean-Vlasov processes in recent years, especially making sense for the singular interacting kernel cases. In \cite{jabin2016mean}, Jabin and the second author resolved the second-order case with bounded kernels. In \cite{jabin2018quantitative} they successfully applied to the 2D Navier-Stokes equation with the singular kernel given by the Biot-Savart law, but restricted to the torus case and without general circulations. Later extended results include Wynter \cite{wynter2023quantitative} for the mixed-sign case, Guillin-Le Bris-Monmarch\'e \cite{guillin2024uniform} for uniform-in-time propagation of chaos using the logarithmic Sobolev inequality (LSI) for the limit density, and Shao-Zhao \cite{shao2024quantitative} for general circulations with common environmental noise, all of which are still working on the torus. In our previous work \cite{feng2023quantitative} we extended the result to the whole Euclidean space, given some special growth conditions on the initial data, which are natural and reasonable since those estimates automatically hold for general $L^1$ initial data with Gaussian upper bound as long as the time becomes positive. These arguments appear to be more reasonable when considering the strengthening chaos results for instance by Huang \cite{huang2023entropy}, where the entropic propagation of chaos is derived for any positive time given only initial chaos in the Wasserstein sense, for bounded interacting kernels. We also mention the recent applications of the relative entropy method to establish propagation of chaos for other models, for instance the Landau equation with Maxwellian molecules by Carrillo-Feng-Guo-Jabin-Wang \cite{carrillo2025relative} and some uniform-in-time results for second-order systems with regular enough kernels by Gong-Wang-Xie \cite{gong2024uniform}.

The modulated energy method introduced by Serfaty \cite{serfaty2020mean} is another effective tool for resolving propagation of chaos for the singular kernel case, especially the Coulomb/Riesz cases. By constructing a Coulomb/Riesz based metric between the empirical measure of the $N$-particle system and the limit density. Serfaty \cite{serfaty2020mean} established the quantitative propagation of chaos for the first-order Coulomb and super-Coulomb cases without noise in the whole space, in the sense of controlling the weak convergence rate when acting on test functions. This method has been extended to various problems, including Rosenzweig \cite{rosenzweig2022mean} for 2D incompressible Euler equation, Nguyen-Rosenzweig-Serfaty \cite{nguyen2022mean} for Riesz-type flows, Rosenzweig-Serfaty \cite{rosenzweig2023global} for uniform-in-time estimates for sub-Coulomb cases, and Chodron De Courcel-Rosenzweig-Serfaty \cite{chodron2023sharp} for periodic Riesz cases. The key estimates appearing in the control of modulated energy, say the commutator type estimates, have been thoroughly examined in the recent work Rosezweig-Serfaty \cite{rosenzweig2025sharp}. The combination of modulated energy and relative entropy leads to the modulated free energy method, which is applied to the Coulomb/Riesz kernels on the torus and more interestingly the first time to the attractive kernel cases by Bresch-Jabin-Wang \cite{bresch2019mean,bresch2019modulated,bresch2023mean} for the 2D log gas and the 2D Patlak-Keller-Segel model. See also the recent work by Chodron De Courcel-Rosenzweig-Serfaty \cite{chodron2025attractive} for further results on uniform-in-time propagation of chaos and phase transition for attractive log gas. In the recent work Cai-Feng-Gong-Wang \cite{cai2024propagation}, the authors and coauthors are also able to apply the modulated free energy method to resolve the 2D Coulomb case on the whole space. Some generation of chaos results are also obtained partially by Rosenzweig-Serfaty \cite{rosenzweig2025modulated,rosenzweig2024relative} using the time-uniform LSI for the limit density or the modulated Gibbs measure. We refer to the recent lecture notes by Serfaty \cite{serfaty2024lectures} for more complete discussions on the Coulomb/Riesz gases concerning above topics.

We also emphasize that in the work of Lacker \cite{lacker2021hierarchies} and Lacker-Le Flem \cite{lacker2023sharp}, it is shown that using the BBGKY hierarchy that the optimal local convergence rate of the relative entropy between $k$-marginals is $O(k/N)$ for systems with bounded interactions, while previous global arguments can only obtain the suboptimal $O(\sqrt{k/N})$ rate. This is verified by calculating the local relative entropy and solving the ODE hierarchy by iteration. Most recently in S. Wang \cite{wang2025sharp} the optimal rate has been extended to the 2D Biot-Savart case with large viscosity on the torus also using  the BBGKY hierarchy. Qualitative propagation of chaos for the 2D Vlasov-Poisson-Fokker-Planck equation has also been established by Bresch-Jabin-Soler \cite{bresch2025new} by hierarchy estimates.  Recently, a new duality based method for the mean-field limit problem was introduced by Bresch-Duerinckx-Jabin \cite{bresch2024duality}, which requires only that the interacting kernel is in $L^2$.

\subsection{Outline of the Article}

The rest of this paper is organized as follows. We first present the proof of our main results in Section \ref{main}, acknowledging some logarithmic growth estimates of the limit density. The proof is divided into several subsections, respectively giving the time evolution of the relative entropy, some useful functional inequality lemmas and theorems, and the main controlling arguments. We shall provide the global and local relative entropy control respectively, and also set up an independent subsection for the proof of an ODE hierarchy proposition. In Section \ref{regularity} we apply the results of Carlen-Loss and Osada and the parabolic maximum principle to give the needed regularity estimates for the limit density. In Section \ref{log} we first present the Grigor'yan parabolic maximum principle as the key tool, and then provide our desired logarithmic growth estimates, i.e. growth controls on $\nabla \log g$ and $\nabla^2 \log g$ under certain initial conditions.

\section{Proof of Main Results}\label{main}

In this section, we apply the relative entropy method to prove the propagation of chaos main results with respect to the above systems \eqref{particle} and \eqref{limit}. Due to the partial similarity of the methods and mid-steps between the global and local cases, we shall present the proofs simultaneously.

\subsection{Time Evolution of Relative Entropy}\label{time}

We present in this subsection the time evolution of the normalized relative entropy, which is computed directly using the evolution PDE of the two systems. First, we give the computations for the global relative entropy.
\begin{lemma}[Evolution of Global Relative Entropy]\label{evolution}
    \begin{align*}
        \frac{\ud}{\ud t} H_N(G^N|g^N)=&-\frac{\sigma}{N} \sum_{i=1}^N \int_{\mathbb{D}^N} G^N \Big|\nabla_{x_i} \log \frac{G^N}{g^N}\Big|^2 \ud Z^N\\
        &-\frac{1}{N^2} \sum_{i,j=1}^N \int_{\mathbb{D}^N} G^N \Big(m_j K(x_i-x_j)-K \ast \omega (x_i)\Big) \cdot \nabla \log g(m_i,x_i) \ud Z^N.
    \end{align*}
\end{lemma}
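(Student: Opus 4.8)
The plan is a direct relative-entropy computation. Write
$H_N(G^N|g^N)=\frac1N\int_{(\R\times\R^2)^N} G^N\log\frac{G^N}{g^N}\ud Z^N$
with $g^N=g^{\otimes N}$, so that $\log g^N=\sum_{i=1}^N\log g(m_i,x_i)$, and set $h^N=\log(G^N/g^N)$. Differentiating in time and using $\int\partial_t G^N\ud Z^N=\frac{\ud}{\ud t}1=0$ one gets
\begin{equation*}
\frac{\ud}{\ud t}\int G^N h^N\ud Z^N=\int (\partial_t G^N)\,h^N\ud Z^N-\int \frac{G^N}{g^N}\,\partial_t g^N\ud Z^N .
\end{equation*}
I would then substitute the Liouville equation \eqref{liouville} for $\partial_t G^N$ and the tensorized Fokker--Planck equation \eqref{tensorize} (with $k=N$) for $\partial_t g^N$, and split the right-hand side into a diffusion part (the $\sigma\Delta$ terms) and a transport part (the drift terms).

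For the diffusion part, integrate by parts in each $x_i$. Using the identities $\nabla_{x_i}G^N=\frac{G^N}{g^N}\nabla_{x_i}g^N+G^N\nabla_{x_i}h^N$ and $\nabla_{x_i}(G^N/g^N)=\frac{G^N}{g^N}\nabla_{x_i}h^N$, the two cross terms (one coming from $\int\Delta_{x_i}G^N\,h^N$, the other from $-\int\frac{G^N}{g^N}\Delta_{x_i}g^N$) cancel exactly, leaving $-\sigma\sum_{i=1}^N\int G^N|\nabla_{x_i}h^N|^2\ud Z^N$; dividing by $N$ gives the first term of the claimed identity. At this step the entropy-solution framework of Definition \ref{entropysolution}, together with the moment bounds on $G^N$ and the growth bound on $\nabla\log g$, is what guarantees that the relative Fisher information $\sum_i\int G^N|\nabla_{x_i}h^N|^2\ud Z^N$ is finite for a.e.\ $t$, so that the dissipation identity is legitimate (and the statement is understood in the almost-everywhere / integrated-in-time sense appropriate to entropy solutions; only the resulting upper bound on $\frac{\ud}{\ud t}H_N$ is needed later).

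For the transport part, the crucial structural fact is that the Biot--Savart kernel is divergence free: $\nabla_{x_i}\cdot K(x_i-x_j)=0$ and $\nabla_{x_i}\cdot (K\ast\omega)(x_i)=0$. Hence, after integrating by parts in $x_i$, every term in which a kernel is paired with $\nabla_{x_i}G^N$ (equivalently with $\nabla_{x_i}\log G^N$) vanishes, and only the terms pairing the kernels with $\nabla_{x_i}\log g^N=\nabla\log g(m_i,x_i)$ survive. Concretely the $G^N$-equation contributes $-\frac1N\sum_{i,j}\int G^N\,\M_j K(x_i-x_j)\cdot\nabla\log g(m_i,x_i)\ud Z^N$ and the $g^N$-equation contributes $+\sum_i\int G^N (K\ast\omega)(x_i)\cdot\nabla\log g(m_i,x_i)\ud Z^N$; rewriting the latter as $\frac1N\sum_{i,j}\int G^N(K\ast\omega)(x_i)\cdot\nabla\log g(m_i,x_i)\ud Z^N$ (summing the dummy index $j$ over its $N$ values) and combining yields precisely $-\frac1{N^2}\sum_{i,j}\int G^N(\M_j K(x_i-x_j)-K\ast\omega(x_i))\cdot\nabla\log g(m_i,x_i)\ud Z^N$, the second term. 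Dividing through by $N$ finishes the identity.

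The main obstacle is analytic rather than algebraic: justifying the differentiation under the integral sign and, above all, the integrations by parts, since $\log g^N$ and $\nabla\log g$ are unbounded functions of the $x$-variables. This is exactly where the hypotheses of Theorem \ref{entropic} enter — the $W^{2,1}\cap W^{2,\infty}$ regularity of $g$, the logarithmic growth bounds $|\nabla\log g|\lesssim 1+|x|$ and $|\nabla^2\log g|\lesssim 1+|x|^2$, and the Gaussian upper bound on $g$ (so $-\log g\lesssim 1+|x|^2$) — combined with Gaussian-type tail bounds for the entropy solution $G^N$ arising from the additive noise and the $W^{-1,\infty}$ bound on the drift. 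These make all boundary terms at spatial infinity vanish and all integrals absolutely convergent; the growth estimates on $g$ themselves are postponed to Sections \ref{regularity} and \ref{log}. A clean way to make everything rigorous is to first run the computation for a suitably regularized/truncated problem and then pass to the limit, as in \cite{jabin2018quantitative}.
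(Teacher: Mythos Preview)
Your proposal is correct and follows essentially the same route as the paper's own proof: differentiate $H_N$ under the integral, substitute the Liouville equation \eqref{liouville} and the tensorized limit equation \eqref{tensorize}, then integrate by parts, using the divergence-free structure of $K$ to see that only the $\nabla\log g$ terms survive in the transport part. The paper's write-up leaves the divergence-free cancellation implicit in the phrase ``by rearranging the above terms,'' whereas you make it explicit; but the computation is the same.
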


\begin{proof}
    Recall that $G^N$ satisfies the Liouville equation \eqref{liouville}:
    \begin{equation*}
        \partial_t G^N +\frac{1}{N} \sum_{i,j=1}^N m_j K(x_i-x_j) \cdot \nabla_{x_i} G^N=\sigma \sum_{i=1}^N \Delta_{x_i} G^N,
    \end{equation*}
    while the factorzied law $g^N$ solves \eqref{tensorize}:
    \begin{equation*}
        \partial_t g^N+\sum_{i=1}^N K \ast \omega(x_i) \cdot \nabla_{x_i} g^N=\sigma \sum_{i=1}^N \Delta_{x_i} g^N.
    \end{equation*}
    Hence by direct computations we have
    \begin{align*}
        &\frac{\ud}{\ud t} H_N(G^N|g^N)\\
        =\,&\frac{1}{N} \int_{\mathbb{D}^N} \partial_t G^N(\log G^N+1-\log g^N)+\frac{G^N}{g^N} \partial_t g^N \ud Z^N\\
        =\,&\frac{1}{N}\int_{\mathbb{D}^N} \Big(\sigma \sum_{i=1}^N \Delta_{x_i} G^N-\frac{1}{N} \sum_{i,j=1}^N m_j K(x_i-x_j) \cdot \nabla_{x_i} G^N\Big)(\log G^N+1-\log g^N) \ud Z^N\\
        &+\frac{1}{N} \int_{\mathbb{D}^N} \Big(\sigma \sum_{i=1}^N \Delta_{x_i} g^N-\sum_{i=1}^N K \ast \omega(x_i) \cdot \nabla_{x_i} g^N\Big)\frac{G^N}{g^N} \ud Z^N.
    \end{align*}
    Using integration by parts, we arrive at
    \begin{align*}
        &-\frac{\sigma}{N} \sum_{i=1}^N \int_{\mathbb{D}^N} \nabla_{x_i} G^N \cdot \nabla_{x_i}(\log G^N-\log g^N) \ud Z^N \\
        &+\frac{1}{N^2} \sum_{i,j=1}^N \int_{\mathbb{D}^N} m_j K(x_i-x_j) \cdot \nabla_{x_i} (\log G^N-\log g^N) G^N \ud Z^N\\
        &-\frac{\sigma}{N} \sum_{i=1}^N \int_{\mathbb{D}^N} \nabla_{x_i} g^N \cdot \nabla_{x_i} \frac{G^N}{g^N} \ud Z^N\\
        &-\frac{1}{N} \sum_{i=1}^N \int_{\mathbb{D}^N} \Big(K \ast \omega(x_i) \cdot \nabla_{x_i} g^N \Big) G^N \ud Z^N.
    \end{align*}
    By rearranging the above terms, we conclude our desired estimates.
\end{proof}

Now we turn to the computations for the local relative entropy.

\begin{lemma}[Evolution of Local Relative Entropy]\label{evolutionlocal}
    \begin{align*}
        &\frac{\ud}{\ud t} H_k(G^{N,k}|g^k)\\
        =&-\frac{\sigma}{k} \sum_{i=1}^k \int_{\mathbb{D}^k} G^{N,k} \Big|\nabla_{x_i} \log \frac{G^{N,k}}{g^k}\Big|^2 \ud Z^k\\
        &-\frac{1}{k} \sum_{i=1}^k \frac{1}{N} \sum_{j=1}^k \int_{\mathbb{D}^k} G^{N,k} \Big(m_j K(x_i-x_j)-K \ast \omega (x_i)\Big) \cdot \nabla \log g(m_i,x_i) \ud Z^k\\
        &-\frac{1}{k}\sum_{i=1}^k \frac{N-k}{N} \int_{\mathbb{D}^{k+1}} G^{N,k+1} \Big(m_{k+1}K(x_i-x_{k+1})-K \ast \omega(x_i) \Big) \cdot \nabla \log g(m_i,x_i) \ud Z^{k+1}\\
        &+\frac{1}{k}\sum_{i=1}^k \frac{N-k}{N} \int_{\mathbb{D}^{k+1}} G^{N,k+1} \Big(m_{k+1}K(x_i-x_{k+1}) \cdot \nabla_{x_i} \log G^{N,k}\Big) \ud Z^{k+1}.
    \end{align*}
\end{lemma}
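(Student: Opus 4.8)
The plan is to derive the formula for $\frac{\ud}{\ud t} H_k(G^{N,k}|g^k)$ by the same direct differentiation strategy used for Lemma \ref{evolution}, but now invoking the BBGKY hierarchy \eqref{hierarchy} in place of the Liouville equation \eqref{liouville}. First I would write
\begin{equation*}
    \frac{\ud}{\ud t} H_k(G^{N,k}|g^k) = \frac{1}{k} \int_{(\R \times \R^2)^k} \partial_t G^{N,k}\Big(\log G^{N,k}+1-\log g^k\Big) + \frac{G^{N,k}}{g^k}\partial_t g^k \ud Z^k,
\end{equation*}
and then substitute $\partial_t G^{N,k}$ from \eqref{hierarchy} and $\partial_t g^k$ from \eqref{tensorize}. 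The diffusion terms $\sigma\sum_i \Delta_{x_i}$ appear in both equations; after integration by parts they combine exactly as in the global case to yield the Fisher-information term $-\frac{\sigma}{k}\sum_{i=1}^k \int G^{N,k}|\nabla_{x_i}\log\frac{G^{N,k}}{g^k}|^2$, using that $\int \nabla_{x_i}G^{N,k}\cdot\nabla_{x_i}\log G^{N,k} = \int |\nabla_{x_i}G^{N,k}|^2/G^{N,k}$ and that the cross terms assemble into the square. This is routine and I would not belabor it.

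The new feature is that \eqref{hierarchy} carries two transport contributions rather than one: the ``internal'' sum $\frac{1}{N}\sum_{i,j=1}^k m_j K(x_i-x_j)\cdot\nabla_{x_i}G^{N,k}$ and the ``collision'' term $\frac{N-k}{N}\sum_{i=1}^k \int m_{k+1}K(x_i-x_{k+1})\cdot\nabla_{x_i}G^{N,k+1}\ud z_{k+1}$. I would handle each by integration by parts against the factor $(\log G^{N,k}+1-\log g^k)$. For the internal term, integrating by parts in $x_i$ and using $\udiv_{x_i} K = 0$ (so the drift is divergence-free and the $+1$ and $\log G^{N,k}$ pieces that would come from differentiating $G^{N,k}\log G^{N,k}$ vanish after integration by parts, precisely as in the global computation), one is left with $+\frac{1}{kN}\sum_{i,j=1}^k \int m_j K(x_i-x_j)\cdot\nabla_{x_i}(\log G^{N,k}-\log g^k)\,G^{N,k}$; combining with the $K\ast\omega$ part coming from \eqref{tensorize} (restricted to indices $i\le k$) and relabeling $\nabla_{x_i}\log g^k = \nabla\log g(m_i,x_i)$ because $g^k$ tensorizes, this produces the second line of the claimed identity, namely $-\frac{1}{k}\sum_{i=1}^k\frac{1}{N}\sum_{j=1}^k \int G^{N,k}(m_jK(x_i-x_j)-K\ast\omega(x_i))\cdot\nabla\log g(m_i,x_i)$, together with a leftover piece $\frac{1}{kN}\sum_{i,j=1}^k\int m_jK(x_i-x_j)\cdot\nabla_{x_i}\log G^{N,k}\,G^{N,k}$. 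For the collision term, the same divergence-free integration by parts in $x_i$ (now the integral is over $(\R\times\R^2)^{k+1}$) splits it as $-\frac{1}{k}\sum_i\frac{N-k}{N}\int G^{N,k+1}m_{k+1}K(x_i-x_{k+1})\cdot\nabla_{x_i}\log g^k + \frac{1}{k}\sum_i\frac{N-k}{N}\int G^{N,k+1}m_{k+1}K(x_i-x_{k+1})\cdot\nabla_{x_i}\log G^{N,k}$, where I used $\int\nabla_{x_i}G^{N,k+1}\ud z_{k+1} = \nabla_{x_i}G^{N,k}$ in the first piece (marginalization commutes with $\nabla_{x_i}$) and that $\log G^{N,k}$ depends only on $z_1,\dots,z_k$. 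The last of these is exactly the fourth line of the statement.

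It remains to account for the remaining $K\ast\omega$ terms from \eqref{tensorize} and the leftover internal piece, and to check the bookkeeping matches. The transport term in \eqref{tensorize} for $g^k$ is $\sum_{i=1}^k K\ast\omega(x_i)\cdot\nabla_{x_i}g^k$, and when paired with $\frac{G^{N,k}}{g^k}$ and integrated by parts against $g^k$ it contributes $-\frac{1}{k}\sum_i\int K\ast\omega(x_i)\cdot\nabla_{x_i}g^k\cdot G^{N,k}/... $ — more cleanly, after the standard manipulation it enters as $-\frac{1}{k}\sum_i\int G^{N,k}K\ast\omega(x_i)\cdot\nabla\log g(m_i,x_i)$. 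One splits the coefficient $1 = \frac{k}{N} + \frac{N-k}{N}$ is \emph{not} needed; rather the single $K\ast\omega(x_i)$ must be matched against both $\frac{1}{N}\sum_{j=1}^k m_jK(x_i-x_j)$ (giving the $(m_jK - K\ast\omega)$ difference with weight $1/N$ summed over $k$ indices $j$) and $\frac{N-k}{N}\int m_{k+1}K(x_i-x_{k+1})$ (giving the third-line difference), so one writes $K\ast\omega(x_i) = \frac{1}{N}\sum_{j=1}^k K\ast\omega(x_i) + \frac{N-k}{N}K\ast\omega(x_i)$, which is the correct partition of unity; the leftover $\frac{1}{kN}\sum_{i,j=1}^k\int m_jK(x_i-x_j)\cdot\nabla_{x_i}\log G^{N,k}\,G^{N,k}$ vanishes because $\sum_{i,j=1}^k\int m_jK(x_i-x_j)\cdot\nabla_{x_i}G^{N,k} = 0$ by the divergence-free, antisymmetry-in-$(i,j)$ argument (this is the standard fact that the internal interaction conserves entropy). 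Collecting the four surviving pieces gives precisely the stated identity. The main obstacle, such as it is, is purely organizational: correctly tracking the $\frac{1}{N}$ versus $\frac{N-k}{N}$ weights and verifying the two self-interaction remainders ($\log G^{N,k}$ against the internal kernel, and the $+1$-type terms) genuinely cancel; there is no analytic difficulty since all integrations by parts are justified by the assumed regularity and decay of $g$ and the entropy-solution framework for $G^N$, exactly as in Lemma \ref{evolution}.
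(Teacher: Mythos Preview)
Your proposal is correct and follows essentially the same route as the paper: differentiate $H_k$, substitute the BBGKY hierarchy \eqref{hierarchy} and the tensorized equation \eqref{tensorize}, integrate by parts, and rearrange using the partition $1=\frac{k}{N}+\frac{N-k}{N}$ to pair each drift with the corresponding $K\ast\omega$ piece. The paper's proof is in fact terser than yours on the bookkeeping, simply writing ``by rearranging the above terms'' at the end.

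One small correction: when you justify the vanishing of the leftover $\frac{1}{kN}\sum_{i,j=1}^k\int m_jK(x_i-x_j)\cdot\nabla_{x_i}G^{N,k}$, the ``antisymmetry-in-$(i,j)$'' argument does \emph{not} apply here, because the weight $m_j$ breaks the $(i,j)$ symmetry that the usual swap trick relies on. The cancellation comes purely from $\udiv K=0$ and integration by parts in $x_i$, which kills each term individually. Since you also invoke the divergence-free property, your conclusion stands; just drop the reference to antisymmetry.
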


\begin{proof}
    Recall that $G^{N,k}$ satisfies the BBGKY hierarchy \eqref{hierarchy}:
   \begin{equation*}
\begin{aligned}
    \partial_t G^{N,k}&+\frac{1}{N}\sum_{i,j=1}^k m_j K(x_i-x_j) \cdot \nabla_{x_i} G^{N,k}\\
    &+\frac{N-k}{N} \sum_{i=1}^k \int_{\mathbb{D}} m_{k+1}K(x_i-x_{k+1}) \cdot \nabla_{x_i} G^{N,k+1} \ud z_{k+1}=\sum_{i=1}^k \Delta_{x_i} G^{N,k}.
\end{aligned}
\end{equation*}
    while the factorized law $g^k$ solves \eqref{tensorize}:
    \begin{equation*}
        \partial_t g^k+\sum_{i=1}^k K \ast \omega(x_i) \cdot \nabla_{x_i} g^k=\sigma \sum_{i=1}^k \Delta_{x_i} g^k.
    \end{equation*}
    Hence by direct computations we have
    \begin{align*}
        &\frac{\ud}{\ud t} H_k(G^{N,k}|g^k)\\
        =\,&\frac{1}{k} \int_{\mathbb{D}^k} \partial_t G^{N,k}(\log G^{N,k}+1-\log g^k)+\frac{G^{N,k}}{g^k} \partial_t g^k \ud Z^k\\
        =\,&\frac{1}{k}\int_{\mathbb{D}^k} \Big(\sigma \sum_{i=1}^k \Delta_{x_i} G^{N,k}-\frac{1}{N} \sum_{i,j=1}^k m_j K(x_i-x_j) \cdot \nabla_{x_i} G^{N,k}\\
        &\qquad -\frac{N-k}{N} \sum_{i=1}^k \int_{\mathbb{D}} m_{k+1}K(x_i-x_{k+1}) \cdot \nabla_{x_i} G^{N,k+1} \ud z_{k+1}\Big)(\log G^{N,k}+1-\log g^k) \ud Z^k\\
        &+\frac{1}{k} \int_{\mathbb{D}^k} \Big(\sigma \sum_{i=1}^k \Delta_{x_i} g^k-\sum_{i=1}^k K \ast \omega(x_i) \cdot \nabla_{x_i} g^k\Big)\frac{G^{N,k}}{g^k} \ud Z^k.
    \end{align*}
    Using integration by parts, we arrive at
    \begin{align*}
        &-\frac{\sigma}{k} \sum_{i=1}^k \int_{\mathbb{D}^k} \nabla_{x_i} G^{N,k} \cdot \nabla_{x_i}(\log G^{N,k}-\log g^k) \ud Z^k \\
        &+\frac{1}{k} \sum_{i=1}^k \frac{1}{N} \sum_{j=1}^k \int_{\mathbb{D}^k} m_j K(x_i-x_j) \cdot \nabla_{x_i} (\log G^{N,k}-\log g^k) G^{N,k} \ud Z^k\\
        &+\frac{1}{k}\sum_{i=1}^k \frac{N-k}{N} \int_{\mathbb{D}^{k+1}} m_{k+1} K(x_i-x_{k+1}) \cdot \nabla_{x_i}(\log G^{N,k}-\log g^k) G^{N,k+1} \ud Z^{k+1}\\
        &-\frac{\sigma}{k} \sum_{i=1}^k \int_{\mathbb{D}^k} \nabla_{x_i} g^k \cdot \nabla_{x_i} \frac{G^{N,k}}{g^k} \ud Z^k\\
        &-\frac{1}{k} \sum_{i=1}^k \int_{\mathbb{D}^k} K \ast \omega(x_i) \cdot \nabla_{x_i} g^k G^{N,k} \ud Z^k.
    \end{align*}
    By rearranging the above terms, we conclude our desired estimates.
\end{proof}

\subsection{Law of Large Numbers and Large Deviation Type Theorems}\label{thms}

In the original paper \cite{jabin2018quantitative}, Jabin-Wang introduced two Law of Large Numbers and Large Deviation type theorems, which are crucial in controlling the expectation of some mean-zero functions with respect to the joint law, bounding it with the normalized relative entropy itself and some $O(1/N)$ error terms, while one may only get $O(1)$ bound by crude estimates. Their method includes expanding the exponential term by Taylor's expansion and counting carefully by combinatorial techniques that how many terms actually do not vanish, using the one-side or two-side cancellation rules. Note that in our setting, since we should view $(m,x)$ together as a single variable $z$ to keep the symmetry, the two theorems should be changed formally, see also \cite{shao2024quantitative}.

\begin{thm}[Law of Large Numbers]\label{lln}
    Consider any probability density $\bar\rho(z)$ on $\mathbb{D}$ and any test function $\phi(z,w) \in L^\infty(\mathbb{D}^2)$ satisfying the one-side cancelling property
    \begin{equation}\label{cancel1}
        \int_{\mathbb{D}} \phi(z,w)\bar\rho(w) \ud w=0, \, \mbox{for any\, } z,
    \end{equation}
    and $\| \phi \|_{L^\infty}<\frac{1}{2e}$.
    Then we have
    \begin{equation*}
        \int_{\mathbb{D}^N} \bar\rho^{\otimes N} \exp\Big(\frac{1}{N}\sum_{j,k=1}^N \phi(z_1,z_j)\phi(z_1,z_k)\Big) \ud Z^N \leq C<\infty.
    \end{equation*}
\end{thm}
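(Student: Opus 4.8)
The plan is to run the Taylor-expansion and combinatorial-counting argument of Jabin--Wang \cite{jabin2018quantitative}, adapted to the combined variable $z=(m,x)$. Write $\phi_{1j}:=\phi(z_1,z_j)$ and observe that $\sum_{j,k=1}^N\phi_{1j}\phi_{1k}=\big(\sum_{j=1}^N\phi_{1j}\big)^2\ge0$, so every term in the Taylor expansion of the exponential is nonnegative and, by monotone convergence,
\[
    \int_{(\R\times\R^2)^N}\bar\rho^{\otimes N}\exp\Big(\tfrac1N\sum_{j,k=1}^N\phi_{1j}\phi_{1k}\Big)\,\ud Z^N
    =\sum_{p=0}^{\infty}\frac{1}{p!\,N^{p}}\int_{(\R\times\R^2)^N}\bar\rho^{\otimes N}\Big(\sum_{j=1}^N\phi_{1j}\Big)^{2p}\ud Z^N .
\]
It thus suffices to bound the $p$-th summand, uniformly in $N$, by the general term of a convergent series.

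First I would expand $\big(\sum_j\phi_{1j}\big)^{2p}=\sum_{j_1,\dots,j_{2p}=1}^{N}\prod_{l=1}^{2p}\phi_{1 j_l}$ and integrate out $z_2,\dots,z_N$ with $z_1$ frozen. The structural point is that the whole expression is \emph{anchored} at $z_1$, which is integrated only at the very end; hence only the one-sided cancellation \eqref{cancel1} is required. Indeed, if some index $j_l\in\{2,\dots,N\}$ occurs exactly once among $j_1,\dots,j_{2p}$, the integration in the corresponding variable yields $\int_{\R\times\R^2}\phi(z_1,w)\bar\rho(w)\,\ud w=0$, killing the term; slots with $j_l=1$ merely contribute bounded factors $\phi(z_1,z_1)$. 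Therefore only those multi-indices survive in which every value appearing in $\{2,\dots,N\}$ appears \emph{at least twice}, and for such a multi-index the remaining $z_2,\dots,z_N$-integral, being a product of factors each bounded by a power of $\|\phi\|_{L^\infty}$ with total exponent $2p$, is bounded in absolute value by $\|\phi\|_{L^\infty}^{2p}$; the final integration in $z_1$ against the probability density $\bar\rho$ then costs nothing.

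It remains to count the surviving multi-indices. Since each non-anchor value is repeated at least twice among the $2p$ slots, at most $p$ distinct such values occur, so the number of survivors is at most $N^{p}$ times a purely combinatorial factor (accounting for which slots equal the anchor, how the remaining slots are grouped, and an injective assignment of the groups to values in $\{2,\dots,N\}$). The factor $N^{p}$ cancels the $N^{-p}$ from the normalization, and a careful bookkeeping of these groupings --- of the kind carried out via the one-sided cancellation rules in \cite{jabin2018quantitative} --- shows that the $p$-th summand is dominated by $C\,r^{p}$ for an absolute ratio $r$ that tends to $0$ as $\|\phi\|_{L^\infty}\to0$; the explicit hypothesis $\|\phi\|_{L^\infty}<\tfrac1{2e}$ is precisely what guarantees $r<1$, so the series converges and the bound is uniform in $N$ (the finitely many small values of $N$ being trivially finite, since there $\exp(\tfrac1N\sum_{j,k}\phi_{1j}\phi_{1k})\le e^{N\|\phi\|_{L^\infty}^{2}}$).

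I expect this last combinatorial step to be the main obstacle: one must organize the sum over $2p$-tuples so that exactly $p$ powers of $N$ --- and no more --- are extracted, and then check that the surviving combinatorial growth is beaten by $p!$ together with the smallness of $\|\phi\|_{L^\infty}$, which is where the threshold $\tfrac1{2e}$ enters. I note that an alternative, softer route avoids the combinatorics entirely: writing $S=\sum_{j=2}^{N}\phi(z_1,z_j)$, a sum of i.i.d.\ bounded mean-zero random variables given $z_1$ (mean zero by \eqref{cancel1}), a Hubbard--Stratonovich (Gaussian) representation of $\exp(\tfrac1N S^{2})$ combined with the sub-Gaussian bound $\E[e^{tS}]\le e^{t^{2}N\|\phi\|_{L^\infty}^{2}/2}$ yields a bound on $\E[\exp(\tfrac1N(\sum_{j=1}^N\phi_{1j})^{2})]$ that is uniform in $N$ as soon as $\|\phi\|_{L^\infty}$ is small. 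Either way, since $z=(m,x)$ is carried as a single variable throughout, the hypothesis \eqref{cancel1} and the whole argument apply verbatim with $z$ in place of the position variable of \cite{jabin2018quantitative}, as already observed in \cite{shao2024quantitative}.
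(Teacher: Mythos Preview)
Your proposal is correct and matches the paper's treatment: the paper does not give an independent proof of this theorem but simply refers to \cite{jabin2018quantitative}, noting (as you do) that the only change is to carry the combined variable $z=(m,x)$ through the argument, cf.\ \cite{shao2024quantitative}. Your sketch of the Taylor-expansion/combinatorial-counting mechanism and the role of the one-sided cancellation is an accurate summary of that proof; the alternative sub-Gaussian route you mention is essentially the idea behind the extended version (Theorem~\ref{lln2}) that the paper borrows from \cite{du2024collision}.
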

\begin{thm}[Large Deviation]\label{deviation}
    Consider any probability density $\bar\rho(z)$ on $\mathbb{D}$ and any test function $\phi(z,w)$ on $\mathbb{D}^2$ satisfying the two-side cancelling properties
    \begin{equation}\label{cancel2}
        \int_{\R \times \mathbb{R}^2}  \phi(z,w) \bar{\rho}(z)  \ud z=0, \,  \mbox{for any\,  } w, \quad \int_{\R \times \mathbb{R}^2}  \phi(z,w) \bar{\rho}(w) \ud w=0,  \, \mbox{for any\,  } z,
    \end{equation}
    and
    \begin{equation}\label{gamma}
        \gamma=C_{JW} \Big(\sup_{p \geq 1} \frac{\Vert \sup_w |\phi(\cdot, w)| \Vert_{L^p(\bar{\rho} \ud z)}}{p} \Big)^2 <1,
    \end{equation}
    where the constant $C_{JW}=1600^2+36e^4$ is obtained in \cite[Theorem 4]{jabin2018quantitative}. Then we have
    \begin{equation*}
        \int_{(\R \times \mathbb{R}^2)^N}  \bar{\rho}^{\otimes N} \exp\Big(\frac{1}{N} \sum_{i,j=1}^N \phi(z_i,z_j)\Big) \ud Z^N \leq C <\infty.
    \end{equation*}
\end{thm}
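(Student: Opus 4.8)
The plan is to prove Theorem~\ref{deviation} by carrying over the combinatorial scheme behind \cite[Theorem~4]{jabin2018quantitative}, the only change being that the base space there is replaced by $\R\times\R^2$ with $z=(m,x)$ treated as a single variable and $\bar\rho(z)\,\ud z$ as the reference probability measure; the same cosmetic adaptation was done on the torus in \cite{shao2024quantitative}. One Taylor‑expands the exponential and expands each power,
\begin{equation*}
\int\bar\rho^{\otimes N}\exp\Big(\frac1N\sum_{i,j=1}^N\phi(z_i,z_j)\Big)\ud Z^N=\sum_{n\ge0}\frac1{n!\,N^n}\sum_{\alpha}\int\bar\rho^{\otimes N}\prod_{\ell=1}^n\phi(z_{i_\ell},z_{j_\ell})\,\ud Z^N,
\end{equation*}
where $\alpha=\big((i_1,j_1),\dots,(i_n,j_n)\big)$ runs over $\{1,\dots,N\}^{2n}$, and one reads $\alpha$ as a multigraph with $n$ ordered edges on the vertex set $\{1,\dots,N\}$.

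The first decisive step is the cancellation. If some vertex $a$ occurs in exactly one factor $\phi(z_{i_\ell},z_{j_\ell})$, integrating $z_a$ against $\bar\rho$ first annihilates the whole term by whichever of the two identities in \eqref{cancel2} matches the slot $a$ occupies. Hence only configurations whose multigraph has minimal degree at least $2$ survive, apart from the diagonal edges $i_\ell=j_\ell$, which are not killed this way but contribute only a bounded factor $\|\psi\|_{L^1(\bar\rho)}$ each (with $\psi(z):=\sup_w|\phi(z,w)|$) and are few enough to absorb separately. For a surviving configuration using $m$ distinct vertices one has $2m\le\sum_v d_v=2n$, so $m\le n$, and therefore $N^{-n}\binom{N}{m}\le N^{m-n}/m!\le1/m!$; this is the mechanism turning the naive $O(1)$ bound into a summable series.

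The second step bounds a surviving term. Estimating $|\phi(z_{i_\ell},z_{j_\ell})|\le\psi(z_{i_\ell})$ decouples the $m$ active integrals into $\prod_v\|\psi\|_{L^{o_v}(\bar\rho)}^{o_v}$, where $o_v$ is the number of edges with $v$ in the first slot and $\sum_v o_v=n$. Hypothesis \eqref{gamma} says precisely that $\|\psi\|_{L^p(\bar\rho)}\le(\gamma/C_{JW})^{1/2}p$ for all $p\ge1$, so this product is $\le(\gamma/C_{JW})^{n/2}\prod_v o_v^{o_v}\le(\gamma/C_{JW})^{n/2}e^{\,n}\prod_v o_v!$ by Stirling. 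The number of ways to assign the first slots of the $n$ ordered edges compatibly with the out‑degree vector $(o_v)$ is the multinomial $n!/\prod_v o_v!$, which cancels both the $1/n!$ from the Taylor expansion and the $\prod_v o_v!$ just produced; what remains is to count the second‑slot assignments subject to the min‑total‑degree‑$2$ constraint, to sum over out‑degree vectors and over the active vertex set, and to use $m\le n$ together with $\binom{N}{m}N^{-n}\le1/m!$. After collecting constants, the whole series is dominated by $\sum_{n\ge0}C\,(\kappa\sqrt\gamma)^{\,n}$ for an absolute $\kappa$, and the value $C_{JW}=1600^2+36e^4$ is calibrated exactly so that $\kappa\sqrt\gamma<1$ as soon as $\gamma<1$, which yields the asserted finite constant.

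The genuinely laborious point will be this last bookkeeping — controlling the number of min‑degree‑$2$ second‑slot assignments against the factorial growth of the $L^p$ norms of $\psi$ so that everything telescopes into a convergent geometric series — together with the careful treatment of diagonal and repeated edges. All of this is essentially contained in \cite{jabin2018quantitative}: since \eqref{cancel2} is imposed in the full variable $z\in\R\times\R^2$ and $\bar\rho\,\ud z$ is an honest probability measure, no new phenomenon arises, and one may alternatively invoke \cite[Theorem~4]{jabin2018quantitative} verbatim with $\R^d$ replaced by $\R\times\R^2$. The companion statement, Theorem~\ref{lln}, is proved by the same method, now using the one‑sided identity \eqref{cancel1} and the squared double sum.
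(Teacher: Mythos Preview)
Your proposal is correct and aligns with the paper's own treatment: the paper does not reprove this theorem but simply refers the reader to \cite[Theorem~4]{jabin2018quantitative}, noting only the cosmetic change that the base variable is now $z=(m,x)\in\R\times\R^2$ with reference measure $\bar\rho(z)\,\ud z$, exactly as you do. Your sketch of the Taylor expansion and the min-degree-2 combinatorics is an accurate summary of the Jabin--Wang argument, and your concluding remark that one may invoke their theorem verbatim is precisely the paper's stance.
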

We refer the proofs of the above theorems to \cite{jabin2018quantitative}.

The Law of Large Numbers and Large Deviation Theorems above have been successfully applied to estimate the global relative entropy on the torus for various models. But in the setting of this article, there are two main difficulties to overcome. 

The first problem is that we are now working on the whole space, hence we can no longer expect that the test function $\phi$ in Theorem \ref{lln} is bounded. We also mention that Lim-Lu-Nolen \cite{lim2020quantitative} provides another proof for Theorem \ref{deviation} using martingale inequalities when $\phi \in L^\infty$. In the recent work Du-Li \cite{du2024collision} dealing with a new collision-oriented particle system for approximating the Landau type equations, the Law of Large Numbers Theorem \ref{lln} is extended to the case that $\phi$ has linear growth in the second variable $w$ uniformly in the first variable $z$, using the classical Hoeffding's inequality in high-dimensional probability, see for instance the classic book by Vershynin \cite{vershynin2018high}. A special case of this extension is also proved in the recent work of Gong-Wang-Xie \cite{gong2024uniform}. This result should be crucial to our method.

The second problem is that in order to obtain the local optimal rate $O(k^2/N^2)$ for the relative entropy estimate, we need to scale the small constant $\gamma$ by a factor of $k/N$, see Subsection \ref{control2} below. However, the upper bound for the functional first obtained in \cite{jabin2018quantitative} can not well reflect this scaling, so we need to estimate the upper bound more carefully. This has been observed by S. Wang \cite{wang2025sharp} for Theorem \ref{deviation}, and we are now giving similar improvements for both theorems.

\begin{thm}[Law of Large Numbers, an extended version]\label{lln2}
    Consider any probability density $\bar\rho(z)$ on $\mathbb{D}$ and any test function $\phi(z,w)$ on $\mathbb{D}^2$ vanishing on the diagonal $\Delta=\lbrace (z,z): z \in \mathbb{D} \rbrace$ and satisfying the one-side cancelling property
    \begin{equation}\label{cancel3}
        \int_{\mathbb{D}} \phi(z,w)\bar\rho(w) \ud w=0, \, \mbox{for any\, } z,
    \end{equation}
    and for the universal constant $c_0>0$ in the Hoeffding's inequality we have
    \begin{equation*}
        \inf \Big\lbrace c>0: \int_{\mathbb{D}} \exp\Big(\sup_z |\phi(z,w)|^2/c^2\Big) g(w) \ud w \Big\rbrace <\lambda c_0,
    \end{equation*}
    where $\lambda \in (0,\frac{1}{2})$ is some small constant. Then we have
    \begin{equation*}
        \log \int_{\mathbb{D}^N} \bar\rho^{\otimes N} \exp\Big(\frac{1}{N}\sum_{j,k=1}^N \phi(z_1,z_j)\phi(z_1,z_k)\Big) \ud Z^N \leq \frac{C}{\lambda}<\infty.
    \end{equation*}
\end{thm}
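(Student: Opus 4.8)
The plan is to prove Theorem~\ref{lln2} via Gaussian (Hubbard--Stratonovich) linearization, which replaces the Taylor-expansion-and-combinatorics argument behind Theorem~\ref{lln} by a short moment computation and, more importantly, produces a bound with explicit dependence on the smallness parameter $\lambda$ rather than a crude $O(1)$ constant. Write $\kappa:=\bigl\|\sup_z|\phi(z,\cdot)|\bigr\|_{\psi_2(\bar\rho)}$ for the sub-Gaussian (Orlicz) norm appearing in the hypothesis, so that the assumption reads $\kappa<\lambda c_0$. Treating $\phi$ as scalar (the vector-valued case is identical upon reading each product $\phi(z_1,z_j)\phi(z_1,z_k)$ as an inner product and $|\,\cdot\,|$ as the Euclidean norm),
\[
\frac{1}{N}\sum_{j,k=1}^N \phi(z_1,z_j)\phi(z_1,z_k)=\Big|\frac{S}{\sqrt N}\Big|^2\ge 0,\qquad S:=\sum_{j=1}^N\phi(z_1,z_j)=\sum_{j=2}^N\phi(z_1,z_j),
\]
the last equality because $\phi$ vanishes on the diagonal. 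I would then use the identity $e^{t^2}=\pi^{-1/2}\int_{\R}e^{-u^2+2tu}\,\ud u$ to linearize the exponential: $\exp(|S/\sqrt N|^2)=\pi^{-1/2}\int_\R e^{-u^2+2uS/\sqrt N}\,\ud u$.

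Integrating against $\bar\rho^{\otimes N}$ and applying Fubini, I would condition on $z_1$. For each fixed $z_1$, the random variables $X_j:=\phi(z_1,z_j)$, $j=2,\dots,N$, are i.i.d.\ under $\bar\rho^{\otimes(N-1)}$, are centered thanks to the one-side cancelling property \eqref{cancel3}, and are sub-Gaussian with $\|X_j\|_{\psi_2(\bar\rho)}\le\kappa$ \emph{uniformly in} $z_1$ — this uniformity is precisely why the supremum over the first slot sits inside the Orlicz norm in the hypothesis. Hence the conditional moment generating function factorizes,
\[
\E\Big[e^{2uS/\sqrt N}\,\Big|\,z_1\Big]=\Big(\E_w\, e^{(2u/\sqrt N)\,\phi(z_1,w)}\Big)^{N-1}\le \big(e^{c\,(2u/\sqrt N)^2\kappa^2}\big)^{N-1}\le e^{4c\,u^2\kappa^2},
\]
using the standard sub-Gaussian MGF bound $\E_w e^{\theta\phi(z_1,w)}\le e^{c\theta^2\kappa^2}$, valid for all $\theta\in\R$ since $\phi(z_1,\cdot)$ is centered and sub-Gaussian ($c$ universal), with constants independent of $z_1$. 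Substituting and carrying out the Gaussian integral in $u$, which converges exactly when $4c\kappa^2<1$, yields
\[
\int_{(\R\times\R^2)^N}\bar\rho^{\otimes N}\exp\Big(\frac{1}{N}\sum_{j,k=1}^N\phi(z_1,z_j)\phi(z_1,z_k)\Big)\ud Z^N\le \frac{1}{\sqrt{1-4c\kappa^2}},
\]
uniformly, hence the same after the harmless final integration in $z_1$. The Hoeffding constant $c_0$ in the hypothesis is calibrated so that $\kappa<\lambda c_0$ with $\lambda\in(0,\tfrac12)$ forces $4c\kappa^2\le\lambda^2<1$; taking logarithms gives $\log(\cdots)\le-\tfrac12\log(1-\lambda^2)$, which is in particular bounded by $C/\lambda$, proving the theorem.

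The step I expect to be the main obstacle is the constant bookkeeping in the last two displays: one must match the universal constant $c$ in the sub-Gaussian MGF estimate against the Hoeffding constant $c_0$ used to phrase the hypothesis, so that the normalized sum $S/\sqrt N$ has sub-Gaussian parameter strictly below the critical value $1$ — this is what keeps the Gaussian integral finite — and then track the resulting dependence on $\lambda$. This is the ``Law of Large Numbers'' counterpart of the refinement S.\ Wang carried out for the ``Large Deviation'' estimate (Theorem~\ref{deviation}), and it is needed because in the local hierarchy of Subsection~\ref{control2} these estimates are applied to a test function whose magnitude carries an extra factor of order $k/N$ that must survive into the final bound. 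The two structural hypotheses are each used once and for a definite reason: diagonal vanishing removes the self-interaction term $\phi(z_1,z_1)$, which is no longer an $O(1/N)$ perturbation once $\phi$ is unbounded, while one-side cancellation centers the summands $X_j$, without which the product of moment generating functions would grow like $e^{cN}$ and destroy the estimate.
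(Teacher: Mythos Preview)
Your argument is correct. The paper does not prove this theorem in-line but defers to Du--Li \cite{du2024collision}, where the proof proceeds by conditioning on $z_1$, observing that $S/\sqrt{N}=\tfrac{1}{\sqrt N}\sum_{j\ge 2}\phi(z_1,z_j)$ is a normalized sum of centered i.i.d.\ sub-Gaussian variables (uniformly in $z_1$), invoking Hoeffding's inequality to get a sub-Gaussian tail for $S/\sqrt N$, and then integrating the tail to bound $\E\exp(|S/\sqrt N|^2)$. Your route is the same at the structural level---condition on $z_1$, exploit \eqref{cancel3} for centering and the $\psi_2$ hypothesis for uniform sub-Gaussianity---but replaces the tail-integration step by the Hubbard--Stratonovich identity and a direct MGF computation. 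This is arguably cleaner: it avoids layering several equivalent sub-Gaussian characterizations and produces the explicit bound $(1-4c\kappa^2)^{-1/2}$ in one line, from which the claimed $C/\lambda$ follows trivially. In fact your bound $-\tfrac12\log(1-\lambda^2)=O(\lambda^2)$ is strictly sharper than the $C/\lambda$ recorded in the statement, which is consistent with how the theorem is actually used in Subsection~\ref{control2} (the rescaled test function there carries an extra $k/N$ and one wants the log to scale accordingly). Both approaches need exactly the same two structural hypotheses, for exactly the reasons you identify: diagonal vanishing to drop the unbounded self-interaction $\phi(z_1,z_1)$, and one-sided cancellation to center the summands so that the product of MGFs does not blow up.
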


\begin{thm}[Large Deviation, an extended version]\label{deviation2}
    Consider any probability density $\bar\rho(z)$ on $\mathbb{D}$ and any test function $\phi(z,w)$ on $\mathbb{D}^2$ vanishing on the diagonal $\Delta=\lbrace (z,z): z \in \mathbb{D} \rbrace$ and satisfying the two-side cancelling properties
    \begin{equation}\label{cancel4}
        \int_{\R \times \mathbb{R}^2}  \phi(z,w) \bar{\rho}(z)  \ud z=0, \,  \mbox{for any\,  } w, \quad \int_{\R \times \mathbb{R}^2}  \phi(z,w) \bar{\rho}(w) \ud w=0,  \, \mbox{for any\,  } z,
    \end{equation}
    and
    \begin{equation}\label{gamma2}
        \gamma=C_{JW} \Big(\sup_{p \geq 1} \frac{\Vert \sup_w |\phi(\cdot, w)| \Vert_{L^p(\bar{\rho} \ud z)}}{p} \Big)^2 <\frac{1}{2}
    \end{equation}
    where the constant $C_{JW}=1600^2+36e^4$ is obtained in \cite[Theorem 4]{jabin2018quantitative}. Then we have
    \begin{equation*}
        \log \int_{(\R \times \mathbb{R}^2)^N}  \bar{\rho}^{\otimes N} \exp\Big(\frac{1}{N} \sum_{i,j=1}^N \phi(z_i,z_j)\Big) \ud Z^N \leq C\gamma <\infty.
    \end{equation*}
\end{thm}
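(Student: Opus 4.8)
The plan is to follow the exponential moment expansion of \cite[Theorem 4]{jabin2018quantitative}, but to track the dependence on $\gamma$ more carefully so that the final bound is $C\gamma$ rather than merely $C<\infty$. First I would expand the exponential as a power series: writing $\Phi_N = \frac{1}{N}\sum_{i,j=1}^N \phi(z_i,z_j)$, we have $\int \bar\rho^{\otimes N} e^{\Phi_N} = \sum_{n\geq 0} \frac{1}{n!}\int \bar\rho^{\otimes N}\Phi_N^n$. Expanding $\Phi_N^n$ into a sum over index tuples $((i_1,j_1),\dots,(i_n,j_n))$ and integrating against the product measure, the two-sided cancellation property \eqref{cancel4} forces a term to vanish unless every index value appearing does so at least twice across the whole tuple — this is exactly the combinatorial counting in \cite{jabin2018quantitative}. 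The key quantitative input is that each surviving term of ``multiplicity pattern'' contributing $2p$ factors of a repeated index is bounded, via the $L^p(\bar\rho\,\mathrm{d}z)$ norms in \eqref{gamma2}, by a quantity controlled by $(\gamma p!/C_{JW})^{\text{(something)}}$; collecting the bookkeeping yields the geometric-type series $\sum_{n} (\text{const})^n \gamma^{\lceil n/2\rceil}$, whose value for $\gamma < 1/2$ is $1 + C\gamma + O(\gamma^2) \leq 1 + C'\gamma$. Taking logarithms and using $\log(1+x)\leq x$ gives $\log\int \bar\rho^{\otimes N}e^{\Phi_N} \leq C\gamma$.

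More concretely, I would organize the counting as in \cite{jabin2018quantitative}: split each monomial into a product of ``connected clusters'' according to which indices are shared; a cluster of size $\ell$ built from $\ell$ pair-factors contributes at most $(C\gamma)^{\ell/2}\,\ell!$ up to combinatorial constants (the $\ell!$ absorbed against the $1/n!$ from the Taylor expansion and the number of ways to form the clusters). The crucial point for the sharpened statement is that the smallest nontrivial cluster already carries a full power of $\gamma$ (not $\sqrt\gamma$), because the diagonal-vanishing hypothesis $\phi|_\Delta = 0$ rules out the would-be $\gamma^{1/2}$ single-index self-contractions; hence the whole sum beyond the constant term $1$ is $O(\gamma)$ uniformly in $N$. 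This is precisely the mechanism already identified by S. Wang \cite{wang2024sharp}, so I would cite that argument for the fine combinatorics and only spell out the changes needed because here $z=(m,x)$ is the composite variable and $\bar\rho$ lives on $\R\times\R^2$ — but since the cancellation and norm hypotheses are stated directly in terms of $z$ and $\bar\rho$, no real change occurs; the estimates go through verbatim with $(\R\times\R^2)^N$ in place of the torus.

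The main obstacle I anticipate is the bookkeeping in the cluster expansion: one must verify that replacing the crude bound ``each cluster $\leq C$'' (which only gives finiteness) by ``each cluster $\leq C\gamma$ when it is nontrivial'' does not break the convergence of the series — i.e. that the combinatorial prefactors counting the number of index-partitions with a given cluster structure are still summable against $\gamma^{n/2}$ for $\gamma<1/2$. This is where the threshold $1/2$ (rather than the $1$ of Theorem \ref{deviation}) enters: one needs enough geometric decay to both converge and to isolate the leading $O(\gamma)$ contribution. Once the series is shown to be $1 + C\gamma + \sum_{m\geq 2}(C\gamma)^m \leq 1 + C'\gamma$ for $\gamma < 1/2$, the logarithmic bound is immediate. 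The proof of Theorem \ref{lln2} is parallel but simpler: there $\phi$ appears quadratically in $z_1$ only, so one expands in the single ``root'' variable $z_1$ and applies Hoeffding's inequality (for sub-Gaussian sums) to the conditional sum $\frac{1}{N}\sum_j \phi(z_1,z_j)$ given $z_1$, the sub-Gaussianity being quantified exactly by the Orlicz-norm hypothesis with constant $\lambda c_0$; the factor $1/\lambda$ in the conclusion comes from the standard $\exp(t^2\|\cdot\|_{\psi_2}^2)$-type bound once $\lambda < 1/2$ keeps the exponent below the radius of convergence.
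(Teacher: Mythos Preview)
Your proposal is correct and matches the paper's treatment: the paper does not reprove this theorem but simply refers to \cite{wang2024sharp} ``without any changes,'' and your sketch --- power-series expansion, Jabin--Wang combinatorics with the diagonal-vanishing refinement, extraction of the leading $O(\gamma)$ term so that the integral is $1+O(\gamma)$ --- is exactly the mechanism of that reference. The closing paragraph on Theorem~\ref{lln2} is extraneous to this statement but otherwise harmless.
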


The proof of the extended Law of Large Numbers Theorem \ref{lln2} is referred to \cite{du2024collision}, where the constant upper bound is slightly modified, while the proof of the extended Large Deviation Theorem \ref{deviation2} is referred to \cite{wang2025sharp} without any changes.

Finally, we present two technical lemmas that shall be used in our proof. The first one is the famous Donsker-Varadhan inequality, which comes from a variational formula of the relative entropy. This inequality allows us to control the expectation of a test function with respect to a non-factorized density $\rho_N$ by the expectation of another exponential test function with respect to a factorized density $\bar{\rho}^{\otimes N}$ and the relative entropy between the two densities. The expectation with respect to a factorized density can be estimated by the Law of Large Numbers Theorem \ref{lln2} and the Large Deviation Theorem \ref{deviation2} above.

\begin{lemma}[Donsker-Varadhan Inequality]\label{gibbs}
    Consider any probability density $\bar\rho(z)$ on $\mathbb{D}$ and $\rho_N(Z^N)$ on $\mathbb{D}^N$ and any test function $\Phi(Z^N)$ on $\mathbb{D}^N$, then for any $\eta>0$ we have
    \begin{equation*}
        \int_{\mathbb{D}^N} \Phi\rho_N \ud Z^N \leq \frac{1}{\eta}H_N(\rho_N|\bar\rho^{\otimes N})+\frac{1}{\eta N}\log \int_{\mathbb{D}^N} \exp(N \eta \Phi) \bar{\rho}^{\otimes N} \ud Z^N.
    \end{equation*}
\end{lemma}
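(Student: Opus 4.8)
The plan is to invoke the classical variational (Donsker–Varadhan) characterization of relative entropy in the form of the Young-type inequality: for any probability measures $\mu,\nu$ on a common space and any bounded measurable $\Psi$,
\[
\int \Psi \ud\mu \leq H(\mu|\nu)+\log\int e^{\Psi}\ud\nu,
\]
which itself follows from Jensen's inequality. I would then rescale to produce the stated constants. Concretely, I would apply this inequality with $\mu=\rho_N$, $\nu=\bar\rho^{\otimes N}$, and $\Psi=N\eta\,\Phi$ on the space $(\R\times\R^2)^N$, giving
\[
N\eta\int_{(\R\times\R^2)^N}\Phi\,\rho_N\ud Z^N\leq H(\rho_N|\bar\rho^{\otimes N})+\log\int_{(\R\times\R^2)^N}\exp(N\eta\Phi)\ud Z^N.
\]
Dividing through by $N\eta$ and recalling the normalization $H_N(\rho_N|\bar\rho^N)=\frac1N H(\rho_N|\bar\rho^{\otimes N})$ from Definition~\ref{entropy} yields exactly the claimed bound.

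For completeness I would include the one-line proof of the underlying inequality: writing $d\tilde\nu=e^{\Psi}\ud\nu/\!\int e^{\Psi}\ud\nu$, one has $H(\mu|\nu)=H(\mu|\tilde\nu)+\int\big(\Psi-\log\int e^{\Psi}\ud\nu\big)\ud\mu\geq \int\Psi\ud\mu-\log\int e^{\Psi}\ud\nu$, using only $H(\mu|\tilde\nu)\geq 0$ (Jensen). A small technical point is that $\Phi$ in the statement is merely called a ``test function'': if it is unbounded one should first argue by truncation (replace $\Phi$ by $\Phi\wedge L$, apply the bounded case, and let $L\to\infty$ using monotone convergence on the left and the right, provided $\int e^{N\eta\Phi}\ud Z^N<\infty$, which is implicitly assumed for the statement to be meaningful).

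There is no real obstacle here; the only thing to watch is bookkeeping of the factor $N$ coming from the normalized entropy $H_N$ versus the unnormalized $H$, and ensuring the sign of $\eta>0$ is used so that dividing by $N\eta$ preserves the inequality direction. This lemma is standard and will serve downstream as the device for switching the reference measure in the relative-entropy estimates (combined with Theorems~\ref{lln2} and~\ref{deviation2} to absorb the exponential moments into $O(1/N)$ or $O(k/N^2)$ error terms).
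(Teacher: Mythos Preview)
Your proposal is correct and is precisely the standard Donsker--Varadhan argument. The paper does not give its own proof of this lemma at all; it simply writes ``We refer the proof of Lemma~\ref{gibbs} to \cite{jabin2018quantitative}'', and the argument in that reference is exactly the one you wrote. One small observation: your derivation actually produces the inequality with $\bar\rho^{\otimes N}\ud Z^N$ as the measure in the exponential integral on the right, not bare $\ud Z^N$ as written in the statement; this is a typo in the paper's formulation (and indeed every downstream application of the lemma in Subsections~\ref{control} and~\ref{control2} uses it with $\bar\rho^{\otimes N}$ present, matching Theorems~\ref{lln}--\ref{deviation2}), so your version is the one that is both correct and actually used.
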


The proof of Lemma \ref{gibbs} is rather standard in the literature. We refer to \cite[Lemma 1]{jabin2018quantitative} for instance. The second one is the chain rule of the relative entropy between two disintegrated probability measures, which allows us to estimate the relative entropy between two conditional measures.

\begin{lemma}[Chain Rule of Relative Entropy]\label{chain}
    For any two disintegrated probability measures $(m^i(\ud x) K_x^i(\ud y))_{i=1,2}$ on a product space $E_1 \times E_2$, we have
    \begin{equation*}
    \begin{aligned}
        \int_{E_1} H(K_x^1|K_x^2) m^1(\ud x) &\, \leq H(m_1|m_2)+\int_{E_1} H(K_x^1|K_x^2) m^1(\ud x)\\
        &\, =  H\Big(m^1(\ud x) K_x^1(\ud y)|m^2(\ud x) K_x^2(\ud y)\Big).
    \end{aligned}
    \end{equation*}
\end{lemma}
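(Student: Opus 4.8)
The displayed chain of relations rests on a single substantive fact, the additivity of relative entropy under disintegration:
\[
H\big(m^1(\ud x)K_x^1(\ud y)\,\big|\,m^2(\ud x)K_x^2(\ud y)\big)=H(m^1|m^2)+\int_{E_1}H(K_x^1|K_x^2)\,m^1(\ud x).
\]
Indeed, once this identity is known, the stated inequality follows at once from the nonnegativity $H(m^1|m^2)\geq 0$. So the plan is to prove the identity directly from the definition of relative entropy using the uniqueness of disintegrations.

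First I would dispose of the singular case. Write $\mu=m^1(\ud x)K_x^1(\ud y)$ and $\nu=m^2(\ud x)K_x^2(\ud y)$. Projecting onto $E_1$ shows that $\mu\ll\nu$ forces $m^1\ll m^2$, and a standard disintegration argument then forces $K_x^1\ll K_x^2$ for $m^1$-a.e.\ $x$; conversely, if either of these fails then $\mu\not\ll\nu$, so $H(\mu|\nu)=+\infty$ and, simultaneously, either $H(m^1|m^2)=+\infty$ or $\int_{E_1}H(K_x^1|K_x^2)\,m^1(\ud x)=+\infty$, and the identity holds trivially. So assume $\mu\ll\nu$ and set $p(x)=\frac{\ud m^1}{\ud m^2}(x)$ and $q_x(y)=\frac{\ud K_x^1}{\ud K_x^2}(y)$, choosing a jointly measurable version of $(x,y)\mapsto q_x(y)$ (available since our state spaces are Euclidean, hence standard Borel).

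Next I would verify the factorization of the density. Testing against an arbitrary bounded measurable $\varphi$ on $E_1\times E_2$ and disintegrating both $\mu$ and $\nu$ gives $\int\varphi\,\ud\mu=\int\varphi(x,y)\,p(x)q_x(y)\,\ud\nu$, so by uniqueness of the Radon--Nikodym derivative $\frac{\ud\mu}{\ud\nu}(x,y)=p(x)q_x(y)$. Taking logarithms and integrating against $\mu$,
\[
H(\mu|\nu)=\int_{E_1}\log p(x)\,m^1(\ud x)+\int_{E_1}\Big(\int_{E_2}\log q_x(y)\,K_x^1(\ud y)\Big)m^1(\ud x),
\]
where the first term uses that $K_x^1$ is a probability measure and the second uses Tonelli's theorem after splitting $q_x\log q_x$ into positive and negative parts. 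The first integral equals $H(m^1|m^2)$ and the inner integral in the second equals $H(K_x^1|K_x^2)$, which is the desired identity; the inequality in the statement is then exactly $\int_{E_1}H(K_x^1|K_x^2)\,m^1(\ud x)\leq H(m^1|m^2)+\int_{E_1}H(K_x^1|K_x^2)\,m^1(\ud x)=H(\mu|\nu)$.

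The only point requiring care is the measurability of $x\mapsto q_x$ and of $x\mapsto H(K_x^1|K_x^2)$, together with the interchange of integrals for the signed integrand $\log q_x(y)$; this is routine on standard Borel spaces but is the step one must not skip. Since this lemma is quoted verbatim from \cite{lacker2021hierarchies}, in the text I would simply cite that reference, the argument above being the self-contained proof.
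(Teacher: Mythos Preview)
Your proposal is correct: you prove the additivity identity for relative entropy under disintegration by factoring the Radon--Nikodym derivative and then deduce the inequality from $H(m^1|m^2)\geq 0$, handling the singular case and the measurability issues carefully. The paper does not actually give a proof of this lemma; it simply cites Budhiraja--Dupuis \cite{budhiraja2019analysis} (and attributes the statement to \cite{lacker2021hierarchies}), so your self-contained argument in fact goes beyond what the paper provides.
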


We refer the proof of Lemma \ref{chain} to Budhiraja-Dupuis \cite{budhiraja2019analysis}.

\subsection{Global Relative Entropy Control}\label{control}

In this subsection, with all the key tools for preparation in the last subsection, we shall prove our main estimate on the right-hand side of Lemma \ref{evolution}. Thanks to the non-positive dissipation term, we only need to control the second term, which we recall
\begin{equation*}
    -\frac{1}{N^2} \sum_{i,j=1}^N \int_{\mathbb{D}^N} G^N \Big(m_j K(x_i-x_j)-K \ast \omega (x_i)\Big) \cdot \nabla \log g(m_i,x_i) \ud Z^N.
\end{equation*}
Due to the appearance of $m_j$ in front of the singular kernel $K(x_i-x_j)$, it seems impossible to use the symmetrization trick as in \cite{jabin2018quantitative,feng2023quantitative}. Hence we use integration by parts, thanks to that $K \in W^{-1,\infty}$, with $K=\nabla \cdot V$ as in \cite{jabin2018quantitative} where
\begin{equation*}
    V(x)=\Big(-\frac{1}{2\pi}\arctan\frac{x_1}{x_2}\Big)\text{Id}
\end{equation*}
is bounded, to eliminate the singularity and obtain
\begin{align}
    &\, \frac{1}{N^2} \sum_{i,j=1}^N \int_{\mathbb{D}^N} \Big(m_jV(x_i-x_j)-V \ast \omega(x_i)\Big):\Big(\nabla_{x_i}g^N \otimes \nabla_{x_i}\frac{G^N}{g^N}\Big) \ud Z^N\label{thm1}\\
    +&\, \frac{1}{N^2} \sum_{i,j=1}^N \int_{\mathbb{D}^N} G^N\Big(m_jV(x_i-x_j)-V\ast \omega(x_i)\Big):\frac{\nabla_{x_i}^2 g^N}{g^N} \ud Z^N.\label{thm2}
\end{align}

\textbf{Control of \eqref{thm2} :} For the second term \eqref{thm2}, we denote that the test function
\begin{equation*}
    \phi(z,w)=\phi(m,x,m^\prime,x^\prime)=\Big(m^\prime V(x-x^\prime)-V \ast \omega(x)\Big):\frac{\nabla^2 g}{g}(m,x).
\end{equation*}
It is straightforward to check that $\phi$ satisfies the two cancelling properties \eqref{cancel2}. Also, we have $\phi$ satisfies the growth condition
\begin{equation*}
    f(m,x) \triangleq \sup_{m^\prime,x^\prime} |\phi(m,x,m^\prime,x^\prime)| \leq C\frac{B_1}{1+t}(B_2-\log(1+t)-\log g(m,x))
\end{equation*}
since $V$ is bounded and $\M$ is compactly supported and $\nabla^2 g/g$ satisfies the logarithmic estimates as in Proposition \ref{logarithmic} below. Moreover, for any $\lambda(t)>0$, as in \cite{jabin2016mean} we have
\begin{equation*}
    \int_{\mathbb{D}} e^{\lambda(t)f} g(z) \ud z \geq \frac{\lambda^p}{p!} \|f\|_{L^p(g \ud z)}^p \geq \frac{\lambda^p}{p^p} \|f\|_{L^p(g \ud z)}^p.
\end{equation*}
Hence, in order to obtain the convergence of the exponential integral, we choose
\begin{equation*}
    \lambda(t)=\frac{1+t}{2CB_1}
\end{equation*}
to conclude that
\begin{align*}
    \sup_{p \geq 1} \frac{\|f\|_{L^p(g \ud z)}}{p} \leq &\, \frac{1}{\lambda} \int_{\mathbb{D}} e^{\lambda f}g(z) \ud z\\
    \leq &\,\frac{2CB_1}{1+t} \int_{\mathbb{D}} \exp \Big(\frac{B_2}{2}-\frac{1}{2}\log(1+t)-\frac{1}{2}\log g(z) \Big) g(z) \ud z \\
    \leq &\,\frac{C}{1+t} \int_{\mathbb{D}} \frac{1}{\sqrt{1+t}} \sqrt{g(z)} \ud z \leq \frac{C}{1+t},
\end{align*}
where we use the Gaussian upper bound of $g$ as in Lemma \ref{gaussianabove} below. Applying the Donsker-Varadhan inequality in Lemma \ref{gibbs} for the test function
\begin{equation*}
    \Phi=\frac{1}{N^2}\sum_{i,j=1}^N \phi(z,w)
\end{equation*}
and for some $\eta(t)=C^{-1}(1+t)$, and using the Large Deviation Theorem \ref{deviation2}, we bound the second term \eqref{thm2} by
\begin{equation*}
    \frac{C}{1+t}\Big(H_N(G^N|g^N)+\frac{1}{N}\Big).
\end{equation*}

\textbf{Control of \eqref{thm1} :} For the first term \eqref{thm1}, we first use a Cauchy-Schwarz argument to bound with the normalized relative Fisher information and some error term:
\begin{align}
    &\frac{\sigma}{4N} \sum_{i=1}^N \int_{\mathbb{D}^N} G^N \Big|\nabla_{x_i} \log \frac{G^N}{g^N}\Big|^2 \ud Z^N \label{fisher}\\
    +&\frac{2}{N\sigma} \sum_{i=1}^N \int_{\mathbb{D}^N} G^N \Big|\frac{1}{N} \sum_{j=1}^N \Big(m_jV(x_i-x_j)-V\ast \omega(x_i)\Big)\Big|^2|\nabla \log g(m_i,x_i)|^2 \ud Z^N.\label{error}
\end{align}
The first term \eqref{fisher} is cancelled by the dissipation term in Lemma \ref{evolution}. Therefore, we only need to control the second term \eqref{error}. Due to symmetry, we rewrite it into
\begin{equation*}
    \int_{\mathbb{D}^N} G^N\Big|\frac{1}{N}\sum_{j=1}^N \Big(m_jV(x_1-x_j)-V\ast \omega(x_1)\Big)\Big|^2|\nabla \log g(m_1,x_1)|^2 \ud Z^N.
\end{equation*}

\textbf{Step 1:} First we extract the terms with $j=1$ in the summation by
\begin{align*}
    \int_{\mathbb{D}^N} G^N \Big|-\frac{1}{N}V\ast \omega(x_1)+\frac{1}{N}\sum_{j=2}^N \Big(m_jV(x_1-x_j)-V\ast \omega(x_1)\Big)\Big|^2\nabla \log g(m_1,x_1)|^2 \ud Z^N.
\end{align*}
Using the simple Cauchy-Schwarz inequality, we bound by
\begin{align*}
    &\frac{1}{N^2}\int_{\mathbb{D}^N} G^N |V\ast \omega(x_1)|^2|\nabla \log g(m_1,x_1)|^2 \ud Z^N\\
    +&\frac{2}{N^2}\int_{\mathbb{D}^N} G^N |V\ast \omega(x_1)|\Big|\sum_{j=2}^N \Big(m_jV(x_1-x_j)-V\ast \omega(x_1)\Big)\Big||\nabla \log g(m_1,x_1)|^2 \ud Z^N\\
    +&\int_{\mathbb{D}^N} G^N \Big|\frac{1}{N}\sum_{j=2}^N \Big(m_jV(x_1-x_j)-V\ast \omega(x_1)\Big)\Big|^2|\nabla \log g(m_1,x_1)|^2 \ud Z^N.
\end{align*}
The first two terms are controlled by crude bounds
\begin{equation*}
    \frac{C}{N}\int_{\mathbb{D}^N} |\nabla \log g(m_1,x_1)|^2 G^N \ud Z^N=\frac{C}{N}\int_{\mathbb{D}} |\nabla \log g(m_1,x_1)|^2 G^{N,1} \ud z_1.
\end{equation*}
Applying the Donsker-Varadhan inequality in Lemma \ref{gibbs} with $N=1$ and $\eta=(1+t)/(2B_1)$, and using the Gaussian upper bound for $g$ as in Lemma \ref{gaussianabove} below, we have
\begin{align*}
    &\frac{C}{N}\int_{\mathbb{D}} |\nabla \log g(m_1,x_1)|^2 G^{N,1} \ud z_1\\
    \leq &\, \frac{C}{N}\Big(\frac{1}{\eta}H_1(G^{N,1}|g)+\frac{1}{\eta}\log \int_{\mathbb{D}} \exp\Big(\frac{\eta B_1}{1+t}(B_2-\log(1+t)-\log g(z))\Big) g(z) \ud z\Big)\\
    \leq &\, \frac{C}{1+t}\Big(\frac{1}{N}H_N(G^N|g^N)+\frac{1}{N}\log \int_{\mathbb{D}} \frac{C}{\sqrt{1+t}} \sqrt{g(z)} \ud z\Big)\\
    \leq &\, \frac{C}{1+t} \Big( H_N(G^N|g^N)+\frac{1}{N} \Big).
\end{align*}

\textbf{Step 2:} Now we focus on the remaining term
\begin{equation*}
    \int_{\mathbb{D}^N} \Big|\frac{1}{N}\sum_{j=2}^N \Big(m_jV(x_1-x_j)-V\ast \omega(x_1)\Big)\Big|^2|\nabla \log g(m_1,x_1)|^2G^N \ud Z^N.
\end{equation*}
We shall again divide the terms in the summation into two parts, one independent of $x_j$, the other one with uniform bound in $x_1$ after multiplied with $\nabla \log g(m_1,x_1)$:
\begin{align*}
    &\, J_1+J_2\\
    \triangleq &\, \int_{\mathbb{D}^N} \Big|\frac{1}{N}\sum_{j=2}^N \Big(m_jV(x_1-x_j)-m_jV(x_1)+\E \M \cdot V(x_1)-V\ast \omega(x_1)\Big)\Big|^2|\nabla \log g(m_1,x_1)|^2G^N \ud Z^N\\
    +&\, \int_{\mathbb{D}^N} \Big|\frac{1}{N}\sum_{j=2}^N \Big(m_jV(x_1)-\E \M \cdot V(x_1)\Big)\Big|^2|\nabla \log g(m_1,x_1)|^2G^N \ud Z^N.
\end{align*}
In the following two steps, we shall deal with $J_1$ and $J_2$ respectively. 

\textbf{Step 3:} We first give an estimate for $J_1$ using our logarithmic gradient estimate Proposition \ref{logarithmic} and the extended version of the Law of Large Numbers Theorem \ref{lln2}. We need to prove that the test function in $J_1$ is uniformly bounded in $z_1$. 

For $|x_1| \leq 2|x_j|$ we have
\begin{equation*}
    |m_jV(x_1-x_j)-m_jV(x_1)+\E \M \cdot V(x_1)-V\ast \omega(x_1)|^2 \leq C
\end{equation*}
by trivial bounds. Consequently,
\begin{align*}
    &|m_jV(x_1-x_j)-m_jV(x_1)+\E \M \cdot V(x_1)-V\ast \omega(x_1)|^2|\nabla \log g(m_1,x_1)|^2\\
    \leq &\,C\sup_{m_1,|x_1| \leq 2|x_j|} |\nabla \log g(m_1,x_1)|^2\\
    \leq &\, \frac{C}{1+t} \Big(1+\frac{|x_j|^2}{1+t} \Big),
\end{align*}
thanks to the logarithmic gradient estimate in Proposition \ref{logarithmic} below. 

For $|x_1| \geq 2|x_j|$ we divide the expression in the large bracket in $J_1$ into two parts. For the first part, we have
\begin{align*}
    |m_jV(x_1-x_j)-m_jV(x_1)| \leq C|x_j|\sup_{s \in [0,1]}|\nabla V(x_1-sv_j)| \leq C\frac{|x_j|}{|x_1|},
\end{align*}
where we use the mean-value inequality and that
\begin{equation*}
    \sup_{s \in [0,1]} |\nabla V(x_1-sy)| \leq \frac{C}{|x_1|} \text{ for any } |x_1|\geq 2|y|.
\end{equation*}
For the second part, we have
\begin{equation*}
    \E \M \cdot V(x_1)-V\ast \omega(x_1)=\int_{\mathbb{D}} m\Big(V(x_1)-V(x_1-y)\Big)g(m,y) \ud m \ud y,
\end{equation*}
which is controlled by
\begin{align*}
    &\,C\int_{|x_1| \leq 2|y|} mg(m,y)\ud m \ud y+C\int_{|x_1| \geq 2|y|} m|y|g(m,y) \sup_{s \in [0,1]}|\nabla V(x_1-sy)|\ud m \ud y\\
    \leq &\,C+C \int_{|x_1| \geq 2|y|} mg(m,y)\frac{|y|}{|x_1|} \ud m \ud y\\
    \leq &\,C\Big( 1+\frac{\sqrt{1+t}}{|x_1|} \Big),
\end{align*}
where we further use the mean value inequality and that
\begin{equation*}
    \sup_{s \in [0,1]} |\nabla V(x_1-sy)| \leq \frac{C}{|x_1|} \text{ for any } |x_1|\geq 2|y|
\end{equation*}
and the Gaussian upper bound of $g(z)$ as in Lemma \ref{gaussianabove} below. This will deduce that for $|x_1| \geq 2|x_j|$:
\begin{equation*}
    |m_jV(x_1-x_j)-m_jV(x_1)+\E \M \cdot V(x_1)-V\ast \omega(x_1)|^2 \leq C \Big(1+\frac{|x_j|^2}{|x_1|^2}+\frac{1+t}{|x_1|^2} \Big).
\end{equation*}
Consequently,
\begin{align*}
    &|m_jV(x_1-x_j)-m_jV(x_1)+\E \M \cdot V(x_1)-V\ast \omega(x_1)|^2|\nabla \log g(m_1,x_1)|^2\\
    \leq &\, \frac{C}{1+t}+C\Big( 1+\frac{|x_j|^2}{|x_1|^2}+\frac{1+t}{|x_1|^2} \Big)\frac{|x_1|^2}{(1+t)^2}\\
    \leq &\, \frac{C}{1+t} \Big(1+\frac{|x_j|^2}{1+t} \Big).
\end{align*}
To this end, we conclude that
\begin{equation*}
    J_1=\frac{1}{N}\int_{\mathbb{D}^N} \Big|\frac{1}{\sqrt{N}}\sum_{j=2}^N \psi(z_1,z_j)\Big|^2 G^N \ud Z^N,
\end{equation*}
where the test function
\begin{equation*}
    \psi(z_1,z_j)=\Big(m_jV(x_1-x_j)-m_jV(x_1)+\E \M \cdot V(x_1)-V\ast \omega(x_1)\Big)\nabla \log g(m_1,x_1)
\end{equation*}
satisfies the growth condition
\begin{equation*}
    \sup_{z_1}|\psi(z_1,z_j)| \leq C\Big(\frac{1}{\sqrt{1+t}}+\frac{|x_j|}{1+t}\Big).
\end{equation*}
It is also direct to check that $\psi$ satisfies the cancelling condition \eqref{cancel3}. Now we conclude with the Donsker-Varadhan inequality Lemma \ref{gibbs} and the extended version of the Law of Large Numbers Theorem \ref{lln2} with some $\eta(t)=C^{-1}\sqrt{1+t}$:
\begin{align*}
    J_1 &\leq \frac{1}{\eta^2} H_N(G^N|g^N)+\frac{1}{\eta^2 N}\log \int_{\mathbb{D}^N} \exp\Big(\frac{\eta^2}{N}\sum_{j,k=1}^N \psi(z_1,z_j)\psi(z_1,z_k)\Big) \ud Z^N\\
    &\leq \frac{C}{1+t}\Big(H_N(G^N|g^N)+\frac{1}{N}\Big).
\end{align*}

\textbf{Step 4:} We now give an estimate for $J_2$ using its special separation structure, namely
\begin{equation*}
    J_2\leq \frac{C}{N^2}\int_{\mathbb{D}^N}\Big|\sum_{j=2}^N (m_j-\E \M)\Big|^2\Big(\frac{1}{1+t}+\frac{|x_1|^2}{(1+t)^2}\Big) G^N \ud Z^N.
\end{equation*}
We shall first integrate with respect to the variable $z_1$, since now the term in the summation is no longer dependent on $x_j$. Denote by $P_{z_2,\cdots,z_N}^{N,1}(z_1)$ the conditional probability measure of $z_1$ under $(z_2,\cdots,z_N)$, i.e.
\begin{equation*}
    P_{z_2,\cdots,z_N}^{N,1}(z_1)=\mathbb{P}(Z_1|Z_2,\cdots,Z_N).
\end{equation*}
By definition, we can simply write that
\begin{equation*}
    P_{z_2,\cdots,z_N}^{N,1}(z_1)=\frac{G^N(z_1,\cdots,z_N)}{G^{N,N-1}(z_2,\cdots,z_N)}.
\end{equation*}
Now we are able to rewrite $J_2$ as
\begin{equation*}
    J_2 \leq \frac{C}{N^2} \int_{\mathbb{D}^{N-1}} \Big|\sum_{j=2}^N (m_j-\E \M)\Big|^2 G^{N,N-1} \ud Z^{N-1} \int_{\mathbb{D}} \Big(\frac{1}{1+t}+\frac{|x_1|^2}{(1+t)^2}\Big) P_{z_2,\cdots,z_N}^{N,1}(z_1) \ud z_1.
\end{equation*}
Using our Donsker-Varadhan inequality Lemma \ref{gibbs} for one variable with some $\eta(t)=C^{-1}(1+t)$, we have
\begin{align*}
    &\int_{\mathbb{D}} \Big(\frac{1}{1+t}+\frac{|x_1|^2}{(1+t)^2}\Big) P_{z_2,\cdots,z_N}^{N,1}(z_1) \ud z_1\\
    \leq &\,\frac{1}{1+t}+\frac{1}{\eta} H_1(P_{z_2,\cdots,z_N}^{N,1}|g)+\frac{1}{\eta} \log \int_{\mathbb{D}} \exp\Big(\frac{\eta|x_1|^2}{(1+t)^2}\Big) g(m_1,x_1) \ud z_1 \\
    \leq &\,\frac{C}{1+t} H_1(P_{z_2,\cdots,z_N}^{N,1}|g)+\frac{C}{1+t}.
\end{align*}
Hence our estimate for $J_2$ writes
\begin{align*}
    J_2 \leq &\,\frac{C}{1+t}\frac{1}{N^2} \int_{\mathbb{D}^{N-1}} \Big|\sum_{j=2}^N (m_j-\E \M)\Big|^2 G^{N,N-1} \ud z_2 \cdots \ud z_N\\
    +&\, \frac{C}{1+t} \int_{\mathbb{D}^{N-1}}  H_1(P_{z_2,\cdots,z_N}^{N,1}|g) G^{N,N-1} \ud z_2 \cdots \ud z_N.
\end{align*}
The first integral is $O(1/N)$ due to the time-independence of $\M^N$ and the fact that $G_0^{N,N-1}=g_0^{\otimes (N-1)}$ (one can also verify by another application of the Donsker-Varadhan inequality Lemma \ref{gibbs} and the Law of Large Numbers Theorem \ref{lln}, since all terms in the summation are bounded; this will also work for the non-factorized initial data case), while the second integral is controlled by $CH_N(G^N|g^N)$ using the chain rule of relative entropy Lemma \ref{chain}, with
\begin{equation*}
    K_x^1=P_{z_2,\cdots,z_N}^{N,1}, \quad K_x^2=g, \quad m_1=G^{N,N-1}, \quad m_2=g^{N-1}.
\end{equation*}
Combining all discussions above, we have successfully reached our goal
\begin{equation*}
    \frac{\ud}{\ud t} H_N(G^N|g^N) \leq \frac{C}{1+t}\Big(H_N(G^N|g^N)+\frac{1}{N}\Big).
\end{equation*}

By the Gr\"onwall inequality, we obtain an upper bound for the global relative entropy $H_N(G^N|g^N)$ from the above differential inequality:
\begin{equation*}
    H_N(G^N|g^N) \leq (1+t)^C \Big( H_N(G_0^N|g_0^N)+\frac{1}{N} \Big).
\end{equation*}
This proves Theorem \ref{thm1}.  By the sub-additivity property of the relative entropy and the condition on the initial data (see Remark \ref{initialdata}), we have
\begin{equation*}
    H_k(G^{N,k}|g^k) \leq H_N(G^N|g^N) \leq \frac{1}{N}(1+t)^C.
\end{equation*}
Using the CKP inequality, we have the control of the $L^1$ distance:
\begin{equation*}
    \|\omega^{N,k}-\omega^{\otimes k}\|_{L^1(\R^{2k})} \leq \|G^{N,k}-g^k\|_{L^1(\mathbb{D}^{k})} \leq \sqrt{2kH_k(G^{N,k}|g^k)} \leq \frac{\sqrt{k}}{\sqrt{N}} M(1+t)^M,
\end{equation*}
which completes the proof of Corollary \ref{global}.

\begin{rmk}[Sketch of Proof of the Torus Case]
    To close this subsection, we make a short sketch of the proof of the global relative entropy control on the torus case, without the independence assumption on $\M$ and $X_0$. It is easy to observe that the time evolution of the relative entropy is invariant in this case, once we change the integration domain into the torus. Hence we only need to bound the two terms \eqref{thm2} and \eqref{error}.

    If we assume on the initial limit density $g_0$ that it has some positive lower bound
    \begin{equation*}
        g_0(m,x) \geq \lambda>0,
    \end{equation*}
    which is the same as \cite{shao2024quantitative}, then it is direct to show that $\nabla \log g$ and $\nabla^2 \log g$, and thus the test functions in \eqref{thm2} and \eqref{error}, are uniformly bounded from above by some universal constant. Hence this is essentially the same condition with \cite{shao2024quantitative}. Moreover, by applying the time-uniform LSI on the limit density, as pointed out in \cite{guillin2024uniform}, one can even obtain uniform-in-time propagation of chaos results for this torus model.
\end{rmk}

\subsection{Local Relative Entropy Control}\label{control2}

In this subsection we prove our second main estimate on the right-hand side of Lemma \ref{evolutionlocal}, say the time derivative of the local relative entropy. Thanks again to the non-positive dissipation term, we only need to control the rest terms, which we recall
\begin{align*}
    &J_1+J_2+J_3\\
    \triangleq&-\frac{1}{k} \sum_{i=1}^k \frac{1}{N} \sum_{j=1}^k \int_{\mathbb{D}^k} G^{N,k} \Big(m_j K(x_i-x_j)-K \ast \omega (x_i)\Big) \cdot \nabla \log g(m_i,x_i) \ud Z^k\\
    &-\frac{1}{k}\sum_{i=1}^k \frac{N-k}{N} \int_{\mathbb{D}^{k+1}} G^{N,k+1} \Big(m_{k+1}K(x_i-x_{k+1})-K \ast \omega(x_i) \Big) \cdot \nabla \log g(m_i,x_i) \ud Z^{k+1}\\
    &+\frac{1}{k}\sum_{i=1}^k \frac{N-k}{N} \int_{\mathbb{D}^{k+1}} G^{N,k+1} \Big(m_{k+1}K(x_i-x_{k+1}) \cdot \nabla_{x_i} \log G^{N,k}\Big) \ud Z^{k+1}.
\end{align*}

\textbf{Step 1: Estimate of $J_1$.}
The first self-contained term $J_1$ is similar to the quantity that we estimate in the previous subsection. Using again the integration by parts with $K=\nabla \cdot V$ where
\begin{equation*}
    V(x)=\Big(-\frac{1}{2\pi}\arctan\frac{x_1}{x_2}\Big)\text{Id}
\end{equation*}
is bounded, we eliminate the singularity and obtain
\begin{align*}
    &\frac{1}{kN} \sum_{i,j=1}^k \int_{\mathbb{D}^k} \Big(m_jV(x_i-x_j)-V \ast \omega(x_i)\Big):\Big(\nabla_{x_i}g^k \otimes \nabla_{x_i}\frac{G^{N,k}}{g^k}\Big) \ud Z^k\\
    +&\frac{1}{kN} \sum_{i,j=1}^k \int_{\mathbb{D}^k} G^{N,k}\Big(m_jV(x_i-x_j)-V\ast \omega(x_i)\Big):\frac{\nabla_{x_i}^2 g^k}{g^k} \ud Z^k.
\end{align*}
It is easy to observe that once we extract the factor $k/N$ from the above expression, the remaining quantity has exactly the same structure as the time derivative of the global relative entropy appearing in the previous subsection, see \eqref{thm1} and \eqref{thm2}. Hence we shall simply copy the same estimate, but only make a small scaling of $k/N$ on every $\eta (t)$ appearing before, to control $J_1$ by
\begin{align*}
    &\frac{\sigma}{4}I_k(G^{N,k}|g^k)+\frac{k}{N} \cdot \frac{C}{1+t}\frac{N}{k}\Big(H_k(G^{N,k}|g^k)+\frac{1}{k}\frac{k^2}{N^2}\Big)\\
    =&\,\frac{\sigma}{4}I_k(G^{N,k}|g^k)+\frac{C}{1+t}\Big(H_k(G^{N,k}|g^k)+\frac{k}{N^2}\Big),
\end{align*}
where in the second term of the first line, the factor $N/k$ is due to the scaling of $\eta(t)$ and the factor $k^2/N^2$ is due to the extended Large Deviation Theorem \ref{deviation2}, where the upper bound $\gamma$ is rescaled corresponding to $\eta(t)$.

\textbf{Step 2: Estimate of $J_2+J_3$.} As for the rest term $J_2+J_3$, which is no longer self-contained since the integral includes the $(k+1)$-marginal function, we adapt the idea of \cite{wang2025sharp}. First, we use the symmetry property and regroup the terms into
\begin{align*}
    \frac{N-k}{N}\int_{\mathbb{D}^{k+1}} G^{N,k}\Big(m_{k+1}K(x_1-x_{k+1})\frac{G^{N,k+1}}{G^{N,k}}-K \ast \omega(x_1)\Big) \cdot \nabla_{x_1} \log \frac{G^{N,k}}{g^k} \ud Z^{k+1},
\end{align*}
where we have also used the divergence-free property of the kernel $K$. Now we first perform a Cauchy-Schwarz argument to control by
\begin{align*}
    &\frac{\sigma}{4}\int_{\mathbb{D}^k} G^{N,k}\Big|\nabla_{x_1}\log \frac{G^{N,k}}{g^k}\Big|^2 \ud Z^k\\
    +&\frac{1}{\sigma}\int_{\mathbb{D}^{k+1}} G^{N,k} \Big|m_{k+1}K(x_1-x_{k+1})\frac{G^{N,k+1}}{G^{N,k}}-K \ast \omega(x_1)\Big|^2 \ud Z^{k+1}.
\end{align*}
The first part is exactly the relative Fisher information and will be cancelled again by the non-positive dissipation term on the right-hand side of \eqref{evolutionlocal}. Therefore, we only need to focus on the second part, which we shall rewrite in
\begin{equation}\label{mainterm}
    \frac{1}{\sigma}\int_{\mathbb{D}^k} G^{N,k} \Big|\int_{\mathbb{D}} m_{k+1}K(x_1-x_{k+1})\Big(\frac{G^{N,k+1}}{G^{N,k}}-g(m_{k+1},x_{k+1})\Big) \ud z_{k+1} \Big|^2 \ud Z^k.
\end{equation}
The inner integral is in the form of the difference between two probability density functions testing on some singular function, which we cannot directly bound by some total variation norm using the CKP inequality. However, we can perform integration by parts to eliminate the singularity and make use of the relative Fisher information to deal with the extra derivatives. This has been observed by \cite{wang2025sharp} under the condition that $\nabla \log g \in L^\infty$, which makes sense in the torus case. Now we generalize this idea to our setting, where the logarithmic gradient growth estimate holds as in Proposition \ref{logarithmic} below, by introducing the weighted CKP inequality initiated by Bolley-Villani \cite{Bolley2005}. We are coming up with the following transport-type estimate.
\begin{lemma}
    For all $K=\nabla \cdot V$ with $V \in L^\infty(\R^d;\R^d \times \R^d)$ and all regular enough probability density functions $m_1,m_2$ on $\R^d$, we have for any $\lambda>0$,
    \begin{align*}
        |\langle K, m_1-m_2 \rangle| &\, \leq \| V\|_{L^\infty} \sqrt{I(m_1|m_2)}\\
        &\, +\lambda^{-1}\sqrt{1+\log \int e^{\lambda^2\|V\|_{L^\infty}^2|\nabla \log m_2|^2} \ud m_2}\sqrt{2H(m_1|m_2)}.
    \end{align*}
\end{lemma}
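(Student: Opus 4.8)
The plan is to integrate by parts to remove the singularity of $K$, then split the gradient $\nabla(m_1-m_2)$ in the asymmetric way that produces the relative Fisher information from one piece and a \emph{weighted} total variation from the other, and finally close the second piece with the Bolley--Villani weighted Csisz\'ar--Kullback--Pinsker inequality.

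Concretely, since $K=\nabla\cdot V$ with $V\in L^\infty$, for regular enough $m_1,m_2$ with sufficient decay at infinity one has
\begin{equation*}
    \langle K,m_1-m_2\rangle=-\int_{\R^d} V\,\nabla(m_1-m_2)\,\ud x,
\end{equation*}
where $V\nabla\rho$ denotes the natural matrix--vector contraction. The key algebraic identity is
\begin{equation*}
    \nabla(m_1-m_2)=m_1\,\nabla\log\frac{m_1}{m_2}+(m_1-m_2)\,\nabla\log m_2,
\end{equation*}
which isolates $\nabla\log(m_1/m_2)$ weighted by the probability density $m_1$ in the first term, and puts $\nabla\log m_2$ — a quantity controlled by Proposition \ref{logarithmic} — rather than the uncontrolled $\nabla\log m_1$, in the second.

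For the first term, boundedness of $V$ followed by Cauchy--Schwarz in $L^2(m_1\,\ud x)$ gives
\begin{equation*}
    \Big|\int_{\R^d} V\,\Big(m_1\nabla\log\frac{m_1}{m_2}\Big)\ud x\Big|\le\|V\|_\infty\int_{\R^d} m_1\Big|\nabla\log\frac{m_1}{m_2}\Big|\ud x\le\|V\|_\infty\sqrt{I(m_1|m_2)}.
\end{equation*}
For the second term, pulling out $\|V\|_\infty$ leaves the total variation of $m_1-m_2$ weighted by $\phi:=\|V\|_\infty|\nabla\log m_2|$, and here I invoke the weighted CKP inequality of Bolley--Villani \cite{Bolley2005}: for every $\lambda>0$,
\begin{equation*}
    \int_{\R^d}\phi\,|m_1-m_2|\,\ud x\le\frac{1}{\lambda}\sqrt{1+\log\int_{\R^d} e^{\lambda^2\phi^2}\,\ud m_2}\;\sqrt{2H(m_1|m_2)},
\end{equation*}
which itself follows from the Donsker--Varadhan variational formula for relative entropy applied to $h=t\,\mathrm{sign}(m_1-m_2)\,\phi$ after a routine truncation and optimization in $t>0$ (if the exponential moment diverges the asserted inequality is vacuous). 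Summing the two contributions yields exactly the claimed bound.

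The integration by parts is legitimate under the assumed regularity and decay, and the truncation step in the weighted CKP inequality is standard; the one point that genuinely matters is the asymmetric split above, since only $\nabla\log m_2$ — not $\nabla\log m_1$ — enjoys the Gaussian-type control that makes $\int e^{\lambda^2\|V\|_\infty^2|\nabla\log m_2|^2}\,\ud m_2$ finite. That finiteness is exactly what is needed when this lemma is later applied to the main term \eqref{mainterm} with $m_2=g$, drawing on Proposition \ref{logarithmic} and the Gaussian upper bound of the limit density.
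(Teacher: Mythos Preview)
Your proof is correct and follows essentially the same route as the paper: integrate by parts to pass from $K$ to $V$, use the identical asymmetric decomposition $\nabla(m_1-m_2)=m_1\nabla\log(m_1/m_2)+(m_1-m_2)\nabla\log m_2$, bound the first piece by Cauchy--Schwarz in $L^2(m_1\,\ud x)$ to produce $\|V\|_\infty\sqrt{I(m_1|m_2)}$, and close the second piece with the Bolley--Villani weighted CKP inequality. The only cosmetic difference is that the paper keeps $|V|$ inside the exponential integral rather than immediately replacing it by $\|V\|_\infty$, which is a trivially sharper intermediate bound but leads to the same stated conclusion.
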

\begin{proof}
    This result is a simple combination of the integration by parts process of \cite[Proposition 7]{wang2025sharp} and the weighted CKP inequality, see for example in \cite{Bolley2005} or \cite[Section 6]{lacker2021hierarchies}.
    \begin{align*}
        |\langle K, m_1-m_2 \rangle|=& \,|\langle V, \nabla m_1-\nabla m_2 \rangle|\\
        \leq & \, \int |V|\Big|\frac{\nabla m_1}{m_1}-\frac{\nabla m_2}{m_2}\Big| \ud m_1+\Big|\int V\frac{\nabla m_2}{m_2} \ud(m_1-m_2)\Big|\\
        \leq & \, \|V\|_{L^\infty} \sqrt{\int \Big|\nabla \log \frac{m_1}{m_2}\Big|^2 \ud m_1}+\lambda^{-1} |\langle m_1-m_2, \lambda V\nabla \log m_2 \rangle|\\
        \leq & \, \| V \|_{L^\infty} \sqrt{I(m_1|m_2)}+\lambda^{-1}\sqrt{2\Big(1+\log \int e^{\lambda^2|V|^2|\nabla \log m_2|^2} \ud m_2\Big)H(m_1|m_2)}.
    \end{align*}
\end{proof}
Now we apply the above lemma to our main term \eqref{mainterm}, with
\begin{equation*}
    m_1=\frac{G^{N,k+1}}{G^{N,k}}, \quad m_2=g
\end{equation*}
to obtain that
\begin{align*}
    & \, \frac{1}{\sigma}\int_{\mathbb{D}^k} G^{N,k} \Big|\int_{\mathbb{D}} m_{k+1}K(x_1-x_{k+1})\Big(\frac{G^{N,k+1}}{G^{N,k}}-g(m_{k+1},x_{k+1})\Big) \ud z_{k+1} \Big|^2 \ud Z^k\\
    \leq & \, \frac{1}{\sigma}\int_{\mathbb{D}^k} G^{N,k} \Big[A^2\|V\|_{L^\infty}^2 I\Big(\frac{G^{N,k+1}}{G^{N,k}}\Big|g\Big)\\
    &\, +2\lambda^{-2}\Big(1+\log \int_{\mathbb{D}} e^{\lambda^2\|V\|_{L^\infty}^2|\nabla \log g|^2} g(z_{k+1}) \ud z_{k+1}\Big) \cdot H\Big(\frac{G^{N,k+1}}{G^{N,k}}\Big|g\Big)\Big] \ud Z^k.
\end{align*}
We recall the classical chain rule or the towering property of the relative entropy and the relative Fisher information:
\begin{align*}
    \int_{\mathbb{D}^k} G^{N,k} I\Big(\frac{G^{N,k+1}}{G^{N,k}}\Big|g\Big) \ud Z^k&=I_{k+1}(G^{N,k+1}|g^{k+1}),\\
    \int_{\mathbb{D}^k} G^{N,k} H\Big(\frac{G^{N,k+1}}{G^{N,k}}\Big|g\Big) \ud Z^k&=(k+1)H_{k+1}(G^{N,k+1}|g^{k+1})-kH_k(G^{N,k}|g^k),
\end{align*}
where we emphasize that the entropy and Fisher information are normalized quantities with the index. As we have the logarithmic gradient estimate Proposition \ref{logarithmic} and the Gaussian upper bound Lemma \ref{gaussianabove}, we may choose $\lambda=\sqrt{1+t}/C$ to ensure the convergence of the integral inside the logarithm, where $C$ depends only linearly on $\|V\|_{L^\infty}$ and the initial bounds. Hence we conclude by
\begin{align*}
    J_2+J_3 & \, \leq \frac{\sigma}{4}I_k(G^{N,k}|g^k)+\frac{A^2\|V\|_{L^\infty}^2}{\sigma}I_{k+1}(G^{N,k+1}|g^{k+1})\\
    & \, +\frac{C}{1+t}\Big((k+1)H_{k+1}(G^{N,k+1}|g^{k+1})-kH_k(G^{N,k}|g^k)\Big).
\end{align*}

\textbf{Step 3: Solving the ODE Hierarchy.} Combining the two steps above, we conclude that the local relative entropy of all orders form some kind of ODE hierarchy, in the sense that the time derivative of the $k$-order entropy is controlled by the quantity containing the $(k+1)$-order entropy. Now we only need to solve this hierarchy system to conclude our desired estimate. For simplicity, denote that
\begin{equation*}
    I_k \triangleq I_k(G^{N,k}|g^k), \quad H_k \triangleq H_k(G^{N,k}|g^k).
\end{equation*}
Then we have the following estimate:
\begin{equation}
\begin{aligned}
    \frac{\ud}{\ud t}H_k \leq &\, -\frac{\sigma}{2}I_k+\frac{A^2\|V\|_{L^\infty}^2}{\sigma}I_{k+1}1_{k<N}+\frac{C}{1+t}H_k\\
    &\, +\frac{C}{1+t}((k+1)H_{k+1}-kH_k)1_{k<N}+\frac{C}{1+t}\frac{k}{N^2}.
\end{aligned}
\end{equation}
Inspired by \cite{wang2025sharp}, we introduce the following new ODE hierarchy proposition to deal with the evolution of the local relative entropy.
\begin{prop}[New ODE Hierarchy]\label{odehierarchy}
    Let $T>0$ and let $x_k, y_k \in C^1([0,T];[0,\infty))$ for any $1 \leq k \leq N$. Suppose that $x_{k+1} \geq x_k$ for all $1 \leq k \leq N-1$. Assume that there exists real numbers $c_1>c_2\geq 0$ and some non-negative time decay function $h(t)$ such that for all $t \in [0,T]$,
    \begin{equation*}
        \begin{cases}
            &\frac{\ud}{\ud t}x_k \leq -c_1y_k+c_2y_{k+1}1_{k<N}+h(t)x_k+kh(t)(x_{k+1}-x_k)1_{k<N}+\frac{k^2}{N^2}h(t),\\
            &x_k(0) \leq C\frac{k^2}{N^2}.
        \end{cases}
    \end{equation*}
    Suppose that we also have the sub-optimal \textit{a priori} estimates
    \begin{equation*}
        x_k(t) \leq Ce^{\varphi(t)}\frac{k}{N},
    \end{equation*}
    then there exists some $M>0$ such that for all $t \in [0,T]$ we have
    \begin{equation*}
        x_k(t) \leq Me^{5\varphi(t)}\frac{k^2}{N^2},
    \end{equation*}
    where the growth-in-time function is given by $\varphi(t)=C\int_0^t h(s) \ud s$.
\end{prop}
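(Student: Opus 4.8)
The plan is to run a self-improving bootstrap on the hierarchy, exactly in the spirit of \cite{wang2024sharp}, the genuinely new ingredient being that the constant-in-time rate there is replaced by the decaying $h(t)$, so that $H(t)=\int_0^t h(s)\ud s$ (hence $\varphi=CH$) has to be carried through every Gr\"onwall estimate. A convenient first step is the change of unknowns $x_k\mapsto e^{-H(t)}x_k$, $y_k\mapsto e^{-H(t)}y_k$, which removes the self-term $h(t)x_k$ while only shrinking the source $\tfrac{k^2}{N^2}h$ and not degrading the a priori bound; one factor $e^{\varphi}$ is then paid back at the end.

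Now fix the target index $k$. If $k>N/2$ then $k/N\le 2k^2/N^2$, so the sub-optimal a priori bound already gives the claim after enlarging $M$; hence assume $k\le N/2$. For such $k$ I would work with the telescoped weighted sum $V_k:=\sum_{j=k}^N\rho^{\,j-k}x_j$, $\rho$ chosen in $[c_2/c_1,1)$ (if $c_2=0$ just take $V_k=x_k$): one has $x_k\le V_k$, and $V_k$ is comparable, up to a constant $C_\rho$, to whatever polynomial bound on the $x_j$'s is currently in force. Differentiating $V_k$ and inserting the hierarchy inequality level by level, the choice $\rho\ge c_2/c_1$ forces the Fisher contributions $\sum_j\rho^{\,j-k}(-c_1y_j+c_2y_{j+1})$ to telescope into $-c_1y_k$ plus non-positive remainders, which we discard — this is precisely where the assumption $c_1>c_2$, i.e.\ the large-viscosity condition $\sigma>\sqrt{2}\,A\|V\|_\infty$, is used. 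The source contributes at most $C_\rho h\,k^2/N^2$, and the transport sum $h\sum_{j=k}^{N-1}\rho^{\,j-k}j(x_{j+1}-x_j)$ is handled by summation by parts, the monotonicity $x_{j+1}\ge x_j$ letting one discard the negative bulk terms; the only delicate leftover is the boundary term $\rho^{\,N-1-k}(N-1)x_N$, which is harmless because for $k\le N/2$ one has $\rho^{\,N-1-k}(N-1)\le B_\rho/N^2$ uniformly in $N$ (using $\sup_N N^3\rho^{N/2}<\infty$) and $x_N$ is controlled by the a priori bound. This yields a differential inequality for $V_k$ whose inhomogeneous part is linearly controlled by the currently available bound on the $x_j$'s; a Gr\"onwall step (with integrating factor built from $H$) produces an improved bound for $V_k$, hence for $x_k$, at the cost of one more $e^{\varphi}$. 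Repeating the improvement a bounded number of times pushes the $x_j$-bounds down to the optimal scale $k^2/N^2$, and summing up all the $e^{\varphi}$'s spent (the change of unknowns, the bootstrap rounds, the final Gr\"onwall, and the a priori input in the boundary term) accounts for the exponent $5$; all combinatorial bookkeeping (the choice of $\rho$, the constants $C_\rho,B_\rho$, and the Gr\"onwall constants) is done as in \cite[Proposition~6]{wang2024sharp}, with $h(t)$ in the role of the constant rate.

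The main obstacle is the transport term $kh(x_{k+1}-x_k)$: the crude estimate $x_{k+1}-x_k\le x_{k+1}$ loses a factor $k/N$, after which the iteration refuses to improve the power of $N$; one is therefore forced to use the summation-by-parts identity, the monotonicity $x_{k+1}\ge x_k$, and the a priori control at the top of the hierarchy at the same time, and — the real crux — to check that every constant produced is independent of $N$ and $k$, so that the bootstrap closes with a single universal $M$. Re-establishing this $N$-uniform closure with the general decay $h(t)$ in place of the constant rate of \cite{wang2024sharp} is where essentially all the work in proving Proposition~\ref{odehierarchy} goes.
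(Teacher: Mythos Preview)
Your summation-by-parts step for the transport term is where the proposal breaks. With $a_j=\rho^{j-k}j$, Abel's formula gives
\[
\sum_{j=k}^{N-1}a_j(x_{j+1}-x_j)=a_{N-1}x_N-a_kx_k+\sum_{j=k+1}^{N-1}(a_{j-1}-a_j)x_j,
\]
and $a_{j-1}-a_j=\rho^{\,j-1-k}\bigl(j(1-\rho)-1\bigr)$ is \emph{positive} for every $j>1/(1-\rho)$. Since $\rho\in[c_2/c_1,1)$ is fixed, as soon as $k$ exceeds this threshold \emph{all} bulk terms are positive and none can be discarded; your claim that ``the only delicate leftover is the boundary term'' is therefore false for all but boundedly many $k$. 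The surviving bulk is of size $\tfrac{1-\rho}{\rho}\sum_{j\ge k}\rho^{\,j-k}j\,x_j$, and inserting any available bound $x_j\le B(t)\,j^{\alpha}/N^{\beta}$ yields a contribution of order $B(t)\,k^{\alpha+1}/N^{\beta}$: the exponent of $N$ is never improved. Consequently your finite-round bootstrap cannot move from the a~priori scale $k/N$ down to the target $k^2/N^2$; each round raises the power of $k$ but leaves $\beta=1$ intact.

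The paper's argument is structurally different and does not use a bootstrap. It takes a \emph{polynomial} weight $z_k=\sum_{i\ge k}(i-k+i_0)^{-5}x_i$ (with $i_0$ chosen so the Fisher terms telescope), the point being that this weight is tuned so that $z_k$ inherits a hierarchy of the \emph{same shape}, namely $z_k'\le f(t)z_k+kf(t)(z_{k+1}-z_k)+\tfrac{k^2}{N^2}f(t)$, with the transport structure preserved rather than absorbed into a crude inhomogeneity. One then iterates Gr\"onwall from level $k$ all the way to $N/2$, obtaining explicit iterated integrals $A_k^l,B_k^l$ that --- and this is the main new technical input --- are computed in closed form as incomplete Beta and binomial expressions in the variable $e^{-\varphi(t)}$, valid for a general $h$. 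The sharp sub-Gaussian tail of the Beta law then delivers the $k^2/N^2$ rate. The extra factor of $N$ is thus carried entirely by the iterated-integral identity (Lemma~\ref{iteratedintegrals}); a finite self-improvement scheme has no mechanism replacing it.
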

The proof of Proposition \ref{odehierarchy} is rather technical and will be presented in the next subsection. We keep on finishing the main proof admitting the above proposition. Under the assumption for the viscosity constant $\sigma$ that $\sigma>\sqrt{2}A\|V\|_{L^\infty}$, we can apply this ODE hierarchy result for
\begin{equation*}
    x_k(t)=kH_k(t), \quad y_k(t)=kI_k(t), \quad c_1=\frac{\sigma}{2}, \quad c_2=\frac{A^2\|V\|_{L^\infty}^2}{\sigma}, \quad h(t)=\frac{C}{1+t},
\end{equation*}
to arrive at our desired main result, say
\begin{equation*}
    H_k(t) \leq M(1+t)^M\frac{k}{N^2}.
\end{equation*}

Using the CKP inequality, we have the control of the $L^1$ distance:
\begin{equation*}
    \|\omega^{N,k}-\omega^{\otimes k}\|_{L^1(\R^{2k})} \leq \|G^{N,k}-g^k\|_{L^1(\mathbb{D}^{k})} \leq \sqrt{2kH_k(G^{N,k}|g^k)} \leq \frac{k}{N} M(1+t)^M,
\end{equation*}
which completes the proof of Corollary \ref{local}.

\subsection{Solving the New ODE Hierarchy}

The ODE hierarchy Proposition \ref{odehierarchy} is somehow an extension of the similar result in \cite[Proposition 5]{wang2025sharp}, where the coefficients appearing in the ODE are either constants or exponential functions in time $t$, hence allowing S. Wang to apply the iterated integral estimates in \cite{lacker2021hierarchies}. However, in our cases, the growth in time function is more complicated from the logarithmic estimates, which forces us to deal with more general iterated integrals. 

\textbf{Step 1: Change of Variables.} We take the index
\begin{equation*}
    i_0 \triangleq \max\Big(1, \inf\Big\lbrace i>0: \frac{i^5}{(i+1)^5} \geq \frac{c_2}{c_1} \Big\rbrace \Big),
\end{equation*}
which exists since $\lim_{i \rightarrow \infty} i^5/(i+1)^5=1>c_2/c_1$. We induce the following new variable 
$z_k(t)$ as a weighted mixture of $x_i(t)$ for all $i \geq k$:
\begin{equation*}
    z_k(t) \triangleq \sum_{i=k}^N \frac{x_i(t)}{(i-k+i_0)^5}.
\end{equation*}
Repeating the estimates of \cite{wang2025sharp}, we arrive at that for any $k \leq N/2$,
\begin{equation*}
    \frac{\ud}{\ud t}z_k \leq f(t)z_k+kf(t)(z_{k+1}-z_k)+\frac{k^2}{N^2}f(t),
\end{equation*}
where $f(t)=Ch(t)$ for some universal constant $C$. Also, the initial data satisfy for any $k \leq N/2$,
\begin{equation*}
    z_k(0) \leq C\frac{k^2}{N^2}.
\end{equation*}
In order to cancel the first term $f(t)z_k$ on the right-hand side, we introduce the growth-in-time function
\begin{equation*}
    \varphi(t)=\int_0^t f(s) \ud s
\end{equation*}
and define the new variable by
\begin{equation*}
    w_k(t)=e^{-\varphi(t)}z_k(t).
\end{equation*}
Then we have
\begin{equation*}
    \frac{\ud}{\ud t}w_k \leq kf(t)(w_{k+1}-w_k)+\frac{k^2}{N^2}f(t).
\end{equation*}
Our initial condition now becomes for any $k \leq N/2$,
\begin{equation*}
    w_k(0) \leq C\frac{k^2}{N^2},
\end{equation*}
while the sub-optimal \textit{a priori} estimate reads
\begin{equation*}
    w_k(t) \leq C\varphi(t)e^{-\varphi(t)}\frac{k}{N}.
\end{equation*}

\textbf{Step 2: Iterating the Gr\"onwall's Inequality.} Now we are able to first apply the Gr\"onwall's inequlity to obtain the control of $w_k(t)$ by
\begin{equation*}
    w_k(t) \leq e^{-k\varphi(t)}w_k(0)+\int_0^t e^{-k(\varphi(t)-\varphi(s))}f(s)\Big(\frac{k^2}{N^2}+kw_{k+1}(s)\Big) \ud s.
\end{equation*}
Repeating this procedure for $w_{k+1}(s)$, we derive the next order hierarchy as
\begin{align*}
    w_k(t) &\leq e^{-k\varphi(t)}w_k(0)+\frac{k^2}{N^2}\int_0^t e^{-k(\varphi(t)-\varphi(s))}f(s) \ud s\\
    &+k\int_0^t e^{-k(\varphi(t)-\varphi(s))}f(s) e^{-(k+1)\varphi(s)}w_{k+1}(0) \ud s\\
    &+k\frac{(k+1)^2}{N^2}\int_0^t e^{-k(\varphi(t)-\varphi(s))} f(s) \int_0^s e^{-(k+1)(\varphi(s)-\varphi(r))}f(r) \ud r \ud s\\
    &+k(k+1)\int_0^t e^{-k(\varphi(t)-\varphi(s))} f(s) \int_0^s e^{-(k+1)(\varphi(s)-\varphi(r))}f(r)w_{k+2}(r) \ud r \ud s.
\end{align*}
By simple iteration of the above formula, we are now arriving at the estimate of $w_k(t)$ with the iterated integrals, i.e.
\begin{align*}
    w_k(t) &\leq \sum_{l=k}^{[N/2]-1} B_k^l(t) w_l(0)+\sum_{l=k}^{[N/2]-1} \frac{l}{N^2}A_k^l(t)+A_k^{[N/2]}(t) \sup_{0 \leq s \leq t}w_{[N/2]}(s)\\
    &\leq C\sum_{l=k}^{[N/2]-1}B_k^l(t)\frac{l^2}{N^2}+\sum_{l=k}^{[N/2]-1}\frac{l}{N^2}A_k^l(t)+C\varphi(t)e^{-\varphi(t)}A_k^{[N/2]}(t),
\end{align*}
where the coefficient terms are given by the iterated integrals for any $k \leq l$,
\begin{equation}\label{akl}
\begin{aligned}
    A_k^l(t_k)=\Big(k(k+1)\cdots l\Big)\int_0^{t_k}\cdots\int_0^{t_l} &\exp\Big(-\sum_{j=k}^l j\big(\varphi(t_j)-\varphi(t_{j+1})\big)\Big)\\ &f(t_{k+1})\cdots f(t_{l+1}) \ud t_{l+1}\cdots\ud t_{k+1},
\end{aligned}
\end{equation}
\begin{equation}\label{bkl}
    \begin{aligned}        
    B_k^l(t_k)=\Big(k(k+1)\cdots (l-1)\Big)\int_0^{t_k}\cdots\int_0^{t_{l-1}} &\exp\Big(-\sum_{j=k}^{l-1} j\big(\varphi(t_j)-\varphi(t_{j+1})\big)-l\varphi(t_l)\Big)\\ &f(t_{k+1})\cdots f(t_{l}) \ud t_{l}\cdots \ud t_{k+1},
    \end{aligned}
\end{equation}
together with $A_k^k(t)=1-e^{-k\varphi(t)}$ and $B_k^k(t)=e^{-k\varphi(t)}$. Also, we have the initial values
\begin{equation*}
    A_k^l(0)=0, \quad B_k^l(0)=
    \begin{cases}
        &1 \quad k=l,\\
        &0 \quad k<l.
    \end{cases}
\end{equation*}

\textbf{Step 3: Estimate of Iterated Integrals.} We highlight that these two iterated integrals above extend the similar expressions studied in \cite{lacker2021hierarchies,wang2025sharp}, where the growth-in-time function is in the specific structure
\begin{equation*}
    \varphi(t)=\gamma t, \quad f(t)=\gamma.
\end{equation*}
For those special cases, Lacker is able to change the formula into some convolution type expression and use the tail estimate of Beta distribution to give the desired estimate. However, in our cases, since the summation term in the exponential is no longer linear, we cannot expect the same convolution translation, but only wish to control it directly. We summarize the results into a single lemma for convenience.
\begin{lemma}[Computations of Iterated Integrals]\label{iteratedintegrals}
    We give the explicit expressions of the iterated integrals \eqref{akl} \eqref{bkl} by
    \begin{align*}
        &A_k^l(t)=\frac{l!}{(l-k)!(k-1)!}\int_{e^{-\varphi(t)}}^1 z^{k-1}(1-z)^{l-k} \ud z,\\
        &B_k^l(t)=C_{l-1}^{k-1}(1-e^{-\varphi(t)})^{l-k}e^{-k\varphi(t)}.
    \end{align*}
\end{lemma}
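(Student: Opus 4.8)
The plan is to collapse the nested structure of \eqref{akl}--\eqref{bkl} by three elementary moves: telescoping the sum in the exponent, the change of variables $s_j=\varphi(t_j)$ that flattens the time weight $f=\varphi'$, and the classical identity $\int_{0\le s_n\le\cdots\le s_1\le T}\prod_{i=1}^n g(s_i)\,\ud s=\frac1{n!}\big(\int_0^T g(s)\,\ud s\big)^n$ for a symmetric integrand over an ordered simplex. Write $T=\varphi(t)$. Since $f(s)=Ch(s)>0$ on $[0,t]$ in our application, $\varphi$ is a $C^1$ increasing bijection of $[0,t]$ onto $[0,T]$ with $\varphi(0)=0$, so the substitution is legitimate (if one prefers not to assume $f>0$, the same computation survives as a Stieltjes change of variable, or by monotone approximation).

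First I would record the telescoping: writing $\varphi_j:=\varphi(t_j)$ and using $t_k=t$,
\[
\sum_{j=k}^{l} j\big(\varphi_j-\varphi_{j+1}\big)=k\varphi_k+\sum_{j=k+1}^{l}\varphi_j-l\varphi_{l+1},\qquad \sum_{j=k}^{l-1} j\big(\varphi_j-\varphi_{j+1}\big)+l\varphi_l=k\varphi_k+\sum_{j=k+1}^{l}\varphi_j.
\]
Substituting $s_j=\varphi(t_j)$ for the integration variables (so $f(t_j)\,\ud t_j=\ud s_j$, and the constraints $0\le t_{l+1}\le\cdots\le t_{k+1}\le t$, resp. $0\le t_l\le\cdots\le t_{k+1}\le t$, become $0\le s_{l+1}\le\cdots\le s_{k+1}\le T$, resp. $0\le s_l\le\cdots\le s_{k+1}\le T$), and using $k(k+1)\cdots l=l!/(k-1)!$ and $k(k+1)\cdots(l-1)=(l-1)!/(k-1)!$, the two iterated integrals reduce to
\[
A_k^l=\frac{l!}{(k-1)!}\,e^{-kT}\int_{0\le s_{l+1}\le\cdots\le s_{k+1}\le T}\Big(\prod_{j=k+1}^{l}e^{-s_j}\Big)e^{l s_{l+1}}\,\ud s_{l+1}\cdots\ud s_{k+1},
\]
\[
B_k^l=\frac{(l-1)!}{(k-1)!}\,e^{-kT}\int_{0\le s_l\le\cdots\le s_{k+1}\le T}\prod_{j=k+1}^{l}e^{-s_j}\,\ud s_l\cdots\ud s_{k+1}.
\]

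From here $B_k^l$ is immediate: its integrand is symmetric in the $l-k$ variables, so the simplex identity with $g=e^{-\cdot}$ gives $B_k^l=\frac{(l-1)!}{(k-1)!(l-k)!}e^{-kT}(1-e^{-T})^{l-k}=C_{l-1}^{k-1}(1-e^{-\varphi(t)})^{l-k}e^{-k\varphi(t)}$. For $A_k^l$ I would integrate out the distinguished innermost variable $r:=s_{l+1}$ last: for fixed $r\in[0,T]$ the remaining $l-k$ variables run over the ordered simplex $r\le s_l\le\cdots\le s_{k+1}\le T$ with symmetric integrand, so the inner integral equals $\frac1{(l-k)!}(e^{-r}-e^{-T})^{l-k}$, whence $A_k^l=\frac{l!}{(k-1)!(l-k)!}e^{-kT}\int_0^T(e^{-r}-e^{-T})^{l-k}e^{lr}\,\ud r$; the substitution $w=e^{\,r-T}$ (so $\ud r=w^{-1}\ud w$, $e^{-r}-e^{-T}=e^{-T}(1-w)/w$, $e^{lr}=e^{lT}w^l$, and $r\in[0,T]\leftrightarrow w\in[e^{-T},1]$) converts the $r$-integral into $e^{kT}\int_{e^{-T}}^1 w^{k-1}(1-w)^{l-k}\,\ud w$, and cancelling $e^{\pm kT}$ yields exactly the asserted formula. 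The boundary cases $A_k^k=1-e^{-k\varphi(t)}$ and $B_k^k=e^{-k\varphi(t)}$ are just the $l=k$ instances (empty products), so need no separate argument.

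The whole argument is essentially bookkeeping; the only places demanding care are pinning down the telescoping indices and the ranges of the substituted variables, and checking that no boundary term is lost when the innermost variable of $A_k^l$ is integrated out. An alternative route, closer to \cite{wang2024sharp}, is to read off from \eqref{akl}--\eqref{bkl} the one-step recursions $A_k^l(t)=k\int_0^t e^{-k(\varphi(t)-\varphi(s))}f(s)A_{k+1}^l(s)\,\ud s$ and likewise for $B_k^l$ (with $A_k^k(t)=1-e^{-k\varphi(t)}$, $B_k^k(t)=e^{-k\varphi(t)}$), and then induct on $l-k$, using the substitution $z=e^{-\varphi(s)}$ and a single integration by parts to pass from step $k+1$ to step $k$.
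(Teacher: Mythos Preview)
Your proof is correct and takes a cleaner, more direct route than the paper's. The paper first derives the one--step recursion $\frac{\ud}{\ud t}B_k^l=-kf(t)B_k^l+kf(t)B_{k+1}^l$, changes unknown to $R_k^l=e^{k\varphi}B_k^l$, and inducts on $l-k$ to obtain the closed form for $B_k^l$; for $A_k^l$ it integrates the innermost variable to get $A_k^l=A_k^{l-1}-B_k^l$, sums this telescoping relation to $A_k^l=1-\sum_{j=k}^l B_k^j$, invokes the generating function $\sum_{j\ge 0}C_{j+k-1}^{k-1}x^j=(1-x)^{-k}$ to rewrite the tail as a Taylor remainder, and only then changes variables to reach the Beta--integral form. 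Your substitution $s_j=\varphi(t_j)$ (legitimate precisely because $f=\varphi'$) collapses both integrals at once to constant--weight simplex integrals, where the symmetric--simplex identity and a single substitution $w=e^{r-T}$ finish the job without induction or series manipulation. The paper's route has the minor by--product of making the relation $A_k^l=A_k^{l-1}-B_k^l$ explicit, which is used in Step~4 via the Abel transform, but that identity is immediate from your closed formulas as well. The alternative induction route you sketch at the end is essentially the paper's argument.
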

For the sake of completeness, we shall give the technical proof of Lemma \ref{iteratedintegrals} first and then apply it to complete the main proof of Proposition \ref{odehierarchy}. Those readers who are not so interested in this proof can skip it for the first time of reading.
\begin{proof}[Proof of Lemma \ref{iteratedintegrals}]
    Notice that for any $k<l$, using the expression \eqref{bkl} we have
\begin{align*}
    &\frac{\ud}{\ud t_k}B_k^l(t_k)\\
    =&\, \Big(\prod_{j=k}^{l-1} j \Big)\int_0^{t_k}\cdots\int_0^{t_{l-1}} \exp\Big(-\sum_{j=k}^{l-1} j\big(\varphi(t_j)-\varphi(t_{j+1})\big)-l\varphi(t_l)\Big)\Big(-kf(t_k)f(t_{k+1})\cdots f(t_{l})\Big)\\
    &\, +\Big(\prod_{j=k}^{l-1} j \Big)\int_0^{t_{k+1}}\cdots\int_0^{t_{l-1}} \exp\Big(-\sum_{j=k}^{l-1} j\big(\varphi(t_j)-\varphi(t_{j+1})\big)-l\varphi(t_l)\Big)f(t_{k+1}) \cdots f(t_{l})\Big|_{t_{k+1}=t_k}\\
    =&\, -kf(t_k)B_k^l(t_k)\\
    &\, +\Big(\prod_{j=k}^{l-1} j \Big)\int_0^{t_{k+1}}\cdots \int_0^{t_{l-1}} \exp\Big(-\sum_{j=k+1}^{l-1} j\big(\varphi(t_j)-\varphi(t_{j+1})\big)-l\varphi(t_{l})\Big)f(t_{k+1})\cdots f(t_{l})\Big|_{t_{k+1}=t_k}\\
    =&\, -kf(t_k)B_k^l(t_k)+kf(t_k)B_{k+1}^l(t_k).
\end{align*}
This has given us a key idea that the iterated integral itself solves another much simpler ODE hierarchy, which enables us this time to use the Gr\"onwall's inequality and direct induction to derive the explicit expressions of the complicated integrals. In order to cancel the first term on the right-hand side, we denote that
\begin{equation*}
    R_k^l(t)=e^{k\varphi(t)}B_k^l(t),
\end{equation*}
which solves by simple computations that
\begin{equation*}
    \frac{\ud}{\ud t}R_k^l=ke^{-\varphi(t)}f(t)R_{k+1}^l
\end{equation*}
for any $k<l$. The starting point of the new hierarchy reads that
\begin{equation*}
    R_k^k(t)=e^{k\varphi(t)}B_k^k(t)=1.
\end{equation*}
We prove by induction on $l-k$ that
\begin{equation}\label{rkl}
    R_k^l(t)=C_{l-1}^{k-1}(1-e^{-\varphi(t)})^{l-k}.
\end{equation}
The starting case $l-k=0$ has been verified above. Now assume that \eqref{rkl} holds for any $0 \leq l-k \leq m$ for some $m \geq 0$, then for $l-k=m+1$ we have
\begin{align*}
    R_k^l(t)=&\, R_k^l(0)+\int_0^t ke^{-\varphi(s)}f(s)R_{k+1}^l(s) \ud s\\
    =& \, kC_{l-1}^k \int_0^t (1-e^{-\varphi(s)})^{l-k-1} e^{-\varphi(s)}f(s) \ud s\\
    =&\, \frac{k}{l-k}C_{l-1}^{k}(1-e^{-\varphi(s)})^{l-k}\Big|_0^t\\
    =&\, C_{l-1}^{k-1}(1-e^{-\varphi(t)})^{l-k}.
\end{align*}
This verifies the case for $l-k=m+1$. Hence \eqref{rkl} holds for any $k \leq l$ and thus we conclude that
\begin{equation}
    B_k^l(t)=C_{l-1}^{k-1}(1-e^{-\varphi(t)})^{l-k}e^{-k\varphi(t)}.
\end{equation}
Now we return to the expression of $A_k^l(t)$ in \eqref{akl}, which reads
\begin{align*}
    A_k^l(t_k)=\Big(k(k+1)\cdots l\Big)\int_0^{t_k}\cdots \int_0^{t_l} &\, \exp\Big(-\sum_{j=k}^l j\big(\varphi(t_j)-\varphi(t_{j+1})\big)\Big)\\ &\, f(t_{k+1})\cdots f(t_{l+1}) \ud t_{l+1}\cdots \ud t_{k+1}.
\end{align*}
Integrating with respect to $t_{l+1}$, we obtain that
\begin{align*}
    A_k^l(t_k)=&\, \Big(k(k+1)\cdots (l-1)\Big)\int_0^{t_k}\cdots \int_0^{t_{l-1}} \exp\Big(-\sum_{j=k}^{l-1} j\big(\varphi(t_j)-\varphi(t_{j+1})\big)-l\varphi(t_l)\Big)\\
    &\, \qquad \qquad \qquad \qquad \qquad \qquad \Big(\exp(l\varphi(t_l))-1\Big)f(t_{k+1})\cdots f(t_l)\ud t_{l}\cdots \ud t_{k+1}\\
    =&\, A_k^{l-1}(t_k)-B_k^l(t_k).
\end{align*}
Hence for any $l>k$ we summarize that
\begin{align*}
    A_k^l(t)=&\, A_k^k(t)-\sum_{j=k+1}^l B_k^j(t)\\
    =&\, 1-e^{-k\varphi(t)}-\sum_{j=k+1}^l C_{j-1}^{k-1}(1-e^{-\varphi(t)})^{j-k}e^{-k\varphi(t)}\\
    =&\, 1-e^{-k\varphi(t)}\sum_{j=0}^{l-k} C_{j+k-1}^{k-1} (1-e^{-\varphi(t)})^{j}.
\end{align*}
Using the simple combinatorial identity
\begin{equation*}
    \sum_{j=0}^\infty C_{j+k-1}^{k-1}x^j=\frac{1}{(1-x)^k}
\end{equation*}
for any $x \in (0,1)$, we can rewrite the expression for $A_k^l(t)$ into some form of extra terms in the Taylor's expansion:
\begin{align*}
    A_k^l(t)&\, =e^{-k\varphi(t)}\sum_{j=l-k+1}^\infty C_{j+k-1}^{k-1}x^j\Big|_{x=1-e^{-\varphi(t)}}\\
    &\, =e^{-k\varphi(t)}\frac{(-1)^{l-k}}{(l-k)!}\Big(k(k+1)\cdots l\Big)\int_0^x (y-x)^{l-k}(1-y)^{-l-1} \ud y\Big|_{x=1-e^{-\varphi(t)}}.
\end{align*}
By change of variables $z=\frac{1-x}{1-y}$, one can deduce that
\begin{align*}
    A_k^l(t)=&\,e^{-k\varphi(t)} \frac{(-1)^{l-k}}{(l-k)!}(1-x)^{-k-1}\int_{1-x}^1 z^{k-1}(z-1)^{l-k} \ud z\Big|_{x=1-e^{-\varphi(t)}}\\
    =&\, \frac{l!}{(l-k)!(k-1)!}\int_{e^{-\varphi(t)}}^1 z^{k-1}(1-z)^{l-k} \ud z.
\end{align*}
This concludes the proof of Lemma \ref{iteratedintegrals}.
\end{proof}

\textbf{Step 4: Conclusion of the Proof.} Now we return to the last part of the proof of Proposition \ref{odehierarchy}. Recall that the target term writes
\begin{align*}
    w_k(t) &\leq C\sum_{l=k}^{[N/2]-1}B_k^l(t)\frac{l^2}{N^2}+\sum_{l=k}^{[N/2]-1}\frac{l}{N^2}A_k^l(t)+C\varphi(t)e^{-\varphi(t)}A_k^{[N/2]}(t)\\
    &\leq \frac{C}{N^2}\sum_{l=k}^\infty l^2B_k^l(t)+\frac{1}{N^2}\sum_{l=k}^\infty lA_k^l(t)+C\varphi(t)e^{-\varphi(t)}A_k^{[N/2]}(t).
\end{align*}
We first deal with the second term. Notice that by the Abel transform, we have
\begin{align*}
    \sum_{l=k}^\infty lA_k^l(t) \leq&\, \frac{1}{2} \sum_{l=k}^\infty \Big((l+1)^2-l^2\Big)A_k^l(t)\\
    \leq &\, \frac{1}{2}\sum_{l=k+1}^\infty l^2\Big(A_k^{l-1}(t)-A_k^l(t)\Big)\\
    \leq&\, \frac{1}{2} \sum_{l=k+1}^\infty l^2B_k^l(t).
\end{align*}
Hence it can be combined with the first term. Recall again that
\begin{equation*}
    B_k^l(t)=C_{l-1}^{k-1}(1-e^{-\varphi(t)})^{l-k}e^{-k\varphi(t)}.
\end{equation*}
Hence we compute that
\begin{align*}
    \sum_{l=k}^\infty l^2B_k^l(t)&\, =e^{-k\varphi(t)}\sum_{l=k}^\infty \Big(l(l+1)-l\Big)C_{l-1}^{k-1}(1-e^{-\varphi(t)})^{l-k}\\
    &\, =e^{-k\varphi(t)}\Big(k(k+1)\sum_{l=k}^\infty C_{l+1}^{k+1}(1-e^{-\varphi(t)})^{l-k}-k\sum_{l=k}^\infty C_l^k (1-e^{-\varphi(t)})^{l-k}\Big)\\
    &\, =e^{-k\varphi(t)}\Big(k(k+1)e^{(k+2)\varphi(t)}-ke^{(k+1)\varphi(t)}\Big)\\
    &\, \leq k(k+1)e^{2\varphi(t)}.
\end{align*}
This estimate will handle the first two terms together. As for the last non-summation term, we notice by Lemma \ref{iteratedintegrals} that
\begin{align*}
    A_k^l(t)
    =&\, \frac{l!}{(l-k)!(k-1)!}\int_{e^{-\varphi(t)}}^1 z^{k-1}(1-z)^{l-k} \ud z\\
    =&\, \mathbb{P}(Y>e^{-\varphi(t)}),
\end{align*}
where $Y$ is some random variable with the Beta distribution $B(k,l-k+1)$. By the sub-Gaussian tail estimate of the Beta distribution in \cite[Theorem 2.1]{marchal2017sub}, which reads
\begin{equation*}
    \mathbb{P}(Y>t) \leq \exp\Big(-2(l+2)\Big(t-\frac{k}{l+1}\Big)_+^2\Big).
\end{equation*}
We choose $l=[N/2]$ and consider the following two different cases respectively: if $k$ is large in the sense of $e^{-\varphi(t)} \leq \frac{2k}{l+1}$, then we have
\begin{equation*}
    A_k^l(t) \leq 1 \leq e^{2\varphi(t)}\frac{4k^2}{(l+1)^2} \leq 16e^{2\varphi(t)}\frac{k^2}{N^2};
\end{equation*}
if $k$ is small in the sense of $e^{-\varphi(t)} \geq \frac{2k}{l+1}$, then we have
\begin{equation*}
    A_k^l(t) \leq \exp\Big(-\frac{l+2}{2}e^{-2\varphi(t)}\Big) \leq 32e^{4\varphi(t)}\frac{1}{N^2} \leq 32e^{4\varphi(t)}\frac{k^2}{N^2}.
\end{equation*}
By a little bookkeeping of all the discussions above, we conclude that
\begin{equation*}
    w_k(t) \leq Ce^{4\varphi(t)}\frac{k^2}{N^2}.
\end{equation*}
Tracing back to the original variable $x_k(t)$ gives that
\begin{equation*}
    x_k(t) \leq i_0^5z_k(t) \leq Ce^{\varphi(t)}w_k(t) \leq Ce^{5\varphi(t)}\frac{k^2}{N^2}.
\end{equation*}
This finishes our proof of Proposition \ref{odehierarchy}.

\section{Regularity Estimates}\label{regularity}

After presenting the proof of our main results, we are now turning to providing the key logarithmic growth estimates of the limit density, which are essentially applied in the previous discussions. In this section, we shall first collect some basic regularity estimates of the limit density $g$ as ingredients of our logarithmic estimates. 

First we borrow some long time asymptotic decay regularity estimates of the vorticity function $\omega$ from the classical work of Kato \cite{kato1994navier}, since $\omega$ solves the vorticity formulation of the 2D Navier-Stokes equation with an $L^1$ initial data, as pointed out before. This will give us enough knowledge about the coefficient of the drift term for the limit equation \eqref{fp}.

\begin{lemma}[Asymptotic Decay]\label{omega}
    Assume that $g(t,m,x)$ solves the nonlinear Fokker-Planck limit equation \eqref{fp} with initial data $g_0 \in W_x^{2,1} \cap W_x^{2,\infty}(\mathbb{D})$, then the vorticity function $\omega(t,x)$, which is given by the expectation of $g(t,m,x)$ with respect to the circulation variable $m$:
    \begin{equation*}
        \omega(t,x)=\int_\R mg(t,m,x) \ud m,
    \end{equation*}
    satisfies the asymptotic decay estimates:
    \begin{equation*}
        \| \nabla^k K \ast \omega(t,\cdot) \|_{L^\infty} \leq M_k(1+t)^{-\frac{1+k}{2}}
    \end{equation*}
    for some constants $M_k$ for any $0 \leq k \leq 2$.
\end{lemma}
\begin{proof}
    The long time case is directly from \cite[Theorem II]{kato1994navier} and the short time case is proved by the parabolic maximum principle, see for instance \cite[Lemma 2.2]{feng2023quantitative}.
\end{proof}

Next we shall apply the classical Carlen-Loss argument \cite{carlen1996optimal} to provide some Gaussian upper bounds and $L^p$ norm bounds with respect to the variable $x$ on the limit density $g(t,m,x)$, especially giving the explicit decay rate of those quantities in time $t$, which are optimal if one checks with the easiest heat kernel case. This result is due to the fact that the drift term $K \ast \omega$ is divergence-free and satisfies the long time asymptotic decay rate as in the above Lemma \ref{omega}.

\begin{lemma}[$L^p$ Bounds and Gaussian Upper Bound]\label{gaussianabove}
    Assume that $g(t,m,x)$ solves the nonlinear Fokker-Planck limit equation \eqref{fp} with initial data $g_0 \in W_x^{2,1} \cap W_x^{2,\infty}(\mathbb{D})$. Assume as in our main results that $g_0$ satisfies the Gaussian upper bound
    \begin{equation*}
        g_0(m,x) \leq C_0 \exp(-C_0^{-1} |x|^2)
    \end{equation*}
    for some $C_0>0$. Then we have
    \begin{equation*}
        g(t,m,x) \leq \frac{C}{1+t} \exp\Big(-\frac{|x|^2}{8t+C}\Big)
    \end{equation*}
    for some $C>0$, and for any $1 \leq p \leq \infty$,
    \begin{equation*}
        \|g(t,m,\cdot)\|_{L^p} \lesssim (1+t)^{-1+\frac{1}{p}}.
    \end{equation*}
\end{lemma}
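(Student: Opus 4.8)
The plan is to treat $m$ as a passive parameter: for each fixed $m$, the function $g(t,m,\cdot)$ solves the linear advection--diffusion equation $\partial_t g + b\cdot\nabla_x g = \sigma\Delta_x g$ with drift $b(t,x) = K\ast\omega(t,x)$, which is divergence-free since $\nabla\cdot K = 0$ and which, by Lemma \ref{omega} with $k=0$, satisfies $\|b(t,\cdot)\|_\infty \leq C(t\vee 1)^{-1/2}\leq C$ for all $t\geq 0$. Writing the equation in divergence form $\partial_t g = \sigma\Delta_x g - \nabla_x\cdot(bg)$ shows that the mass $\|g(t,m,\cdot)\|_1 = \|g_0(m,\cdot)\|_1$ is conserved, while the parabolic maximum principle gives $\|g(t,m,\cdot)\|_\infty \leq \|g_0(m,\cdot)\|_\infty$; interpolating these two facts controls $\|g(t,m,\cdot)\|_p$ on $[0,1]$, and on the same bounded interval a direct comparison with an explicit Gaussian supersolution (legitimate since the drift is bounded there, cf. \cite[Lemma 2.2]{feng2023quantitative}) controls $g(t,m,x)$ pointwise. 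It therefore suffices to treat $t\geq 1$, where $1+t\leq 2t$.

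For the $L^p$ decay I would run the Nash iteration, exploiting that the divergence-free drift drops out of every energy identity: multiplying the equation by $pg^{p-1}$ and integrating by parts, the drift term equals $\int b\cdot\nabla_x(g^p) = -\int(\nabla_x\cdot b)\,g^p = 0$, so
\[
\frac{\ud}{\ud t}\|g(t,m,\cdot)\|_p^p = -\frac{4\sigma(p-1)}{p}\big\|\nabla_x\big(g^{p/2}\big)\big\|_2^2 .
\]
Applying the two-dimensional Nash inequality $\|u\|_2^2 \leq C\|\nabla u\|_2\|u\|_1$ with $u = g^{p/2}$ turns the case $p=2$ into the scalar differential inequality $\tfrac{\ud}{\ud t}\|g\|_2^2 \leq -c\|g\|_2^4/\|g_0\|_1^2$, which integrates to $\|g(t,m,\cdot)\|_2 \leq C\|g_0(m,\cdot)\|_1\,t^{-1/2}$; iterating the same estimate along $p = 2,4,8,\dots$, feeding the $L^{p/2}$ bound from the previous step into the Nash inequality, produces the ultracontractive bound $\|g(t,m,\cdot)\|_\infty \leq C\|g_0(m,\cdot)\|_1\,t^{-1}$ for $t\geq 1$. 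Interpolating with the conserved $L^1$ norm gives $\|g(t,m,\cdot)\|_p \leq C_p\,t^{-(1-1/p)}$, and gluing with the $t\leq1$ bound yields $\|g(t,m,\cdot)\|_p \leq C_p(1+t)^{-(1-1/p)}$.

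For the Gaussian upper bound I would apply the weighted version of this scheme, which is precisely the Carlen--Loss argument of \cite{carlen1996optimal}. One repeats the energy estimate for the tilted density $g\,e^{\varphi_t}$ with the time-dependent quadratic weight $\varphi_t(x) = |x|^2/(8t+C)$: the diffusion forces the weight to spread, which is exactly why the denominator must grow linearly in $t$; the initial hypothesis $g_0(m,x)\leq C_0 e^{-C_0^{-1}|x|^2}$ fixes the constant $C$ at $t=0$; and the extra term generated by the drift is controlled because $\|b(t,\cdot)\|_\infty \leq C(t\vee1)^{-1/2}$ has exactly the scaling-critical rate, so a Young-type splitting absorbs it into the good gradient term at the price of a remainder integrable in $t$. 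Combining the resulting weighted $L^\infty$ estimate with the unweighted decay $\|g(t,m,\cdot)\|_\infty\leq Ct^{-1}$ from the previous paragraph produces $g(t,m,x)\leq \tfrac{C}{1+t}\exp\!\big(-|x|^2/(8t+C)\big)$.

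The main obstacle is the Gaussian tail rather than the $L^p$ decay: because the drift decays at exactly the scaling-critical rate $t^{-1/2}$, a naive maximum-principle comparison with an explicit Gaussian supersolution fails on long time scales, the drift contribution not being dominated by the small $|x|^2$-coefficient produced by the spreading of the weight, and one genuinely needs the Carlen--Loss entropy/Gr\"onwall argument to close the estimate. The remaining effort is bookkeeping: tracking the explicit time decay of every constant, which is what the later sections require but which is not part of the classical statement.
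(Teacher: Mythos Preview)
Your proposal is correct and shares the paper's reliance on the Carlen--Loss machinery, but the execution differs in a way worth noting. For the $L^p$ decay you re-derive the Nash iteration/ultracontractivity argument; the paper simply invokes \cite[Theorem~1]{carlen1996optimal} as a black box for the long-time part and uses the maximum principle plus interpolation for the short-time part, exactly as you do. For the Gaussian tail, however, the paper does \emph{not} run a weighted energy estimate on $g\,e^{\varphi_t}$ directly; instead it quotes the Carlen--Loss optimal spatial decay \cite[Theorem~3]{carlen1996optimal}, which (using $\|K\ast\omega\|_\infty\le Ct^{-1/2}$) gives a pointwise comparison with the heat kernel,
\[
g(t,m,x)\;\le\; C\int_{\R^2}\frac{1}{t}\exp\Big(-\frac{|x-y|^2}{8t}\Big)g_0(m,y)\,\ud y,
\]
and then simply convolves this heat kernel with the assumed initial Gaussian $g_0(m,y)\le C_0 e^{-|y|^2/C_0}$. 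This decouples the semigroup estimate (which needs only the drift structure) from the initial data, making the final computation a one-line convolution of two Gaussians. Your route---a direct weighted estimate on the solution---is essentially the content of that Carlen--Loss theorem applied to the specific initial data, so it works but is more laborious; the paper's decomposition is cleaner and avoids the bookkeeping you flag as the ``remaining effort.''
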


\begin{proof}
    Note that the drift coefficient $K \ast \omega$ in \eqref{fp} is divergence-free, with
    \begin{equation*}
        \| K \ast \omega \|_{L^\infty} \lesssim t^{-\frac{1}{2}}
    \end{equation*}
    by Lemma \ref{omega}. Hence we can apply the optimal spatial decay result \cite[Theorem 3]{carlen1996optimal} with $\beta=1/2$ for instance, say
    \begin{equation*}
        g(t,m,x) \leq \frac{C}{t} \int_{\R^2} \exp \Big(-\frac{|x-y|^2}{8t}\Big) g_0(m,y) \ud y.
    \end{equation*}
    By the Gaussian upper bound assumption on $g_0$, we have
    \begin{align*}
        g(t,m,x) \leq \,& \frac{C}{t} \int_{\R^2} \exp\Big(-\frac{|x-y|^2}{8t}-\frac{|y|^2}{C_0}\Big) \ud y\\
        \leq \,& \frac{C}{t} \int_{\R^2} \exp \Big(-\frac{|x|^2}{8t+C_0}\Big) \exp\Big(-\frac{8t+C_0}{8tC_0}|y|^2\Big) \ud y\\
        \leq \,&\frac{C}{1+t} \exp\Big(-\frac{|x|^2}{8t+C_0}\Big).
    \end{align*}
    The long time $L^p$ norm control is obtained by the optimal smoothing result \cite[Theorem 1]{carlen1996optimal}, while the short time $L^p$ norm control follows from the interpolation inequality and the fact that
    \begin{equation*}
        \| g(t,\cdot)\|_{L^1} \equiv 1, \quad \| g(t,\cdot) \|_{L^\infty} \leq \|g_0 \|_{L^\infty} \leq C
    \end{equation*}
    thanks to the parabolic maximum principle. 
\end{proof}

Then we apply the estimates of the fundamental solution to diffusion processes of generalized divergence form from Osada \cite{osada1987diffusion} to provide some Gaussian lower bound on the limit density.

\begin{lemma}[Gaussian Lower Bound]\label{gaussianbelow}
    Assume that $g(t,m,x)$ solves the nonlinear Fokker-Planck limit equation \eqref{fp} with initial data $g_0 \in W_x^{2,1} \cap W_x^{2,\infty}(\mathbb{D})$. Assume as in our main results that $g_0$ satisfies the logarithmic gradient estimate
    \begin{equation*}
        |\nabla \log g_0| \lesssim 1+|x|.
    \end{equation*}
    Then we have
    \begin{equation*}
        g(t,m,x) \geq \frac{1}{C(1+t)}\exp\Big(-C\frac{|x|^2}{1+t}\Big)
    \end{equation*}
    for some $C>0$.
\end{lemma}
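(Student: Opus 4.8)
The plan is to run a parabolic comparison (maximum principle) argument for $g$, which solves the linear Fokker-Planck equation $\partial_t g + (K\ast\omega)\cdot\nabla_x g = \sigma\Delta_x g$ once $\omega$ is frozen, against an explicit Gaussian-type subsolution. First I would record the two key inputs: the drift bound $\|K\ast\omega(t,\cdot)\|_\infty \leq C(t\vee 1)^{-1/2}$ from Lemma \ref{omega}, and the fact that the initial datum, because of $|\nabla\log g_0|\lesssim 1+|x|$, is itself bounded below by a Gaussian, i.e. $g_0(m,x)\geq c\,e^{-C|x|^2}$ for suitable constants (integrate the log-gradient bound along rays from a point where $g_0$ is positive). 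The strategy is then to seek a subsolution of the form
\begin{equation*}
    \underline{g}(t,x) = \frac{a}{(1+t)^b}\exp\Big(-\frac{\mu|x|^2}{1+t} - \nu t\Big)
\end{equation*}
for constants $a,b,\mu,\nu>0$ to be tuned, and show by the comparison principle that $g\geq\underline{g}$ on $[0,T]\times\R^2$ for each fixed $m$.

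The main computation is to plug $\underline{g}$ into the operator $\partial_t - \sigma\Delta_x + (K\ast\omega)\cdot\nabla_x$ and check it is $\leq 0$. Writing $\log\underline{g} = \log a - b\log(1+t) - \mu|x|^2/(1+t) - \nu t$, one has $\nabla_x\log\underline{g} = -2\mu x/(1+t)$, so $|\nabla_x\log\underline{g}| \lesssim \mu|x|/(1+t)$. The parabolic part $\partial_t\underline{g} - \sigma\Delta_x\underline{g}$ produces, after dividing by $\underline{g}$, terms of size $-b/(1+t) + \mu|x|^2/(1+t)^2 - \nu - \sigma(4\mu^2|x|^2/(1+t)^2 - 4\mu/(1+t))$; the drift contributes $+(K\ast\omega)\cdot\nabla_x\log\underline{g}$, bounded in absolute value by $C(1+t)^{-1/2}\cdot 2\mu|x|/(1+t) = 2C\mu|x|/(1+t)^{3/2}$. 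The $|x|^2/(1+t)^2$ terms must have negative total coefficient: choosing $\mu$ small enough that $\mu - 4\sigma\mu^2 < 0$ fails, so instead one takes $\mu$ such that the good term $-4\sigma\mu^2$ dominates $+\mu$, i.e. $\mu > 1/(4\sigma)$; then the cross term $|x|/(1+t)^{3/2}$ from the drift is absorbed into the $|x|^2/(1+t)^2$ term and the constant term via Young's inequality (since $(1+t)^{-3/2} \leq \frac12[(1+t)^{-2} + (1+t)^{-1}]$, schematically). The leftover bounded-in-$x$ terms $-b/(1+t) + 4\sigma\mu/(1+t) - \nu$ are made $\leq 0$ by taking $b$ large and $\nu$ large. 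Finally $a$ is chosen small enough that $\underline{g}(0,x) = a\,e^{-\mu|x|^2} \leq g_0(m,x)$ using the Gaussian lower bound on $g_0$ just established. Since $\underline g\to 0$ at spatial infinity and $g\geq 0$, the comparison principle on $\R^2$ (justified by the Gaussian decay, or by truncation to large balls and letting the radius go to infinity) yields $g\geq\underline{g}$, and absorbing $e^{-\nu t}$ into the polynomial factor $(1+t)^{-C}$ on the time horizon $[0,T]$ — or simply keeping it, as $e^{-\nu t}\geq e^{-\nu T}$ is a constant — gives the claimed bound $g(t,m,x)\geq C^{-1}(1+t)^{-C}\exp(-C|x|^2/(1+t))$ after relabeling constants.

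The main obstacle I anticipate is twofold. First, justifying the comparison principle rigorously on the unbounded domain $\R^2$: one needs either a growth condition ruling out pathological solutions (here supplied by the Gaussian upper bound of Lemma \ref{gaussianabove}, which pins $g$ between two Gaussians) or an approximation argument on expanding balls $B_R$ with carefully chosen boundary data, checking the boundary contribution of $\underline g$ is negligible as $R\to\infty$. Second, the tuning of constants is delicate because the drift decays only like $t^{-1/2}$, which is exactly borderline: the cross term scales as $|x|(1+t)^{-3/2}$, and one must verify it is genuinely controlled by the quadratic term for \emph{all} $t\in[0,T]$, not just large $t$ — near $t=0$ the drift bound degenerates to $\|K\ast\omega\|_\infty\leq C$, and one should double-check the argument still closes there (it does, since near $t=0$ the factor $(1+t)$ is bounded above and below and everything is a bounded perturbation of the heat equation, for which a Gaussian lower bound is classical). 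Modulo these points the argument is a fairly standard supersolution/subsolution construction.
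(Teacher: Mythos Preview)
Your approach is essentially identical to the paper's: construct an explicit Gaussian subsolution and apply the parabolic comparison principle, choosing the time-dependent Gaussian variance proportional to $1/(1+t)$ and the prefactor to decay polynomially in $t$. The paper's ansatz $\varphi(t,x)=e^{\delta(t)}e^{\lambda(t)|x|^2/2}$ with $\lambda(t)=-C/(1+t)$, $\delta(t)=-C-C\log(1+t)$ is your $\underline g$ with $\mu=C/2$, $b=C$, $a=e^{-C}$, and $\nu=0$.

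One correction worth flagging: your extra factor $e^{-\nu t}$ is unnecessary and should be dropped. After applying Young's inequality to the drift cross term $2C\mu|x|/(1+t)^{3/2}$, the leftover constant-in-$x$ contribution is also $O(1/(1+t))$, so all the zero-order terms are of the form $(\text{const})/(1+t)$ and taking $b$ large alone already forces them $\leq 0$. Keeping $\nu>0$ would give $e^{-\nu t}$ in the lower bound, which cannot be absorbed into $(1+t)^{-C}$ with a $t$-independent $C$; this matters because the downstream logarithmic estimates in Proposition~\ref{logarithmic} rely on the precise form $\log g\geq -C-C\log(1+t)-C|x|^2/(1+t)$ to obtain the decay rate $(1+\log(1+t))/(1+t)$, and an additional $-\nu t$ in $\log g$ would destroy that.
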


\begin{proof}
    It is straightforward to check that the limit equation \eqref{fp} belongs to the generalized divergence form in \cite{osada1987diffusion}, since the diffusion is uniformly elliptic and the drift is divergence-free and uniformly bounded. Then according to \cite{osada1987diffusion}, we have the Gaussian lower bound for the fundamental solution, denoted by $p(s,x,t,y)$:
    \begin{equation*}
        p(s,x,t,y) \geq \frac{1}{C(t-s)} \exp \Big(-C\frac{|x-y|^2}{t-s} \Big).
    \end{equation*}
    The logarithmic gradient estimate assumption, together with the mean value inequality, yields that $g_0$ has Gaussian lower bound
    \begin{equation*}
        g_0(m,x) \geq \lambda^{-1} \exp(-\lambda|x|^2)
    \end{equation*}
    with some $\lambda>0$. We conclude that
    \begin{align*}
        g(t,m,x)=&\, \int_{\mathbb{D}} p(0,x,t,y)g_0(m,y) \ud y\\
        \geq &\, \frac{1}{C\lambda t} \int_{\mathbb{D}} \exp \Big(-C\frac{|x-y|^2}{t}-\lambda|y|^2\Big) \ud y\\
        \geq &\, \frac{1}{C\lambda t}\int_{\mathbb{D}} \exp \Big(-\frac{C\lambda}{C+\lambda t}|x|^2 \Big) \exp \Big( -\frac{C+\lambda t}{t}|y|^2 \Big) \ud y\\
        \geq &\, \frac{1}{C(1+t)} \exp \Big( -\frac{C\lambda}{C+\lambda t}|x|^2 \Big).
    \end{align*}
\end{proof}

\begin{rmk}
    It is straightforward to check that we only need to impose the Gaussian lower bound assumption on the initial data to establish Lemma \ref{gaussianbelow}. The logarithmic gradient estimate assumption is stated to keep consistency with the main result.
\end{rmk}

Finally we present some higher order Sobolev bounds on the limit density $g$ by the parabolic maximum principle. Notice that these bounds are, different from above, far from optimal in time $t$, but this will not do harm to our main results. We only need these results to verify that the higher order Sobolev bounds of $g$ are bounded in any finite time interval.

\begin{lemma}[Higher Order Sobolev Bounds]\label{high}
    Assume that $g(t,m,x)$ solves the nonlinear Fokker-Planck limit equation \eqref{fp} with initial data $g_0 \in W_x^{2,1} \cap W_x^{2,\infty}(\mathbb{D})$. Then we have for any $1 \leq p \leq \infty$,
    \begin{equation*}
        \| \nabla g(t,m, \cdot) \|_{L^p} \leq Ce^{Ct},
    \end{equation*}
    \begin{equation*}
        \| \nabla^2 g(t,m, \cdot) \|_{L^p} \leq C(1+t)e^{Ct}
    \end{equation*}
    for any $t \in [0,T]$.
\end{lemma}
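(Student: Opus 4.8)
The plan is to differentiate the linear advection--diffusion equation \eqref{fp} in the space variable and close Gr\"onwall estimates on the $L^p_x$ norms of $\nabla g$ and $\nabla^2 g$. The structural facts I would use are that the drift $b_t:=K\ast\omega_t$ is divergence-free and, by Lemma \ref{omega}, has spatial derivatives bounded uniformly on $[0,T]$, namely $\|\nabla b_t\|_\infty+\|\nabla^2 b_t\|_\infty\le C$ (since $(t\vee 1)^{-(1+k)/2}\le 1$). Because $b_t$ does not depend on $m$, for each fixed $m$ the function $g(t,m,\cdot)$ solves a linear equation with the same drift, so every constant below is uniform in $m$, depending only on $\sup_m$ of the initial Sobolev norms, which are finite since $g_0\in W^{2,1}_x\cap W^{2,\infty}_x$ forces $\nabla g_0,\nabla^2 g_0\in L^1_x\cap L^\infty_x\subset L^p_x$ for all $1\le p\le\infty$ by interpolation.

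First I would record the basic contraction inequality: if $u$ solves $\partial_t u+b_t\cdot\nabla u=\sigma\Delta u+f$ with $\nabla\cdot b_t=0$, then $\frac{\ud}{\ud t}\|u(t)\|_p\le\|f(t)\|_p$ for every $p\in[1,\infty]$ --- for $p\in(1,\infty)$ by testing against $|u|^{p-2}u$ (the drift term vanishes by incompressibility and the dissipation has the favourable sign), for $p=\infty$ by the parabolic maximum principle, and for $p=1$ by duality. Applying $\partial_{x_\ell}$ to \eqref{fp} gives $\partial_t(\partial_\ell g)+b_t\cdot\nabla(\partial_\ell g)-\sigma\Delta(\partial_\ell g)=-(\partial_\ell b_t)\cdot\nabla g$, so the inequality yields $\frac{\ud}{\ud t}\|\nabla g\|_p\le C\|\nabla b_t\|_\infty\|\nabla g\|_p\le C\|\nabla g\|_p$, and Gr\"onwall gives $\|\nabla g(t,m,\cdot)\|_p\le Ce^{Ct}$. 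Differentiating once more, $\partial_{x_\ell}\partial_{x_n}g$ solves the same transport--diffusion equation with source $-(\partial_\ell b_t)\cdot\nabla\partial_n g-(\partial_n b_t)\cdot\nabla\partial_\ell g-(\partial_\ell\partial_n b_t)\cdot\nabla g$, whose $L^p_x$ norm is bounded by $C\|\nabla^2 g\|_p+\|\nabla^2 b_t\|_\infty\|\nabla g\|_p\le C\|\nabla^2 g\|_p+Ce^{Ct}$ using the first bound; hence $\frac{\ud}{\ud t}\|\nabla^2 g\|_p\le C\|\nabla^2 g\|_p+Ce^{Ct}$, and the variation-of-constants formula gives $\|\nabla^2 g(t,m,\cdot)\|_p\le e^{Ct}\|\nabla^2 g_0\|_p+Cte^{Ct}\le C(1+t)e^{Ct}$, as claimed.

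The only points requiring care are the a priori regularity and decay needed to make these $L^p$ manipulations (and in particular the $p=1$ case) rigorous. This I would obtain from standard parabolic regularization: the Gaussian upper bounds of Lemma \ref{gaussianabove}, applied also to the equations solved by $\nabla g$ and $\nabla^2 g$ whose sources are controlled by the already-established lower-order estimates, provide the decay at each positive time, after which one argues either directly or on $[\eps,T]$ and lets $\eps\downarrow 0$. I expect this bootstrapping to be the only genuinely technical step, and it is routine precisely because no smallness or sign condition on $\omega$ enters --- the divergence-free structure of $b_t$ is all that the energy estimates need. Note that the resulting bounds are far from sharp in $t$, which is harmless since they are invoked only to guarantee finiteness of the higher Sobolev norms of $g$ on bounded time intervals.
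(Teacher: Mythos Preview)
Your proposal is correct and follows essentially the same route as the paper: differentiate \eqref{fp}, use the boundedness of $\nabla(K\ast\omega)$ and $\nabla^2(K\ast\omega)$ from Lemma \ref{omega}, and close a Gr\"onwall loop on the $L^p_x$ norms of $\nabla g$ and $\nabla^2 g$. The only cosmetic difference is that the paper argues the $p=1$ and $p=\infty$ endpoints separately (both phrased as ``parabolic maximum principle'') and then interpolates, whereas you handle all $p$ at once via the contraction inequality for advection--diffusion with divergence-free drift; the content is the same.
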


\begin{proof}
    Denote by $\mathcal{L}$ the linear operator that $\mathcal{L}=(K \ast \omega) \cdot \nabla$.
    We propagate $\partial_i g$ from \eqref{fp} that
    \begin{equation*}
        \partial_t(\partial_i g) +\mathcal{L}(\partial_i g)=\sigma \Delta (\partial_i g) -(K \ast \partial_i \omega) \cdot \nabla g.
    \end{equation*}
    By the parabolic maximum principle, the evolution of $\Vert \nabla g \Vert_\infty$ is given by 
    \begin{equation*}
        \frac{\ud}{\ud t} \Vert \nabla g \Vert_{L^\infty} \leq C \Vert K \ast \nabla \omega \Vert_{L^\infty} \|\nabla g\|_{L^\infty} \leq C \Vert \nabla g \Vert_{L^\infty},
    \end{equation*}
    and the evolution of $\Vert \nabla g \Vert_1$ is given by
    \begin{equation*}
        \frac{\ud}{\ud t} \Vert \nabla g \Vert_{L^1} \leq C \Vert K \ast \nabla \omega \Vert_{L^\infty} \Vert \nabla g \Vert_{L^1} \leq C \Vert \nabla g \Vert_{L^1}.
    \end{equation*}
    Hence the $L^p$ norm of $\nabla g$ is bounded by $Ce^{Ct}$ by the Gr\"onwall's inequality and the interpolation inequality. We next propagate $\nabla^2 g$ from \eqref{fp} that
    \begin{equation*}
        \partial_t(\nabla^2 g) +\mathcal{L}(\nabla^2 g)= \sigma \Delta (\nabla^2 g) -2(K \ast \nabla \omega) \cdot \nabla^2 g-(K \ast \nabla^2 \omega) \cdot \nabla g.
    \end{equation*}
    By the  parabolic maximum principle, the evolution of $\Vert \nabla^2 g \Vert_{L^\infty}$ is given by 
    \begin{equation*}
        \frac{\ud}{\ud t} \Vert \nabla^2 g \Vert_{L^\infty} \leq C \Vert K \ast \nabla \omega \Vert_{L^\infty} \|\nabla^2 g\|_{L^\infty}+C\|K \ast \nabla^2 \omega \|_{L^\infty} \| \nabla g \|_{L^\infty} \leq C \Vert \nabla^2 g \Vert_{L^\infty}+Ce^{Ct},
    \end{equation*}
    and the evolution of $\Vert \nabla g \Vert_1$ is given by
    \begin{equation*}
         \frac{\ud}{\ud t} \Vert \nabla^2 g \Vert_{L^1} \leq C \Vert K \ast \nabla \omega \Vert_{L^\infty} \|\nabla^2 g\|_{L^1}+C\|K \ast \nabla^2 \omega \|_{L^\infty} \| \nabla g \|_{L^1} \leq C \Vert \nabla^2 g \Vert_{L^1}+Ce^{Ct}.
    \end{equation*}
    Hence the $L^p$ norm of $\nabla^2 g$ is bounded by $C(1+t)e^{Ct}$ by the Gr\"onwall's inequality and the interpolation inequality.
\end{proof}

\section{Sharp Logarithmic Growth Estimates}\label{log}

In this section, we focus on the key logarithmic growth estimates stated before. As observed in many previous works for instance \cite{feng2023quantitative,carrillo2025relative,rosenzweig2024relative}, the major difficulty one will face when working on the whole space instead of the compact domain like the torus, is the spatial growth estimates for the derivatives of the logarithm of the limit density. Roughly speaking, when controlling the error term appearing in the time evolution of the relative entropy, we need to set up some growth estimates for quantities like $\nabla \log \bar\rho$ and $\nabla^2 \log \bar\rho$, where $\bar\rho$ generally represents the limit density. Such estimates are also crucial when considering the stability of the limit density in the sense of relative entropy, which goes beyond the range of particle approximation. 

For the torus case, as in \cite{jabin2018quantitative,guillin2024uniform,shao2024quantitative}, one may assume that the initial density has some positive lower bound $\lambda>0$. By the parabolic maximum principle, the limit density keeps the lower bound for any time, giving the trivial estimate
\begin{equation*}
    |\nabla \log \bar\rho| \leq \frac{1}{\lambda} \| \bar\rho \|_{W^{1,\infty}}, \quad |\nabla^2 \log \bar\rho| \leq \frac{1}{\lambda} \| \bar\rho \|_{W^{2,\infty}}.
\end{equation*}
This assumption even leads to a time-uniform LSI for the limit density using the classical Bakry-\'Emery arguments, which plays a crucial role in \cite{guillin2024uniform} in proving the uniform-in-time propagation of chaos. 

But for our whole space case, one can never find such positive lower bound for the density. Therefore, the authors introduced some ideas in \cite{feng2023quantitative} to set up the logarithmic gradient and Hessian estimates and started the program of extending the quantitative propagation of chaos results to the whole Euclidean space. These results were obtained using the famous Li-Yau estimate \cite{Li1986OnTP} and Hamilton estimate \cite{hamilton1993matrix,han2016upper}, and the Bernstein method with the Grigor'yan maximum principle \cite{grigor2006heat,grigoryan2009heat,li2014liyauhamilton}. A simple parabolic maximum principle method is available for the Landau equation with Maxwellian molecules \cite{carrillo2025relative}. In this section, we shall set up the logarithmic estimates for the equation \eqref{fp}, and we have simplified our method using only the Grigor'yan parabolic maximum principle, thanks to the Gaussian upper and lower bound results in Lemma \ref{gaussianabove} and Lemma \ref{gaussianbelow}. We have also optimized the time dependence of the coefficients in the estimate to be sharp in time, improving the bounds in \cite{feng2023quantitative}.

\begin{prop}[Sharp Logarithmic Growth Estimates]\label{logarithmic}
    Assume that $g(t,m,x)$ solves the nonlinear Fokker-Planck limit equation \eqref{fp} with initial data $g_0 \in W_x^{2,1} \cap W_x^{2,\infty}(\mathbb{D})$. Assume as in our main results that $g_0$ satisfies the logarithmic growth estimates
    \begin{equation*}
        |\nabla \log g_0(m,x)|^2 \lesssim 1+|x|^2,
    \end{equation*}
    \begin{equation*}
        |\nabla^2 \log g_0(m,x)| \lesssim 1+|x|^2,
    \end{equation*}
    and the Gaussian upper bound
    \begin{equation*}
        g_0(m,x) \leq C_0\exp(-C_0^{-1}|x|^2),
    \end{equation*}
    then the following logarithmic growth estimates hold for any time $t$,
    \begin{equation}\label{gradient}
        |\nabla \log g(t,m,x)|^2 \leq \frac{B_1}{1+t}(B_2-\log(1+t)-\log g(t,m,x)),
    \end{equation}
    \begin{equation}\label{hessian}
        |\nabla^2 \log g(t,m,x)| \leq \frac{B_3}{1+t}(B_4-\log(1+t)-\log g(t,m,x)),
    \end{equation}
    for some large constants $B_1,B_2,B_3,B_4$. Combining with the Gaussian lower bound in Lemma \ref{gaussianbelow}, we deduce that
    \begin{equation}\label{gradientnew}
        |\nabla \log g(t,m,x)|^2 \lesssim \frac{1}{1+t}+\frac{|x|^2}{(1+t)^2},
    \end{equation}
    \begin{equation}\label{hessiannew}
        |\nabla^2 \log g(t,m,x)| \lesssim \frac{1}{1+t}+\frac{|x|^2}{(1+t)^2}.
    \end{equation}
    These estimates are now sharp in time, since they coincide with the estimates of the heat kernel and the Oseen vortex.
\end{prop}

The basic idea of the proof of is almost the same as in \cite{feng2023quantitative}, namely applying the Bernstein's method by creating some auxiliary functions and propagating them along the parabolic equation \eqref{fp}. Using the specific structure of the coefficients of the equation, we are able to apply the Grigor'yan parabolic maximum principle \cite{grigor2006heat,grigoryan2009heat} to deduce the non-positivity of those auxiliary functions, leading to our desired estimates. However, in order to obtain the sharp time dependence of the estimate, we have modified the proof using some new techniques motivated by \cite{hamilton1993matrix}.

\subsection{Grigor'yan Parabolic Maximum Principle}

 We borrow the following version of Grigor'yan parabolic maximum principle from \cite[Theorem 11.9]{grigoryan2009heat}. This result is originally designed to obtain the uniqueness of the bounded solution to the Cauchy problem, which helps to prove the stochastic completeness of weighted manifolds. We refer to \cite{grigor2006heat,grigoryan2009heat} for the proof of Theorem \ref{gri} and some related discussions.
 
\begin{thm}[Grigor'yan]\label{gri}
    Let $(M,g,e^f\ud V)$ be a complete weighted manifold and let $F(x,t)$ be a solution of
    \begin{equation}\label{condigri1}
        \partial_t F = \Delta_f F \text{ in } M \times (0,T], \quad F(\cdot,0) = 0.
    \end{equation}
    Here $f$ is some bounded function, and the diffusion operator is given by $\Delta_f \triangleq \Delta+\langle \nabla f, \nabla \rangle$.
    Assume that for some $x_0 \in M$ and for all $r>0$,
    \begin{equation}\label{condigri2}
        \int_0^T \int_{B(x_0,r)} F^2(x,t) e^{f(x)} \ud V \ud t \leq e^{\alpha(r)}
    \end{equation}
    for some $\alpha(r)$ positive increasing function on $(0,\infty)$ such that
    \begin{equation}\label{condigri3}
        \int_0^\infty \frac{r}{\alpha(r)} \ud r=\infty.
    \end{equation}
    Then $F = 0$ on $M \times (0,T]$.
\end{thm}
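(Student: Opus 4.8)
The plan is to prove the theorem by an \emph{integrated maximum principle}: a weighted $L^2$ energy estimate with an exponentially decaying space--time weight, localized by spatial cutoffs, followed by an iteration in time. Throughout write $\rho(x)=d(x,x_0)$ and $\ud\mu=e^{f}\,\ud V$. Completeness of $(M,g)$ is used to guarantee that $\rho$ is Lipschitz with $|\nabla\rho|\le 1$ a.e., that every metric ball $B(x_0,R)$ is precompact, and that $\Delta_f$ is self-adjoint on $L^2(\ud\mu)$, so that $\int_M(\Delta_f F)\,G\,\ud\mu=-\int_M\langle\nabla F,\nabla G\rangle\,\ud\mu$ for admissible $G$. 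The hypothesis \eqref{condigri2} is exactly what makes $F\in L^2_{\mathrm{loc}}(M\times(0,T),\ud\mu\,\ud t)$, so that every integral quantity below is finite and all integrations by parts are legitimate, with no boundary terms at infinity.

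First I would establish the core energy inequality. Fix a cutoff $\varphi_R\in C_c^\infty(M)$ with $\varphi_R\equiv 1$ on $B(x_0,R/2)$, $\operatorname{supp}\varphi_R\subset B(x_0,R)$, $|\nabla\varphi_R|\le C/R$, and a Lipschitz weight $\xi(x,t)$ satisfying the Hamilton--Jacobi differential inequality $\partial_t\xi+\tfrac12|\nabla\xi|^2\le 0$ on $M\times(0,T)$. Multiplying $\partial_t F=\Delta_f F$ by $F\varphi_R^2 e^{\xi}$, integrating against $\ud\mu$, integrating by parts and completing the square in $\nabla F+\tfrac12 F\nabla\xi$, the Hamilton--Jacobi inequality forces all bulk terms to be nonpositive, and only the cutoff error survives, giving for a.e.\ $t$
\begin{equation*}
    \frac{\ud}{\ud t}\int_M F^2\varphi_R^2 e^{\xi}\,\ud\mu\ \le\ \frac{C}{R^2}\int_{B(x_0,R)\setminus B(x_0,R/2)}F^2 e^{\xi}\,\ud\mu .
\end{equation*}
Integrating in time from $0$ to $t$ and using $F(\cdot,0)=0$ to annihilate the initial term yields
\begin{equation*}
    \int_{B(x_0,R/2)}F^2(\cdot,t)\,e^{\xi(\cdot,t)}\,\ud\mu\ \le\ \frac{C}{R^2}\int_0^t\!\!\int_{B(x_0,R)\setminus B(x_0,R/2)}F^2 e^{\xi}\,\ud\mu\,\ud s ,
\end{equation*}
and the right-hand side lives on the far annulus $\{\rho\ge R/2\}$, where $e^{\xi}$ is small.

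The decisive step is to choose the weight so that this smallness beats the a priori growth \eqref{condigri2}. In the regime $\alpha(r)=O(r^2)$ the naive Gaussian weight $\xi(x,t)=-\rho(x)^2/(D(\tau-t))$ with $D\ge 2$ already satisfies the Hamilton--Jacobi inequality, and on $\{\rho\ge R/2\}$ one has $e^{\xi}\le e^{-R^2/(4D\tau)}$; bounding the remaining integral by $\int_0^T\!\int_{B(x_0,R)}F^2\,\ud\mu\,\ud s\le e^{\alpha(R)}$ and letting $R\to\infty$ forces $\int_{B(x_0,R/2)}F^2(\cdot,t)e^{\xi}\,\ud\mu\to 0$, hence $F(\cdot,t)\equiv 0$ for $t<\tau$, provided $\tau$ is small enough; a time iteration over steps of this fixed length then exhausts $[0,T]$. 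For a general $\alpha$ obeying \eqref{condigri3} the Gaussian must be replaced by a weight $\xi(x,t)=-\eta(\rho(x),t)$ built from a supersolution of the one-dimensional Hamilton--Jacobi equation $\partial_t\eta=\tfrac12(\partial_r\eta)^2$ whose initial profile dominates $\alpha(r)$; the condition $\int^{\infty}r\,\ud r/\alpha(r)=\infty$ is precisely the Osgood/Dini-type condition guaranteeing that such a weight exists on a time interval of positive length --- equivalently, that the characteristics of this flow do not focus instantaneously. With the weight in hand, the same letting-$R\to\infty$ argument gives $F\equiv 0$ on a short time interval, and iterating in time (each step again starts from zero initial data and the \emph{same} bound \eqref{condigri2}) covers $[0,T]$, so $F\equiv 0$ on $M\times(0,T]$.

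I expect the main obstacle to be exactly this weight construction: producing $\xi$ adapted to $\alpha$ so that the a priori bound \eqref{condigri2} can be absorbed, and quantifying the length of the time window on which such a $\xi$ is available --- this is the single place where \eqref{condigri3} is indispensable. By comparison, the energy computation (completing the square, absorbing the cutoff terms via Young's inequality) and the subsequent time-iteration are routine. The detailed construction and the sharpness of \eqref{condigri3} are carried out in \cite{grigor2006heat,grigoryan2009heat}.
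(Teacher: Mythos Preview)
The paper does not actually prove this theorem: it is quoted verbatim from Grigor'yan's book and the proof is simply referred to \cite{grigor2006heat,grigoryan2009heat}. So there is no ``paper's own proof'' to compare against beyond the citation.

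That said, your sketch is a faithful outline of Grigor'yan's argument in \cite[Theorem~11.9]{grigoryan2009heat}: the weighted energy identity with a Hamilton--Jacobi weight, the cutoff producing an annular error, and the time iteration are exactly the ingredients used there. One point of detail is worth flagging. For general $\alpha$ satisfying \eqref{condigri3}, Grigor'yan does \emph{not} build a single global weight $\xi=-\eta(\rho,t)$ from a one-dimensional Hamilton--Jacobi supersolution adapted to $\alpha$, as you describe. Instead he keeps the Gaussian weight and iterates over a \emph{sequence of radii} $r_k\uparrow\infty$: at each step the energy estimate is applied between $B(x_0,r_k)$ and $B(x_0,r_{k+1})$, yielding a usable time window of length comparable to $(r_{k+1}-r_k)^2/\alpha(r_{k+1})$. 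The Osgood condition \eqref{condigri3} is precisely what guarantees that these time windows sum to $+\infty$, so that the iteration exhausts $[0,T]$. Your Hamilton--Jacobi reformulation is morally equivalent and would also work, but it is not how the cited proof is organized, and the explicit radius-iteration is what makes the role of \eqref{condigri3} transparent.
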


By examining the proof of Theorem \ref{gri}, we notice, as also shown in Y. Li \cite{li2014liyauhamilton}, that the result has the following variant, which we will mainly refer to later.

\begin{thm}[Grigor'yan, an extended version]\label{karp}
    Let $(M,g,e^f\ud V)$ be a complete weighted manifold, and let $F(x,t)$ be a solution of
    \begin{equation}\label{condi1}
        \partial_t F \leq \Delta_f F \text{ in } M \times (0,T], \quad F(\cdot,0) \leq 0.
    \end{equation}
    Assume that for some $x_0 \in M$ and for all $r>0$,
    \begin{equation}\label{condi2}
        \int_0^T \int_{B(x_0,r)} F_+^2(x,t) e^{f(x)} \ud V \ud t \leq e^{\alpha(r)}
    \end{equation}
    for some $\alpha(r)$ positive increasing function on $(0,\infty)$ such that
    \begin{equation}\label{condi3}
        \int_0^\infty \frac{r}{\alpha(r)} \ud r=\infty.
    \end{equation}
    Then $F \leq 0$ on $M \times (0,T]$.
\end{thm}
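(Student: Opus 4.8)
The plan is to re-run Grigor'yan's proof of Theorem \ref{gri} (that is, \cite[Theorem 11.9]{grigoryan2009heat}) essentially verbatim, with $F$ replaced by its positive part, checking that the equality $\partial_t F=\Delta_f F$ is used only through pairing against a \emph{nonnegative} test function, so that it may be relaxed to $\partial_t F\le\Delta_f F$ without spoiling the direction of any inequality. \textbf{Step 1 (reduction).} Set $u:=F_+=\max(F,0)$. Since $F(\cdot,0)\le 0$ and $u\ge 0$, we have $u(\cdot,0)\equiv 0$; by Kato's inequality the positive part of a subsolution of $\partial_t-\Delta_f$ is again a weak subsolution, so $\partial_t u\le\Delta_f u$ on $M\times(0,T]$ in the distributional sense. (In every situation in which we invoke Theorem \ref{karp} the function $F$ is smooth, so the regularity needed for Kato's inequality and for the integrations by parts below is not an issue.) Finally $\int_0^T\int_{B(x_0,r)}u^2 e^f\ud V\ud t=\int_0^T\int_{B(x_0,r)}F_+^2 e^f\ud V\ud t\le e^{\alpha(r)}$, which is exactly the $L^2$-growth hypothesis of Grigor'yan's theorem. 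Hence it suffices to prove that a nonnegative weak subsolution $u$ with $u(\cdot,0)\equiv 0$ and this growth bound vanishes identically.

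\textbf{Step 2 (weighted energy estimate).} Fix $T'\in(0,T]$ and $x_0\in M$, put $\rho(\cdot)=d(\cdot,x_0)$, and let $\varphi_R$ be a Lipschitz spatial cutoff with $\varphi_R\equiv1$ on $B(x_0,R)$, supported in $B(x_0,2R)$, and $|\nabla\varphi_R|\le C/R$. Following Grigor'yan, one tests the inequality $\partial_t u\le\Delta_f u$ against the nonnegative function $u\,\varphi_R^2\,h^2$, where $h(x,t)=\exp\!\big(-\rho(x)^2/(4(T'+\delta-t))\big)$ is the moving Gaussian weight (a small parameter $\delta>0$), integrates over $M\times(0,s)$ for $s\le T'$, and integrates by parts with respect to the measure $e^f\ud V$. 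Two features make this work, both untouched by the passage from $=$ to $\le$: the $t=0$ boundary term drops because $u(\cdot,0)\equiv 0$; and $h$ satisfies $\partial_t h+|\nabla h|^2/h\le 0$ (it is, up to normalization, a backward heat kernel on the model), so all terms generated by $h$ cancel and one is left only with terms supported in the shell $B(x_0,2R)\setminus B(x_0,R)$, where $\rho\ge R$ makes $h$ super-exponentially small. The outcome is a weighted energy inequality controlling $\int_{B(x_0,R)}u^2(\cdot,s)h^2 e^f\ud V$ and $\int_0^s\int|\nabla u|^2\varphi_R^2 h^2 e^f$ by $\frac{C}{R^2}$ times the space--time integral of $u^2 h^2 e^f$ over that shell.

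\textbf{Step 3 (iteration and conclusion).} A single scale is not enough, because the hypothesis $\int_0^\infty r/\alpha(r)\,\ud r=\infty$ allows $\alpha(r)$ to grow almost as fast as $r^2\log r$. One therefore iterates the Step 2 estimate along a sequence of shells (and, if needed, subdivides $[0,T']$ into short time steps on which the Gaussian gain beats $e^{\alpha(r)}$), exactly as in Grigor'yan's proof: this produces a recursion for $\Psi(r):=\int_0^{T'}\int_{B(x_0,r)}u^2 e^f\ud V\ud t$, and the divergence $\int_0^\infty r/\alpha(r)\,\ud r=\infty$ is precisely the condition under which that recursion, together with $\Psi(r)\le e^{\alpha(r)}$, forces $u(\cdot,s)\equiv 0$ on $B(x_0,R)$ for all $s\in(0,T']$. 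Letting $R\to\infty$, then letting $x_0$ and $T'\in(0,T]$ vary, gives $F\le 0$ on $M\times(0,T]$.

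\textbf{Expected main difficulty.} No new idea is required; the work is the faithful transcription of Grigor'yan's weighted-energy identity and of his shell/time iteration after the substitutions $F\rightsquigarrow F_+$ and $=\ \rightsquigarrow\ \le$. The single point to watch is that these substitutions preserve the sign of every inequality — which is ensured by always testing against the nonnegative function $u\,\varphi_R^2 h^2$ rather than against $F$ times a weight — together with the fact that $F_+$ carries the Sobolev regularity needed for Kato's inequality and the integrations by parts, which is automatic here since Theorem \ref{karp} will only be applied to smooth functions.
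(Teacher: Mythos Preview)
Your proposal is correct and follows essentially the same approach as the paper: reduce to $u=F_+$ via Kato's inequality (the paper phrases this as ``$\phi(x)=x_+$ is non-decreasing, convex, continuous and piece-wise smooth''), then observe that Grigor'yan's original argument survives the passage from $=$ to $\le$ because the PDE enters only through pairing against a nonnegative test function (the paper points specifically to equation (11.37) in \cite{grigoryan2009heat} as the single place this matters). Your Steps~2--3 spell out more of the machinery of that argument than the paper does, but the strategy is identical.
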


For the sake of completeness, we explain here why Theorem \ref{karp} is valid. Indeed, since $\phi(x)=x_+$ is non-decreasing, convex, continuous and piece-wise smooth, the condition (\ref{condi1}) yields that
\begin{equation*}
    \partial_t F_+ \leq \Delta_f F_+ \text{ in } M \times (0,T], \quad F_+(\cdot,0) = 0.
\end{equation*}
Also in Theorem \ref{gri} the result still holds for $F$ satisfying $\partial_t F\leq \Delta_f F$ and $F \geq 0$, since the only step using the PDE itself is the equation \cite[(11.37)]{grigoryan2009heat}, which turns into an inequality in the new case and still yields the following steps. Therefore, we apply Theorem \ref{gri} to $F_+$ and obtain Theorem \ref{karp}. Furthermore, it is straightforward to check from the original proof that replacing $\Delta$ by $\sigma \Delta$ in the definition of $\Delta_f$ will not change the result as long as $\sigma>0$. In our case, it is convenient to choose
\begin{equation*}
    f=-h \ast \omega,
\end{equation*}
where
\begin{equation*}
    h(x)=-\frac{1}{2\pi}\arctan\frac{x_1}{x_2}
\end{equation*}
is bounded and satisfies $K=\nabla h$.

\subsection{Logarithmic Gradient Estimate}

Now we turn to the proof of our logarithmic estimates, i.e.  Proposition \ref{logarithmic}. In this subsection, we focus on the gradient estimate \eqref{gradient}, and the Hessian estimate \eqref{hessian} will be discussed in the next subsection. 

First, we provide a technical lemma concerning the propagation results of some quantities.

\begin{lemma}\label{cal} 
Assume that $g$ solves the nonlinear Fokker-Planck limit equation \eqref{fp} and recall that $\Delta_f=\sigma \Delta+\langle \nabla f, \nabla \rangle$ with $\nabla f=-K \ast \omega$. Then we have
\begin{align}
    &(\partial_t -\Delta_f) \Big(\frac{|\nabla g|^2}{g}\Big)=-\frac{2\sigma}{g}\Big|\partial_{ij} g-\frac{\partial_i g\partial_j g}{g}\Big|^2-\frac{2}{g} \nabla g \cdot \nabla (K\ast \omega) \cdot \nabla g \leq \frac{2M_1}{1+t} \frac{|\nabla g|^2}{g},\label{nablagsquare}\\
    &(\partial_t-\Delta_f) (g\log g)=-\sigma\frac{|\nabla g|^2}{g}.\label{glogg}
\end{align}
We recall that the constant $M_1$ is defined in Lemma \ref{omega}.
\end{lemma}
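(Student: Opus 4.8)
The plan is to verify both identities by direct computation, propagating the relevant quantities along the linear parabolic operator $\partial_t - \Delta_f$ associated with the drift-diffusion equation \eqref{fp}. Throughout, recall that $g$ solves $\partial_t g = \sigma \Delta g - (K \ast \omega) \cdot \nabla g$, i.e. $(\partial_t - \Delta_f) g = 0$ where $\Delta_f = \sigma \Delta + \langle \nabla f, \nabla \cdot \rangle$ with $\nabla f = -K \ast \omega$; I will use summation convention with $\partial_i = \partial_{x_i}$.

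First I would establish \eqref{glogg}, which is the simpler of the two. Writing $u = g \log g$, one computes $\partial_t u = (\log g + 1) \partial_t g$, $\nabla u = (\log g + 1) \nabla g$, and $\Delta u = (\log g + 1)\Delta g + \frac{|\nabla g|^2}{g}$. Combining these, $(\partial_t - \Delta_f) u = (\log g + 1)(\partial_t - \Delta_f) g - \sigma \frac{|\nabla g|^2}{g} = -\sigma \frac{|\nabla g|^2}{g}$, since the first term vanishes. This requires only the chain rule and the fact that $f$ depends on $(t,x)$ but not on the unknown, so the first-order drift term passes through the multiplication by $\log g + 1$ cleanly.

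Next I would turn to \eqref{nablagsquare}. Set $v = \frac{|\nabla g|^2}{g}$. The key is the classical Bochner-type identity: differentiating the equation for $g$ in $x_k$ gives an equation for $\partial_k g$, namely $(\partial_t - \Delta_f)(\partial_k g) = -\partial_k(K\ast\omega)\cdot \nabla g$ (the drift term produces an extra commutator because $K\ast\omega$ depends on $x$). From here one computes $(\partial_t - \Delta_f)|\nabla g|^2 = 2\,\partial_k g\,(\partial_t - \Delta_f)(\partial_k g) - 2\sigma|\nabla^2 g|^2 = -2\sigma|\nabla^2 g|^2 - 2\nabla g \cdot \nabla(K\ast\omega)\cdot\nabla g$, using that $\Delta(|\nabla g|^2) = 2\nabla g\cdot\nabla(\Delta g) + 2|\nabla^2 g|^2$ and that the first-order part of $\Delta_f$ satisfies a Leibniz rule. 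Then, applying the quotient rule $(\partial_t - \Delta_f)(\frac{N}{g}) = \frac{1}{g}(\partial_t - \Delta_f)N - \frac{N}{g^2}(\partial_t - \Delta_f)g + \frac{2}{g}\nabla g \cdot \nabla(\frac{N}{g}) \cdot(\text{with the }\sigma)$ — more precisely, for the operator $\Delta_f$ one has the identity $\Delta_f(N/g) = \frac{1}{g}\Delta_f N - \frac{N}{g^2}\Delta_f g - \frac{2\sigma}{g}\nabla g \cdot \nabla(N/g)$ — with $N = |\nabla g|^2$ and using $\Delta_f g = \partial_t g$, the terms reorganize into $-\frac{2\sigma}{g}|\nabla^2 g|^2 + \frac{2\sigma}{g}\nabla g\cdot\nabla v - \frac{2}{g}\nabla g\cdot\nabla(K\ast\omega)\cdot\nabla g + (\text{terms from }\partial_t)$. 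The algebraic miracle is that $\frac{2\sigma}{g}\nabla g \cdot \nabla v$ combines with $-\frac{2\sigma}{g}|\nabla^2 g|^2$ and the $-\frac{N}{g^2}\partial_t g$ piece to complete the square $-\frac{2\sigma}{g}|\partial_{ij}g - \frac{\partial_i g \partial_j g}{g}|^2$; I would verify this by expanding $|\partial_{ij}g - \frac{\partial_i g\partial_j g}{g}|^2 = |\nabla^2 g|^2 - \frac{2}{g}\partial_{ij}g\,\partial_i g\,\partial_j g + \frac{|\nabla g|^4}{g^2}$ and matching term by term.

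Finally, for the stated inequality, I would bound $-\frac{2\sigma}{g}|\partial_{ij}g - \frac{\partial_ig\partial_jg}{g}|^2 \le 0$ and estimate the drift commutator term by $\big|\frac{2}{g}\nabla g\cdot\nabla(K\ast\omega)\cdot\nabla g\big| \le 2\|\nabla(K\ast\omega)\|_\infty \frac{|\nabla g|^2}{g} \le \frac{C}{1+t}\frac{|\nabla g|^2}{g}$, invoking the asymptotic decay estimate $\|\nabla(K\ast\omega)(t,\cdot)\|_\infty \le C(t\vee 1)^{-1}$ from Lemma \ref{omega} with $k=1$. I expect the main obstacle to be bookkeeping the quotient-rule identity for $\Delta_f$ and confirming that all the first-order drift contributions cancel against each other so that only the clean commutator $-\frac{2}{g}\nabla g\cdot\nabla(K\ast\omega)\cdot\nabla g$ survives; this is purely a matter of careful expansion, with no analytic subtlety, since $f$ is smooth and bounded and $g$ is smooth and strictly positive by the Gaussian lower bound Lemma \ref{gaussianbelow}.
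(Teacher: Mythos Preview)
Your proposal is correct and is precisely the kind of direct Bochner-type computation the paper has in mind: the paper itself omits the proof entirely, stating that it is ``nothing but technical calculations'' and referring to \cite[Section~6]{feng2023quantitative} for details. Your derivation of \eqref{glogg} via the chain rule and of \eqref{nablagsquare} via the commutator identity $(\partial_t-\Delta_f)(\partial_k g)=-\partial_k(K\ast\omega)\cdot\nabla g$, the quotient rule $\Delta_f(N/g)=\tfrac{1}{g}\Delta_f N-\tfrac{N}{g^2}\Delta_f g-\tfrac{2\sigma}{g}\nabla g\cdot\nabla(N/g)$, and the completion of the square are all correct, as is your use of Lemma~\ref{omega} with $k=1$ for the final inequality.
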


The proof of Lemma \ref{cal} is nothing but technical computations, which we omit here and refer to \cite[Section 6]{feng2023quantitative} for details.

\textbf{Short Time Estimate:} We construct the first auxiliary function to be propagated:
\begin{equation*} 
    F(t,x)=\frac{|\nabla g|^2}{g}+\frac{C_1}{\sigma}g\log g-C_2 g
\end{equation*}
with some constants $C_1,C_2$ to be determined. From Lemma \ref{cal} above, the propagation of the auxiliary function reads
\begin{equation*}
    (\partial_t-\Delta_f)F \leq \frac{2M_1}{1+t}\frac{|\nabla g|^2}{g}-C_1\frac{|\nabla g|^2}{g} \leq 0
\end{equation*}
as long as $C_1 \geq 2M_1$. The initial value of $F$ simply reads
\begin{equation*}
    F(0,x)=\frac{|\nabla g_0|^2}{g_0}+\frac{C_1}{\sigma}g_0\log g_0-C_2 g_0.
\end{equation*}
Hence, $F(0,\cdot) \leq 0$ is equivalent to
\begin{equation*}
    |\nabla \log g_0|^2+\frac{C_1}{\sigma}\log g_0 \leq C_2.
\end{equation*}
We recall the initial assumptions:
\begin{equation*}
    |\nabla \log g_0(m,x)| \lesssim 1+|x|,\quad g_0(m,x) \leq C_0 \exp(-C_0^{-1}|x|^2),
\end{equation*}
which allow us to choose $C_1$ large enough to eliminate the spatial growth in $|\nabla \log g_0|^2$ and then choose $C_2$ large enough to conclude that $F(0,\cdot) \leq 0$. Now we only have to check the growth assumption \eqref{condi2}. Recall that $\nabla g \in L^\infty$ and $g \in L^\infty$ from Lemma \ref{high}, and that
\begin{equation*}
    g(x,t) \geq \frac{1}{C(1+t)}\exp\Big(-C\frac{|x|^2}{1+t}\Big)
\end{equation*}
from the Gaussian lower bound Lemma \ref{gaussianbelow}. Consequently, we check the growth assumption by
\begin{equation*}
    \int_0^T \int_{B_r} F_+^2(t,x) e^{f(x)} \ud V\ud t \leq Te^{C_T(1+r^2)} \int_{B_r} e^{f(x)} \ud x \leq CTr^2e^{C_T(1+r^2)}
\end{equation*}
since $f$ is bounded. Hence we may choose $\alpha(r)=C_Tr^2(1+|\log r|)$ which satisfies the Osgood type condition (\ref{condi3}). Applying the Grigor'yan maximum principle Theorem \ref{karp} we arrive at $F \leq 0$, or
\begin{equation}\label{F}
    |\nabla \log g|^2+\frac{C_1}{\sigma}\log g \leq C_2.
\end{equation}

\textbf{Long Time Estimate:} In order to obtain the logarithmic gradient estimate \eqref{gradient} with optimal time dependence, we need a small trick motivated by \cite{hamilton1993matrix}. For any fixed time $\tau>0$, we define the following time-rescaling function $\Tilde{g}$ by pushing forward the time variable by $\tau/2$:
\begin{equation*}
    \Tilde{g}(t,m,x)=g(t+\frac{\tau}{2},m,x),
\end{equation*}
which also solves the same nonlinear Fokker-Planck equation \eqref{fp} with the initial data changing into $\Tilde{g}(0, \cdot)=g(\frac{\tau}{2},\cdot)$. Hence the corresponding asymptotic decay estimates read as
\begin{equation*}
    \|\nabla^k \Tilde{g}\|_{L^\infty} \leq M_k(1+\frac{\tau}{2}+t)^{-1-\frac{k}{2}}, \quad   \|\nabla^k K \ast \Tilde{g}\|_{L^\infty} \leq M_k(1+\frac{\tau}{2}+t)^{-\frac{1+k}{2}}.
\end{equation*} 
We also need to construct another auxiliary function to be propagated:
\begin{equation*}
    F(t,x)=\phi \frac{|\nabla \Tilde{g}|^2}{\Tilde{g}}+\Tilde{g} 
    \log \Tilde{g} -C_4\Tilde{g},
\end{equation*}
for some $\phi(t)$ and constant $C_4$ to be determined. We deduce that the propagation of the new auxiliary function reads
\begin{equation*}
    (\partial_t-\Delta_f)F \leq \Big(\phi^\prime +\frac{2M_1}{1+t}\phi-\sigma\Big) \frac{|\nabla \Tilde{g}|^2}{\Tilde{g}}.
\end{equation*}
Hence, if we choose $C_4=\log \frac{2M_0}{\tau}$ and $\phi$ satisfying
\begin{equation*}
    \phi^\prime+\frac{2M_1}{1+t}\phi-\sigma \leq 0, \quad \phi(0)=0,
\end{equation*}
then the initial assumption \eqref{condi1} is satisfied immediately. Obviously $\phi(t)=\frac{\sigma t}{2M_1+1}$ works. The rest of the assumptions of Theorem \ref{karp} follow as in the short time case. Therefore, we arrive at $F \leq 0$, or
\begin{equation*}
    |\nabla \log \Tilde{g}|^2+\frac{2M_1+1}{\sigma t} \log \Tilde{g} \leq \frac{C_4(2M_1+1)}{\sigma t}.
\end{equation*}
Take $t=\tau/2$ and return to the original function $g$:
\begin{equation}\label{FF}
    |\nabla \log g(\tau,m,x)|^2+\frac{4M_1+2}{\sigma \tau} \log g(\tau,m,x)\leq \frac{(4M_1+2)(\log 2M_0-\log \tau)}{\sigma\tau}.
\end{equation}

Combining \eqref{F} for $t \leq 1$ and \eqref{FF} for $t \geq 1$, we can choose some appropriate large constants $B_1,B_2$ to conclude the logarithmic gradient estimate \eqref{gradient}.

\subsection{Logarithmic Hessian Estimate}

Now we turn to the Hessian estimate \eqref{hessian} in this subsection. First we also provide a technical lemma concerning the propagation results of some quantities.

\begin{lemma}\label{cal2}  
    Assume that $g$ solves the nonlinear Fokker-Planck equation \eqref{fp} and recall that $\Delta_f=\sigma \Delta+\langle \nabla f, \nabla \rangle$ with $\nabla f =-K \ast \omega$. Then
    \begin{align}
        \label{hessiang}&(\partial_t -\Delta_f) \Big(\frac{|\nabla^2 g|^2}{g}\Big) \leq \frac{4M_1+M_2}{1+t}\frac{|\nabla^2 g |^2}{g}+\frac{M_2}{(1+t)^2}\frac{|\nabla g|^2}{g}.\\
        \label{gloggsquare}&(\partial_t -\Delta_f) \Big(g(\log g)^2\Big)=-2\sigma\frac{|\nabla g|^2}{g}(1+\log g).
    \end{align}
    We recall that $M_1, M_2$ are defined in Lemma \ref{omega}.
\end{lemma}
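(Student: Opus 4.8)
The plan is to exploit the one structural fact that makes both identities almost immediate: by the form of the limit equation \eqref{fp}, the density $g$ is itself a solution of the heat equation for the drift Laplacian $\Delta_f$. Indeed $\Delta_f g=\sigma\Delta g+\langle\nabla f,\nabla g\rangle=\sigma\Delta g-(K\ast\omega)\cdot\nabla g=\partial_t g$, so $(\partial_t-\Delta_f)g=0$, and hence the chain rule $(\partial_t-\Delta_f)\Phi(g)=\Phi'(g)(\partial_t-\Delta_f)g-\sigma\Phi''(g)|\nabla g|^2=-\sigma\Phi''(g)|\nabla g|^2$ holds for any smooth $\Phi$. For \eqref{gloggsquare} I would simply take $\Phi(s)=s(\log s)^2$, so that $\Phi''(s)=2(1+\log s)/s$, which gives $(\partial_t-\Delta_f)(g(\log g)^2)=-\frac{2\sigma}{g}|\nabla g|^2(1+\log g)$ at once; this is the exact analogue of \eqref{glogg} in Lemma \ref{cal} and needs nothing more.

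For \eqref{hessiang} the plan is the standard Bernstein/Bochner commutation computation carried to second order. First, differentiating $(\partial_t-\Delta_f)g=0$ twice in space and using the commutator $(\partial_t-\Delta_f)(\partial_k v)=\partial_k\big[(\partial_t-\Delta_f)v\big]+(\partial_k\nabla f)\cdot\nabla v$, the Hessian entries $a_{ij}=\partial_{ij}g$ satisfy
\[
(\partial_t-\Delta_f)a_{ij}=\sum_l(\partial_{ijl}f)\,\partial_l g+\sum_l(\partial_{il}f)\,a_{jl}+\sum_l(\partial_{jl}f)\,a_{il}.
\]
Next, setting $w=|\nabla^2 g|^2=\sum_{ij}a_{ij}^2$, I would combine $(\partial_t-\Delta_f)(a_{ij}^2)=2a_{ij}(\partial_t-\Delta_f)a_{ij}-2\sigma|\nabla a_{ij}|^2$ with the quotient identity $(\partial_t-\Delta_f)(w/g)=\frac{1}{g}(\partial_t-\Delta_f)w+\frac{2\sigma}{g^2}\nabla w\cdot\nabla g-\frac{2\sigma}{g^3}w|\nabla g|^2$ (which again uses $(\partial_t-\Delta_f)g=0$), and complete the square in the diffusion terms to obtain
\[
(\partial_t-\Delta_f)\Big(\frac{|\nabla^2 g|^2}{g}\Big)=-\frac{2\sigma}{g}\sum_{ij}\Big|\nabla a_{ij}-\frac{a_{ij}\nabla g}{g}\Big|^2+\frac{2}{g}\sum_{ijl}a_{ij}(\partial_{ijl}f)\,\partial_l g+\frac{4}{g}\sum_{ijl}a_{ij}(\partial_{il}f)\,a_{jl}.
\]
The first term is non-positive and is dropped. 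Recalling $\nabla f=-K\ast\omega$ and using Lemma \ref{omega} through $\|\nabla^2 f\|_\infty=\|\nabla(K\ast\omega)\|_\infty\lesssim(1+t)^{-1}$ and $\|\nabla^3 f\|_\infty=\|\nabla^2(K\ast\omega)\|_\infty\lesssim(1+t)^{-3/2}$, the Hessian-quadratic term is $\leq\frac{C}{1+t}\frac{|\nabla^2 g|^2}{g}$ and the third-order term is $\leq\frac{C}{(1+t)^{3/2}}\frac{|\nabla^2 g|\,|\nabla g|}{g}$; a Young splitting $\frac{|\nabla^2 g|\,|\nabla g|}{(1+t)^{3/2}}\leq\frac12\frac{|\nabla^2 g|^2}{1+t}+\frac12\frac{|\nabla g|^2}{(1+t)^2}$ then yields exactly the right-hand side of \eqref{hessiang}.

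No step here is a genuine obstacle: the argument is the algebraic bookkeeping of the second-order chain-rule and commutator identities — already done for the gradient in \cite[Section 6]{feng2023quantitative} — together with the decay rates of Lemma \ref{omega}. The two points that warrant a little care are that the third-order term carries precisely the weight $(1+t)^{-3/2}$, which is exactly what the Young splitting needs in order to reproduce both time weights in \eqref{hessiang} without loss, and the justification that all the derivatives of $g$ appearing exist and that the formal manipulations are legitimate — which follows from parabolic smoothing on $(0,T]$ and the higher Sobolev bounds of Lemma \ref{high}.
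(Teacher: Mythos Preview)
Your proposal is correct and is precisely the ``technical calculations'' the paper omits and delegates to \cite[Section 7]{feng2023quantitative}: the chain-rule identity $(\partial_t-\Delta_f)\Phi(g)=-\sigma\Phi''(g)|\nabla g|^2$ with $\Phi(s)=s(\log s)^2$ for \eqref{gloggsquare}, and the second-order Bochner/Bernstein commutation combined with the quotient rule, completion of squares, the decay rates of Lemma \ref{omega}, and a Young splitting for \eqref{hessiang}. There is no meaningful difference from the intended argument.
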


The proof of Lemma \ref{cal2} is also nothing but technical computations, which we also omit here and refer to \cite[Section 7]{feng2023quantitative} for details.

\textbf{Short Time Estimate:} We construct the first auxiliary function to be propagated:
\begin{equation*}
    F(t,x)=e^{-(4M_1+M_2)t}\frac{|\nabla^2 g|^2}{g}-C_5g(\log g)^2+C_6 g\log g-C_7 g
\end{equation*}
with some constants $C_5,C_6,C_7$ to be determined. From Lemma \ref{cal2} above, the propagation of the auxiliary function reads
\begin{align*}
    (\partial_t-\Delta_f)F \leq &\, -(4M_1+M_2)e^{-(4M_1+M_2)t}\frac{|\nabla^2 g|^2}{g}\\
    &\, +\frac{4M_1+M_2}{1+t}e^{-(4M_1+M_2)t}\frac{|\nabla^2 g|^2}{g}+\frac{M_2}{(1+t)^2}e^{-(4M_1+M_2)t}\frac{|\nabla g|^2}{g}\\
    &\, +2C_5\sigma\frac{|\nabla g|^2}{g}(1+\log g)-C_6\sigma\frac{|\nabla g|^2}{g}\\
    \leq &\, (M_2+2C_5\sigma+2C_5\sigma \log(1+M_0)-C_6\sigma)\frac{|\nabla g|^2}{g}.
\end{align*}
The initial value of $F$ simply reads
\begin{equation*}
    F(0,x)=\frac{|\nabla^2 g_0|^2}{g_0}-C_5g_0(\log g_0)^2+C_6g_0\log g_0-C_7g_0.
\end{equation*}
Hence, $F(0,\cdot) \leq 0$ is equivalent to
\begin{equation*}
    \frac{|\nabla^2 g_0|^2}{g_0^2}+C_6\log g_0 \leq C_7+C_5(\log g_0)^2.
\end{equation*}
We recall the initial assumptions:
\begin{equation*}
    |\nabla \log g_0(m,x)|^2,|\nabla^2 \log g_0(m,x)| \lesssim 1+|x|^2,\quad g_0(m,x) \leq C_0 \exp(-C_0^{-1}|x|^2),
\end{equation*}
which allow us to choose $C_5$ large enough to eliminate the spatial growth in $|\nabla^2 g_0|^2/g_0^2$, and then choose $C_6$ large enough to satisfy that $(\partial_t-\Delta_f)F \leq 0$, and finally choose $C_7$ large enough to conclude that $F(0,\cdot) \leq 0$. Now we only have to check the growth assumption \eqref{condi2}. Recall that $\nabla^2 g \in L^\infty$, $\nabla g \in L^\infty$ and $g \in L^\infty$ from Lemma \ref{high}, and that
\begin{equation*}
    g(x,t) \geq \frac{1}{C(1+t)}\exp\Big(-C\frac{|x|^2}{1+t}\Big)
\end{equation*}
from the Gaussian lower bound Lemma \ref{gaussianbelow}. Consequently, we check the growth assumption by
\begin{equation*}
    \int_0^T \int_{B_r} F_+^2(x,t) e^{f(x)} \ud V\ud t \leq CTe^{C(1+T)(1+r^2)} \int_{B_r} e^{f(x)} \ud x \leq CTr^2e^{C_Tr^2}
\end{equation*}
since $f$ is bounded.
Hence we may choose $\alpha(r)=C_Tr^2(1+|\log r|)$ as well which satisfies the Osgood type condition \eqref{condi3}. Applying the Grigor'yan maximum principle Theorem \ref{karp} we arrive at $F \leq 0$, or
\begin{equation}\label{F2}
    \frac{|\nabla^2 g|^2}{g^2} \leq e^{(4M_1+M_2)t}\Big( C_5 (\log g)^2-C_6\log g+C_7\Big).
\end{equation}

\textbf{Long Time Estimate:} In order to obtain the logarithmic Hessian estimate \eqref{hessian} with optimal time dependence, we also need the small trick motivated by \cite{hamilton1993matrix}. For any fixed time $\tau>0$, we define the time-rescaling function $\Tilde{g}$ as in the previous subsection. We also need to make use of the non-positive term which is abandoned in \eqref{nablagsquare} when discussing the logarithmic gradient estimate. Rewrite \eqref{nablagsquare} into
\begin{equation}\label{newnablasquare}
    (\partial_t-\Delta_f)\Big(\frac{|\nabla g|^2}{g}\Big) \leq -\sigma\frac{|\nabla^2 g|^2}{g}+2\sigma\frac{|\nabla g|^4}{g^3}+\frac{2M_1}{1+t}\frac{|\nabla g|^2}{g}.
\end{equation}
We construct our new auxiliary function to be propagated:
\begin{equation*}
    F(t,x)=\phi \frac{|\nabla^2 \Tilde{g}|^2}{\Tilde{g}}+\psi \frac{|\nabla \Tilde{g}|^2}{\Tilde{g}}-C_8\Tilde{g}(\log \Tilde{g})^2+C_9\Tilde{g} \log \Tilde{g} -C_{10}\Tilde{g},
\end{equation*}
for some $\phi(t)$ and $\psi(t)$ and constants $C_8,C_9,C_{10}$ to be determined. We deduce that the propagation of the new auxiliary function reads
\begin{align*}
    &\, (\partial_t-\Delta_f)F\\
    \leq &\, \Big(\phi^\prime+\frac{4M_1+M_2}{1+t}\phi-\sigma\psi\Big)\frac{|\nabla^2 \Tilde{g}|^2}{\Tilde{g}}\\
    +&\, \Big(\frac{M_2}{(1+t)^2}\phi+\psi^\prime+\frac{2M_1}{1+t}\psi+2\sigma\psi|\nabla \log \Tilde{g}|^2+2\sigma C_8\log \Tilde{g}+2\sigma C_8-\sigma C_9\Big)\frac{|\nabla \Tilde{g}|^2}{\Tilde{g}}.
\end{align*}
Recall the logarithmic gradient estimate 
\begin{equation*}
    \frac{\sigma t}{2M_1+1} |\nabla \log \Tilde{g}|^2+\log \Tilde{g} \leq C_4=\log 2M_0-\log \tau.
\end{equation*}
Now we pick the coefficient function
\begin{equation*}
    \psi(t)=\frac{t}{4M_1+2}
\end{equation*}
to satisfy that
\begin{equation*}
    2\sigma \psi|\nabla \log \Tilde{g}|^2 \leq \log 2M_0-\log \tau-\log \Tilde{g}.
\end{equation*}
We pick the other coefficient function
\begin{equation*}
    \phi(t)=\frac{\sigma t^2}{(4M_1+M_2+2)(4M_1+2)}
\end{equation*}
to cancel the first term on the right-hand side of the propagation of the auxiliary function $F$. Namely, we have
\begin{equation*}
    \phi^\prime+\frac{4M_1+M_2}{1+t}\phi-\sigma \psi\leq \frac{2\sigma t+(4M_1+M_2)\sigma t}{(4M_1+M_2+2)(4M_1+2)}-\frac{\sigma t}{4M_1+2}=0,
\end{equation*}
and we deduce that
\begin{align*}
    (\partial_t-\Delta_f)F \leq \Big(\frac{1}{2}+\frac{\sigma }{4M_1+2}+\log 2M_0-\log \tau+(2\sigma C_8-1)\log \Tilde{g}-\sigma C_9\Big) \frac{|\nabla \Tilde{g}|^2}{\Tilde{g}}.
\end{align*}
The initial value of $F$ simply reads
\begin{equation*}
    F(0,x)=-C_8 \Tilde{g}_0(\log \Tilde{g}_0)^2+C_9 \Tilde{g}_0 \log \Tilde{g}_0-C_{10} \Tilde{g}_0.
\end{equation*}
This allows us to choose $C_8=1/(2\sigma)$ to cancel the term involving $\log \Tilde{g}$, and then choose
\begin{equation*}
    C_9=\frac{1}{2\sigma}+\frac{1}{4M_1+2}+\frac{\log 2M_0}{\sigma}-\frac{\log \tau}{\sigma}=C_9^\prime-\frac{\log \tau}{\sigma}
\end{equation*}
to deduce that $(\partial_t-\Delta_f)F \leq 0$, and finally choose $C_{10}$ large enough to conclude that $F(0,\cdot) \leq 0$ as long as $\tau \geq 1$. The rest of the assumptions of Theorem \ref{karp} follow as in the short time case. Therefore, we arrive at $F \leq 0$, or
\begin{equation*}
    \frac{\sigma t^2}{(4M_1+M_2+2)(4M_1+2)} \frac{|\nabla^2 \Tilde{g}|^2}{\Tilde{g}^2} \leq \frac{(\log \Tilde{g})^2}{2\sigma}-C_9\log \Tilde{g}+C_{10}.
\end{equation*}
Take $t=\tau/2$ and return to the original function $g$:
\begin{equation}\label{t2}
    \frac{|\nabla^2 g|^2}{g}(\tau,m,x) \leq \frac{4(4M_1+M_2+2)(4M_1+2)}{\sigma \tau^2} \Big( \frac{(\log g)^2}{2\sigma}-(C_9^\prime-\log \tau)\log g+C_{10} \Big).
\end{equation}

Combining \eqref{F2} for $t \leq 1$ and \eqref{t2} for $t \geq 1$, we can choose some appropriate large constants $B_3,B_4$ to conclude the logarithmic Hessian estimate \eqref{hessian}.

\medskip 

\noindent {\bf Acknowledgments.} X. F. and Z. W. are partially  supported by the National Key R\&D Program of China, Project Numbers 2021YFA1002800 and 2024YFA1015500,  NSFC grant No.12171009, Young Elite Scientist Sponsorship Program by China Association for Science and Technology (CAST) No. YESS20200028 and the Fundamental Research Funds for the Central Universities (the start-up fund), Peking University.

\noindent {\bf Declaration.} The authors declare that they have no conflict of interest.

\bibliographystyle{abbrv}
\bibliography{ref}

\end{document}